 \theoremstyle{definition}
 \numberwithin{equation}{section}
\def\Dv{\mathrm{div}}
\def\Trucan{\Phi_R^{\mathbf{u}_n, Q_n}}
\def\Truca{\Phi_R^{\mathbf{u}, Q}}
\def\Trucaa{\Phi_R^{\mathbf{u}_1, Q_1}}
\def\Trucab{\Phi_R^{\mathbf{u}_2, Q_2}}
\def\XXint#1#2#3{{\setbox0=\hbox{$#1{#2#3}{\int}$ }
\vcenter{\hbox{$#2#3$ }}\kern-.6\wd0}}
\newtheorem{theorem}{Theorem}[section]
\newtheorem{lemma}[theorem]{Lemma}
\newtheorem{proposition}[theorem]{Proposition}
\theoremstyle{definition}
\newtheorem{definition}[theorem]{Definition}
\theoremstyle{remark}
\newtheorem{remark}[theorem]{Remark}
\author[Z. Qiu]{Zhaoyang Qiu}
\address{School of Mathematics and Statistics, Huazhong University of Science and Technology, Wuhan, 430074, China.}
\email{zhqmath@163.com}
\author[Y. Wang]{Yixuan Wang}
\address{Department of Mathematics, University of Pittsburgh, Pittsburgh, 15260, USA.}
\email{YIW119@pitt.edu}
\title[Strong solution for compressible liquid crystal system with random force]
{Strong solution for compressible liquid crystal system with random force}
\keywords{Stochastic compressible liquid crystal system, local strong pathwise solution, global martingale solution, stochastic compactness, uniqueness}
\subjclass[2000]{35Q35, 76N10, 76A15, 35R60}
\date{\today}
\begin{document}
\begin{abstract}
We study the  three-dimensional compressible Navier-Stokes equations coupled with the $Q$-tensor equation perturbed by a multiplicative stochastic force, which describes the motion of nematic liquid crystal flows. The local existence and uniqueness of strong pathwise solution up to a positive stopping time is established where ``strong" is in both PDE and probability sense. The proof relies on the Galerkin approximation scheme, stochastic compactness, identification of the limit, uniqueness and a cutting-off argument. In the stochastic setting, we  develop an extra layer approximation to overcome the difficulty arising from the stochastic integral while constructing the approximate solution. Due to the complex structure of the coupled system, the estimates of the high-order items are also the challenging part in the article.
\end{abstract}

\maketitle

\section{{\bf Introduction}}

 Liquid crystal is a kind of material whose mechanical properties and symmetry properties are intermediate between those of a liquid and those of a crystal. The complex structure of liquid crystals made it the ideal material for the study of topological defects. As a result, several mathematical models have been brought out to describe the dynamics of a liquid crystal. For example, in \cite{DGPG}, the Ericksen-Leslie-Parodi system has been used to model the flow of   liquid crystals,
based on  the fact that a nematic flow is very similar to a conventional liquid with molecules of similar size. The challenge is, the flow would disturb the alignment, thus a new flow in the nematic is induced. In order to analyze the coupling between orientation and flow, a macroscopic approach has been used, and a direction field $\mathbf{d}$ with unit length is adopted to describe the local state of alignment. 
However, the model is restricted to an uniaxial order parameter field of constant magnitude.

 In an effort to describe the motion of biaxial liquid crystals, a tensor order parameter $Q$ replacing the unit vector $\mathbf{d}$ was brought up in \cite{BAN, DGPG} 
 to describe the primary and secondary directions of nematic alignments along with variations in the degree of nematic order, which reflects better the properties of   nematic liquid crystals and can be modeled by the Navier-Stokes equations governing the fluid velocity coupled with a parabolic equation of $Q$-tensor;
 see \cite{Ball,Ball1,Maju} for further background discussions. The compressible case we focus on reads as
  \begin{equation}\label{q}
    \begin{cases} d\rho + \Dv_x(\rho \mathbf{u} ) dt = 0,\\
        d(\rho \mathbf{u}) + \Dv_x(\rho \mathbf{u}\otimes \mathbf{u}) dt+\nabla_x pdt\\
        \quad=\mathcal{L}\mathbf{u}dt-\Dv_x(L\nabla_x Q\odot \nabla_x Q-\mathcal{F}(Q){\rm I}_3)dt
        +\Dv_x(Q\mathcal{H}(Q)-\mathcal{H}(Q)Q)dt, \\
        dQ+\mathbf{u}\cdot\nabla_x Qdt-(\Theta Q-Q\Theta) dt=\Gamma\mathcal{H}(Q)dt,
    \end{cases}
    \end{equation}
where $\rho, \mathbf{u} $ denote the density, and the flow velocity,  respectively; $p(\rho)=A\rho^\gamma$ stands for the pressure with the adiabatic exponent $\gamma>1$, $A>0$ is the squared reciprocal of the Mach number. The nematic tensor order parameter $Q$ is a traceless and $3\times 3$ symmetric matrix. Furthermore, $\mathcal{L}$ stands for the Lam\'e operator
  $$\mathcal{L}\mathbf{u}=\upsilon\triangle \mathbf{u}+(\upsilon+\lambda)\nabla\Dv_x\mathbf{u},$$
  where $\upsilon>0, \lambda\geq 0$ are shear viscosity and bulk viscosity coefficient of the fluid, respectively.
The term $\nabla_x Q\odot \nabla_x Q$ stands for the $3\times 3$ matrix with its $(i,j)$-th entry  defined by
  $$(\nabla_x Q\odot \nabla_x Q)_{ij}=\sum_{k,l=1}^3\partial_iQ_{kl}\partial_jQ_{kl},$$
 and ${\rm I}_3$ stands for the $3\times 3$ identity matrix. Define the free energy density of the director field $\mathcal{F}(Q)$
  $$\mathcal{F}(Q)=\frac{L}{2}|\nabla_x Q|^2+\frac{a}{2}{\rm tr}(Q^2)-\frac{b}{3}{\rm tr}(Q^3)+\frac{c}{4}{\rm tr}^2(Q^2),$$
and denote
  $$\Gamma\mathcal{H}(Q)=\Gamma L\triangle Q+\Gamma\left(-aQ+b\left[Q^2-\frac{{\rm I}_3}{3}{\rm tr}(Q^2)\right]-cQ{\rm tr}(Q^2)\right)=:\Gamma L\triangle Q+\mathcal{K}(Q).$$
 The coefficients in the formula are elastic constants: $L>0$, $\Gamma>0$, $a\in\mathbb{R}$, $b>0$ and $c>0$, which are dependent on the material.
 Finally $\Theta=\frac{\nabla_x\mathbf{u}-\nabla_x\mathbf{u}^\top} 2$ 
 is the skew-symmetric part of the rate of strain tensor.  
 From the specific form $\mathcal{K}(Q)$, we remark that
 $$Q\mathcal{H}(Q)-\mathcal{H}(Q)Q= L(Q\triangle Q-\triangle QQ).$$

The PDEs perturbed randomly are considered as a primary tool in the modeling of uncertainty, especially while describing fundamental phenomenon in physics, climate dynamics, communication systems, nanocomposites and gene regulation systems. Hence, the study of the well-posedness and dynamical behaviour of PDEs subject  to the noise which is largely applied to the theoretical and practical areas has drawn a lot of attention. Here, we consider the system (\ref{q}) driven by a multiplicative noise:   
    \begin{equation}\label{qn}
    \begin{cases} d\rho + \Dv_x(\rho \mathbf{u} ) dt = 0,\\
        d(\rho \mathbf{u}) + \Dv_x(\rho \mathbf{u}\otimes \mathbf{u}) dt+A\nabla_x \rho^\gamma dt\\
        \quad=\mathcal{L}\mathbf{u}dt-\Dv_x(L\nabla_x Q\odot \nabla_x Q-\mathcal{F}(Q){\rm I}_3)dt
        +L\Dv_x(Q\triangle Q-\triangle QQ)dt\\ \quad\quad+\mathbb{G}(\rho,\rho \mathbf{u})dW, \\
        dQ+\mathbf{u}\cdot\nabla_x Qdt-(\Theta Q-Q\Theta) dt=\Gamma\mathcal{H}(Q)dt,
    \end{cases}
    \end{equation}
where $W$ is a cylindrical Wiener process which will be introduced later. The system is equipped with the initial data
 \begin{equation}\label{ic}
      \rho(0,x)=\rho_0(x),~~\mathbf{u}(0,x)=\mathbf{u}_0(x),~~Q(0,x)=Q_0(x),
    \end{equation}
and the periodic boundary, where each period is a cube $\mathbb{T}\subset \mathbb{R}^3$ defined as follows
\begin{align}\label{1.4}
 \mathbb{T}=(-\pi,\pi)|_{\{-\pi,\pi\}^3}.
\end{align}

Regarding the incompressible $Q$-tensor liquid crystal framework, Paicu-Zarnescu \cite{PMZAE} have proved the existence of a global weak solution in both 2D and 3D cases,  and also proved the global regularity   and the weak-strong uniqueness in 2D. Then, Paicu-Zarnescu continued his study and got the same results in \cite{PMZAG} for the full system. De Anna \cite{DAF} extended the result \cite{PMZAE} to the low regularity space $W^s$ for $0<s<1$, which filled the gap in \cite{PMZAE}. Wilkinson \cite{WMS} obtained the existence and the regularity property for weak solution in the general d-dimensional case in the presence of a singular potential. The existence of a global in time weak solution for system with thermal effects is proved in \cite{FERS}. The existence and uniqueness of global strong solution for the density-dependent system is established by Li-Wang in \cite{LXWD}. For the compressible model, there are less results due to its complexity. In \cite{WD}, Wang-Xu-Yu established the existence as well as long time dynamics of global weak solutions. In fact, there are more results on the hydrodynamic system for  the three-dimensional flow of nematic liquid crystals. For example, Jiang-Jiang-Wang \cite{JFJS} has proved the existence of a global weak solution to a two-dimensional simplified Ericksen-Leslie system of compressible flow of nematic liquid crystals, and the existence of a weak solution in a bounded domain for both 2D and 3D can be seen in \cite{JFJSW} and \cite{WDYC}. For more researches related to the topic, check \cite{chen, CG} and the reference within.

If we just consider the first two equations in system \eqref{qn}, and make the stochastic forcing term $\mathbb{G}\equiv 0$ in equation \eqref{qn}(2), then the equations would degenerate to the system of compressible Navier-Stokes equations. There have been tremendous studies about the existence of the solution in both deterministic and stochastic cases. For the deterministic case, the pioneering work has been done by Lions in \cite{LP}, the existence of a global weak solution has been proven for adiabatic constant $\gamma>\frac{9}{5}$ by introducing the re-normalized solution to deal with the difficulty in large oscillations. Then, \cite{FENP} extended the result to adiabatic exponent $\gamma>\frac{3}{2}$, which by now is the result that allows the maximum range of $\gamma$. For more results, we  refer the reader to \cite{HD,HXLJ,JS,MAYS,MAVA} and the reference within. For the stochastic case, the existence result of global weak martingale solution was built in \cite {Hofmanova,16, DWang}. In addition, see \cite{17} for the construction of weak martingale solution to non-isentropic, compressible case. Moreover, Breit-Feireisl-Hofmanov\'{a} \cite{breit} proved the existence and uniqueness of local strong pathwise solution to compressible Navier-Stokes equations.

For the stochastic liquid crystal hydrodynamics system, we refer the readers to \cite{brze1,brze2,brze3,wu} for the well-posedness result of incompressible case,
and  Qiu-Wang \cite{QW} obtained the global existence of weak martingale solution to the compressible active liquid crystal system, we remark that the result of this paper could be extended to the active system.

In this paper, we are going to prove the existence and uniqueness of strong pathwise solution to the stochastic system (\ref{qn}), where the ``strong" means the strong existence in both PDE and probability sense. That is, the solution has sufficient space regularity and satisfies the system in the pointwise sense when the probability space is given. Here we mention that even if in the deterministic case, there is no related result for the existence and uniqueness result of strong solution for which the state space lies in $(H^s)^2\times H^{s+1}$ for integer $s>\frac{9}{2}$. We would introduce the symmetric system considering the energy estimate of the strong solution to compressible fluid. Therefore, for the convenience of the symmetrization, we require the density $\rho>0$, which means the vacuum state shall not appear.

To prove the existence of the strong solution, we need to build some compactness result for the approximate solution. In the stochastic case, although we know that  the embedding $X\hookrightarrow Y$ is compact, it is still hard to tell if the embedding $L^p(\Omega; X)\hookrightarrow L^p(\Omega; Y)$ is compact or not. Therefore, we can no longer apply the classical criterion like Aubin-Lions lemma or Arzela-Ascoli theorem directly as in the deterministic case. Invoked by the Yamada-Watanabe argument, first we apply the classical Skorokhod representation theorem to establish a strong martingale solution, then by proving the pathwise uniqueness, we could reveal that the solution is also strong in probability sense.

During the high-order energy estimate of the approximate solution, we would apply Moser-type estimate and get the form  $(\|\mathbf{u}\|_{2,\infty}+\|Q\|_{3,\infty})\cdot (\|\mathbf{u}\|_{s,2}^2+\|Q\|_{s+1,2}^2)$ and $(\|\mathbf{u}\|_{2,\infty}+\|\rho\|_{1,\infty})\cdot \|\rho, \mathbf{u}\|_{s,2}^2$, making it difficult to get the estimate. Inspired by \cite{Kim1}, we could deal with the nonlinear terms by adding a cut-off function. We could get that $\|\rho\|_{1,\infty}$ would be bounded if $\|\rho_0\|_{1,\infty}, \|\mathbf{u}\|_{1,\infty} $ are bounded, then the cut-off function only depends on $\|\mathbf{u}\|_{2,\infty}$ and $\|Q\|_{3,\infty}$ under the assumption that $\|\rho_0\|_{1,\infty}$ is bounded. The benefit is, while building Galerkin approximation system, for every fixed $\mathbf{u}$ we could first solve the mass equation directly which actually is a linear transport equation and solve the ``parabolic-type" $Q$-tensor equation. In turn, we obtain the existence of approximate solution $\mathbf{u}$ in a finite dimensional space. Different from the deterministic case, we will develop a new extra layer approximation to deal with the difficulty arising from the stochastic integral, constructing the Galerkin approximate solution with the spirit of \cite{Hofmanova}. Also, the cut-off function brings downside in proving the uniqueness. We have to restrict our regularity index to integer $s>\frac{9}{2}$ comparing with the martingale solution result which only requires $s>\frac{7}{2}$.

 Note that during the uniform energy estimate and the uniqueness argument, the most challenging term to deal with is the one with the highest order, $\Dv_x(Q\triangle Q-\triangle Q Q)$ in the momentum equation, it is hard to control it directly. Luckily, we are able to cancel this term using $\Theta Q-Q\Theta$ after integration by parts as well as some transformation, the detailed operation can be seen in Lemma \ref{lem2.4} where an artificial scalar function $f(r)$ is added for matching the momentum equation. However, the scalar function $f(r)$ brings extra difficulties in the a priori estimate which needs to be handled technically.

The rest of paper is organized as follows. Section 2 would offer the deterministic and stochastic preliminaries associated with system (\ref{qn}) and the main result. We will transfer system (\ref{qn}) into a symmetric system in Section 3. In Section 4 and Section 5 we will establish the existence of global strong martingale solution and strong pathwise solution to the symmetric system. Finally, in Section 6, we prove the main theorem by applying a cutting-off argument, so that the initial data could be more generalized. Last, we include an Appendix stating the results used frequently in this paper.

\section{{\bf Preliminary and Main Result}}

First, we present some deterministic as well as stochastic preliminaries associated with system (\ref{qn}). For each integer $s\in \mathbb{N}^{+}$, denote $W^{s,2}(\mathbb{T})$ as the Sobolev space containing all the functions having distributional derivatives up to order $s$, and the derivatives are integrable in $L^{2}(\mathbb{T})$, endowed with the norm
\begin{eqnarray*}
\|u\|_{W^{s,2}}^{2}=\sum_{k\in \mathbb{Z}^3}(1+k^{2})^{s}|\hat{u}_k|^{2},
\end{eqnarray*}
where $\hat{u}_k$ is the Fourier coefficients of $u$. $W^{s,2}(\mathbb{T})$ is an Hilbert space, and for any $u$, $v\in W^{s,2}$, the inner product can be denoted as
$$(u,v)_{s,2}=\sum_{|\alpha|\leq s}\int_{\mathbb{T}}\partial_x^{\alpha}u\cdot\partial_x^{\alpha}vdx.$$
For simplicity, we denote the notations $\|\cdot\|$ as the $L^2$-norm, $\|\cdot\|_{\infty}$ as the $L^\infty$-norm, and$\|\cdot\|_{s,p}$ as the $W^{s,p}$-norm for all $1\leq s<\infty, 1\leq p\leq \infty$.

Define the inner product between two $3\times 3$ matrices $\mathrm{M}_1$ and $\mathrm{M}_2$
\begin{eqnarray*}
(\mathrm{M}_1, \mathrm{M}_2)=\int_{\mathbb{T}}\mathrm{M}_1:\mathrm{M}_2 dx=\int_{\mathbb{T}}{\rm tr}(\mathrm{M}_1\mathrm{M}_2)dx,
\end{eqnarray*}
and $S_0^3\subset \mathbb{M}^{3\times 3}$ the space of $Q$-tensor
\begin{eqnarray*}
S_0^3=\left\{Q\in \mathbb{M}^{3\times 3}:~Q_{ij}=Q_{ji},~ {\rm tr}(Q)=0, ~i,j=1,2,3\right\},
\end{eqnarray*}
and the norm of a matrix using the Frobenius norm
\begin{eqnarray*}
|Q|^2={\rm tr}(Q^2)=\sum\limits_{i,j=1}^{3}Q_{ij}Q_{ij}.
\end{eqnarray*}
Set $|\partial_x^\alpha Q|^2=\sum\limits_{i,j=1}^{3}\partial_{x}^\alpha Q_{ij}\partial_{x}^\alpha Q_{ij}$. The Sobolev space of $Q$-tensor is defined by
\begin{eqnarray*}
W^{s,2}(\mathbb{T};S_0^3)=\left\{Q: \mathbb{T}\rightarrow S_0^3, ~{\rm and}~ \sum_{|\alpha|\leq s}\|\partial_x^\alpha Q\|^2<\infty\right\},
\end{eqnarray*}
endowed with the norm
$$\|Q\|_{W^{s,2}(\mathbb{T};S_0^3)}^2:=\|Q\|_{s,2}^2=\sum_{|\alpha|\leq s}\|\partial_x^\alpha Q\|^2.$$

To deal with the estimate of the nonlinear terms in the equations, we present the following lemmas that involves commutator and Moser estimates. The proof of these lemmas can be found in \cite{Kato2,MajdaA}.
\begin{lemma}\label{lem2.1}
For $u,v\in W^{s,2}(\mathbb{T})$, $s>\frac{d}{2}+1$, $d=2,3$ is the dimension of space, it holds
\begin{equation}\label{2.2}
\sum\limits_{0\leq|\alpha|\leq s}\|\partial_x^{\alpha}(u\cdot \nabla_x) v-u\cdot \nabla_x \partial_x^{\alpha}v\|\leq C(\|\nabla_x u\|_{\infty}\|v\|_{s,2}+\|\nabla_x v\|_{\infty}\|u\|_{s,2}),
\end{equation}
and
 \begin{equation}\label{2.3}
\|uv\|_{s,2}\leq C(\|u\|_{\infty}\|v\|_{s,2}+\|v\|_{\infty}\|u\|_{s,2}),
\end{equation}
for some positive constant $C=C(s,\mathbb{T})$ independent of $u$ and $v$.
\end{lemma}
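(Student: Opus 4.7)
Both inequalities are classical Kato--Ponce / Moser type commutator and product estimates; I sketch the standard argument. The key tools are Leibniz's rule, H\"older's inequality, and the Gagliardo--Nirenberg interpolation
\[
\|\partial_x^{\beta} f\|_{L^{p}(\mathbb{T})}\le C\,\|f\|_{L^{\infty}(\mathbb{T})}^{1-|\beta|/s}\,\|f\|_{s,2}^{|\beta|/s},\qquad \frac{1}{p}=\frac{|\beta|}{s}\cdot\frac{1}{2},
\]
valid on the torus for $0\le|\beta|\le s$ and an analogous version with $\|\nabla f\|_{\infty}$ replacing $\|f\|_{\infty}$ on the $(s-1)$--scale when $|\beta|\ge 1$.

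For the product estimate \eqref{2.3}, I would expand $\partial_x^{\alpha}(uv)=\sum_{\beta\le\alpha}\binom{\alpha}{\beta}\partial_x^{\beta}u\,\partial_x^{\alpha-\beta}v$ for $|\alpha|\le s$ and bound a typical term $\|\partial_x^{\beta}u\,\partial_x^{\alpha-\beta}v\|_{L^2}$ by H\"older with exponents $p=2s/|\beta|$ and $q=2s/(s-|\beta|)$ (the endpoints $|\beta|=0$ or $s$ being treated directly with $L^{\infty}\cdot L^{2}$). Applying the interpolation above to each factor gives
\[
\|\partial_x^{\beta}u\|_{L^{p}}\|\partial_x^{\alpha-\beta}v\|_{L^{q}}\le C\|u\|_{\infty}^{1-|\beta|/s}\|u\|_{s,2}^{|\beta|/s}\|v\|_{\infty}^{|\beta|/s}\|v\|_{s,2}^{1-|\beta|/s},
\]
and Young's inequality $ab\le \frac{|\beta|}{s}a^{s/|\beta|}+\frac{s-|\beta|}{s}b^{s/(s-|\beta|)}$ collapses this into the desired form $\|u\|_{\infty}\|v\|_{s,2}+\|v\|_{\infty}\|u\|_{s,2}$.

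For the commutator estimate \eqref{2.2}, the Leibniz expansion gives
\[
\partial_x^{\alpha}(u\cdot\nabla_x v)-u\cdot\nabla_x\partial_x^{\alpha}v=\sum_{0<\beta\le\alpha}\binom{\alpha}{\beta}\partial_x^{\beta}u\cdot\partial_x^{\alpha-\beta}\nabla_x v,
\]
so the top-order term $\partial_x^{\alpha}u\cdot\nabla_x v$ occurs only paired with one undifferentiated derivative of $v$, while the remaining terms have at most $s-1$ derivatives on $u$ beyond the gradient. Writing $\partial_x^{\beta}u=\partial_x^{\beta-e_i}(\partial_{x_i}u)$ for a suitable unit multi-index $e_i\le\beta$, I can apply the product estimate just established to the pair $(\nabla_x u,\nabla_x v)$ at total order $s-1$, which yields
\[
\Big\|\sum_{|\beta|\ge 1}\partial_x^{\beta}u\cdot\partial_x^{\alpha-\beta}\nabla_x v\Big\|\le C\bigl(\|\nabla_x u\|_{\infty}\|\nabla_x v\|_{s-1,2}+\|\nabla_x v\|_{\infty}\|\nabla_x u\|_{s-1,2}\bigr),
\]
which is controlled by the right-hand side of \eqref{2.2}. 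Summing over $|\alpha|\le s$ finishes the proof.

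The main obstacle is purely bookkeeping: one must carefully track the endpoint cases $|\beta|=0,|\alpha|$ in the Leibniz expansion (where interpolation degenerates and the $L^{\infty}$ factor appears directly) and verify that the interpolation exponents add to the required $L^{2}$ scaling. Since these arguments are entirely standard on the torus, the complete details may be found in \cite{Kato2,MajdaA}.
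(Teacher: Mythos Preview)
Your sketch is correct and follows the standard Moser--Kato--Ponce argument via Leibniz expansion, Gagliardo--Nirenberg interpolation, and Young's inequality. The paper itself does not prove this lemma but simply cites \cite{Kato2,MajdaA}, so your proposal in fact goes further than the paper while remaining fully consistent with the references it invokes.
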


\begin{lemma}\label{lem2.2} Let $f$ be a $s$-order continuously differentiable function on the neighborhood of compact set $G={\rm range}[u]$ and $u\in W^{s,2}(\mathbb{T})\cap C(\mathbb{T})$, it holds
\begin{eqnarray*}
\|\partial_x^\alpha f(u)\|\leq C\|\partial_u f\|_{C^{s-1}(G)}\|u\|_{\infty}^{|\alpha|-1}\|\partial_x^\alpha u\|,
\end{eqnarray*}
for all $\alpha\in \mathbb{N}^N, 1<|\alpha|\leq s$.
\end{lemma}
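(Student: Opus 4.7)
The plan is to prove this classical Moser-type estimate by combining the multivariate chain rule (Fa\`a di Bruno's formula) with Gagliardo--Nirenberg interpolation inequalities on the torus $\mathbb{T}$.

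First I would expand $\partial_x^\alpha f(u)$ using Fa\`a di Bruno: for any multi-index $\alpha$ with $1 < |\alpha| \leq s$, we have
\begin{equation*}
\partial_x^\alpha f(u) = \sum_{k=1}^{|\alpha|} f^{(k)}(u) \sum_{\substack{\beta_1+\cdots+\beta_k=\alpha \\ |\beta_i|\geq 1}} C_{\beta_1,\ldots,\beta_k}\, \partial_x^{\beta_1} u \cdots \partial_x^{\beta_k} u,
\end{equation*}
so that the $L^2$-norm is bounded by a finite sum of terms of the form $\|f^{(k)}(u)\|_\infty \cdot \bigl\|\prod_{i=1}^k \partial_x^{\beta_i} u\bigr\|$. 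Since $u \in C(\mathbb{T})$ with range in the compact set $G$ and $f \in C^s$ near $G$, each factor $\|f^{(k)}(u)\|_\infty$ is bounded by $\|\partial_u f\|_{C^{s-1}(G)}$.

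Next I would estimate each product of derivatives using H\"older's inequality with exponents $p_i = 2|\alpha|/|\beta_i|$ (which sum reciprocally to $1/2$ since $\sum|\beta_i|=|\alpha|$):
\begin{equation*}
\Bigl\|\prod_{i=1}^k \partial_x^{\beta_i} u\Bigr\| \leq \prod_{i=1}^k \|\partial_x^{\beta_i} u\|_{L^{p_i}}.
\end{equation*}
Each factor is then controlled by the Gagliardo--Nirenberg interpolation
\begin{equation*}
\|\partial_x^{\beta_i} u\|_{L^{p_i}} \leq C \|u\|_\infty^{1-|\beta_i|/|\alpha|} \|\partial_x^\alpha u\|^{|\beta_i|/|\alpha|},
\end{equation*}
and multiplying these bounds the exponents in the product collapse to
\begin{equation*}
\prod_{i=1}^k \|\partial_x^{\beta_i} u\|_{L^{p_i}} \leq C \|u\|_\infty^{k-1} \|\partial_x^\alpha u\|,
\end{equation*}
since $\sum_i(1-|\beta_i|/|\alpha|) = k-1$ and $\sum_i |\beta_i|/|\alpha| = 1$. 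Summing over all decompositions yields the desired bound, noting that $1\leq k\leq |\alpha|$ so the worst case $k=|\alpha|$ produces exactly the power $\|u\|_\infty^{|\alpha|-1}$, and lower values of $k$ contribute $\|u\|_\infty^{k-1}$, which can be absorbed into the constant (since $\|u\|_\infty$ is bounded in terms of the fixed compact range $G$).

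The main technical obstacle is the careful bookkeeping in the Fa\`a di Bruno expansion and ensuring that the Gagliardo--Nirenberg exponents add up correctly; the key algebraic identity is that the Gagliardo--Nirenberg interpolation exponents $|\beta_i|/|\alpha|$ precisely sum to $1$, causing a telescoping that yields a single power of $\|\partial_x^\alpha u\|$ rather than a more complicated product. A secondary subtlety is unifying the $\|u\|_\infty^{k-1}$ factors (arising from different values of $k$) into the single power $\|u\|_\infty^{|\alpha|-1}$ in the stated form, which is handled by allowing the constant $C$ to depend on $G$.
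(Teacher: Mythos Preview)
The paper does not give its own proof of this lemma; it simply cites the references \cite{Kato2,MajdaA} (Kato--Ponce and Majda's monograph) where this classical Moser-type calculus inequality is established. Your Fa\`a di Bruno expansion combined with H\"older and Gagliardo--Nirenberg interpolation is exactly the standard argument found in those references, so your approach matches what the paper defers to.

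One minor remark: you correctly flag the bookkeeping issue that the $k=1$ term in the Fa\`a di Bruno sum contributes $\|u\|_\infty^{0}=1$ rather than $\|u\|_\infty^{|\alpha|-1}$, and your fix (absorbing the discrepancy into the $G$-dependent constant) is the usual convention; note however that this genuinely requires either $\|u\|_\infty$ to be bounded below on $G$ or reading the stated inequality with $(1+\|u\|_\infty)^{|\alpha|-1}$ in place of $\|u\|_\infty^{|\alpha|-1}$. In the paper's applications (e.g.\ to $D(r)$ with $r$ bounded away from zero by \eqref{2.1}) this distinction is immaterial, which is why the lemma is quoted in this slightly loose form.
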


The following result is crucial to handle the highest-order derivative terms in the momentum and $Q$-tensor equations.
\begin{lemma}\label{lem2.4}
  Assume that $Q$ and $Q'$ are two $3\times 3$ symmetric matrices, and $\Theta=\frac{1}{2}(\nabla_x\mathbf{u}-\nabla_x\mathbf{u}^{{\rm T}})$, as $\nabla_x\mathbf{u}$ is also a $3\times 3$ matrix, and $(\nabla_x\mathbf{u})_{ij}=\partial_i u_j$, $f(r)$ is a scalar function. Then
  $$(f(r)(\Theta Q'-Q'\Theta),\triangle Q)+(f(r)(Q'\triangle Q-\triangle Q Q'),\nabla_x\mathbf{u}^{{\rm T}})=0.$$
\end{lemma}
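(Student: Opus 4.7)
The identity is essentially pointwise: since $f(r)$ is a scalar multiplier that factors out of the matrix algebra, it suffices to show that for every $x\in\mathbb{T}$,
$$\operatorname{tr}\bigl((\Theta Q'-Q'\Theta)\triangle Q\bigr)+\operatorname{tr}\bigl((Q'\triangle Q-\triangle Q\,Q')(\nabla_x\mathbf{u})^{{\rm T}}\bigr)=0,$$
and then multiply by $f(r)$ and integrate over $\mathbb{T}$. So no integration by parts is needed: the proof is a direct algebraic manipulation of traces.

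My plan is as follows. First I would set $A:=Q'\triangle Q-\triangle Q\,Q'$ and observe that $A$ is antisymmetric, because $Q'$ and $\triangle Q$ are both symmetric: $A^{{\rm T}}=(\triangle Q)^{{\rm T}}(Q')^{{\rm T}}-(Q')^{{\rm T}}(\triangle Q)^{{\rm T}}=\triangle Q\,Q'-Q'\triangle Q=-A$. Next, using the cyclic invariance of the trace, the first integrand rewrites as
$$\operatorname{tr}(\Theta Q'\triangle Q)-\operatorname{tr}(Q'\Theta\triangle Q)=\operatorname{tr}(\Theta Q'\triangle Q)-\operatorname{tr}(\Theta\triangle Q\,Q')=\operatorname{tr}(\Theta A).$$

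For the second integrand, I would decompose $\nabla_x\mathbf{u}=\Theta+D$ where $D=\tfrac12(\nabla_x\mathbf{u}+\nabla_x\mathbf{u}^{{\rm T}})$ is the symmetric part, so $(\nabla_x\mathbf{u})^{{\rm T}}=D-\Theta$. This gives
$$\operatorname{tr}(A(\nabla_x\mathbf{u})^{{\rm T}})=\operatorname{tr}(AD)-\operatorname{tr}(A\Theta).$$
The key cancellation comes from the fact that the contraction of a symmetric matrix and an antisymmetric matrix vanishes: $\operatorname{tr}(AD)=\operatorname{tr}((AD)^{{\rm T}})=\operatorname{tr}(D^{{\rm T}}A^{{\rm T}})=-\operatorname{tr}(DA)=-\operatorname{tr}(AD)$, hence $\operatorname{tr}(AD)=0$. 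Summing the two integrands yields $\operatorname{tr}(\Theta A)-\operatorname{tr}(A\Theta)=0$ by cyclicity, proving the pointwise identity.

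There is no real obstacle here: the lemma is a clean algebraic cancellation, and the only subtle points are (i) the antisymmetry of $A$ (which requires symmetry of both $Q$ and $Q'$, explicitly assumed in the hypothesis), and (ii) decomposing $\nabla_x\mathbf{u}^{{\rm T}}$ into its symmetric and antisymmetric parts to isolate the $\Theta$ contribution. The role of $f(r)$ is purely passive. The importance of the lemma for the overall energy argument is conceptual, not technical: it shows that the highest-order coupling term $\operatorname{div}(Q\triangle Q-\triangle Q\,Q')$ in the momentum equation and the rotational term $\Theta Q'-Q'\Theta$ in the $Q$-tensor equation balance exactly when tested against $\nabla_x\mathbf{u}^{{\rm T}}$ and $\triangle Q$ with a common scalar weight $f(r)$, which is what ultimately allows the a priori estimates to close.
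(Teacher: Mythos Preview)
Your proof is correct and takes essentially the same approach as the paper: both arguments rewrite the first term as $\operatorname{tr}(\Theta A)$ with $A=Q'\triangle Q-\triangle Q\,Q'$ antisymmetric, and then exploit that the pairing of $A$ with the symmetric part of $\nabla_x\mathbf{u}$ vanishes. The only cosmetic difference is that the paper combines the two terms first to get $(A,\Theta+\nabla_x\mathbf{u}^{{\rm T}})$ and observes directly that $\Theta+\nabla_x\mathbf{u}^{{\rm T}}=\tfrac12(\nabla_x\mathbf{u}+\nabla_x\mathbf{u}^{{\rm T}})$ is symmetric, whereas you decompose $\nabla_x\mathbf{u}^{{\rm T}}=D-\Theta$ explicitly; these are the same computation.
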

\begin{proof} In a similar way to \cite[Lemma A.1]{CG}, using the fact that ${\rm tr}(\mathrm{M}_1\mathrm{M}_2)={\rm tr}(\mathrm{M}_1\mathrm{M}_2)$ and $Q',Q, \Theta+\nabla_x\mathbf{u}^{{\rm T}}$ are symmetric, $f(r)$ is scalar function, we get
\begin{eqnarray*}
&&\quad(f(r)(\Theta Q'-Q'\Theta),\triangle Q)+(f(r)(Q'\triangle Q-\triangle Q Q'),\nabla_x\mathbf{u}^{{\rm T}})\\
&&=(f(r)(Q'\triangle Q-\triangle Q Q'),\Theta)+(f(r)(Q'\triangle Q-\triangle Q Q'),\nabla_x\mathbf{u}^{{\rm T}})\\
&&=(f(r)(Q'\triangle Q-\triangle Q Q'),\Theta+\nabla_x\mathbf{u}^{{\rm T}})=0,
\end{eqnarray*}
we finish the proof.
\end{proof}

Next, we introduce the following fractional-order Sobolev space with respect to time $t$, since noise term is only H\"{o}lder's continuous of order strictly less than $\frac{1}{2}$ in time.

For any fixed $p>1$ and $\alpha\in(0,1)$ we define
\begin{equation*}
W^{\alpha,p}(0,T;X)=\left\{v\in L^{p}(0,T;X):\int_{0}^{T}\int_{0}^{T}\frac{\|v(t_{1})-v(t_{2})\|_{X}^{p}}{|t_{1}-t_{2}|^{1+\alpha p}}dt_{1}dt_{2}<\infty\right\},
\end{equation*}
endowed with the norm
\begin{equation*}
\|v\|^p_{W^{\alpha,p}(0,T;X)}:=\int_{0}^{T}\|v(t)\|_{X}^{p}dt+\int_{0}^{T}\int_{0}^{T}\frac{\|v(t_{1})-v(t_{2})\|_{X}^{p}}{|t_{1}-t_{2}|^{1+\alpha p}}dt_{1}dt_{2},
\end{equation*}
for any separable Hilbert space $X$. If we take $\alpha=1$, then
\begin{equation*}
W^{1,p}(0,T;X):=\left\{v\in L^{p}(0,T;X):\frac{dv}{dt}\in L^{p}(0,T;X)\right\},
\end{equation*}
we could see that the space returns to the classical Sobolev space endowed with the usual norm
\begin{equation*}
\|v\|_{W^{1,p}(0,T;X)}^{p}:=\int_{0}^{T}\|v(t)\|_{X}^{p}+\left\|\frac{dv}{dt}(t)\right\|_{X}^{p}dt.
\end{equation*}
Note that for $\alpha\in(0,1)$, $ W^{1,p}(0,T;X)$ is a subspace of $ W^{\alpha,p}(0,T;X)$.

For any $\alpha \leq \beta-\frac{1}{p}$, it holds
\begin{eqnarray}\label{2.31}
W^{\beta,p}(0,T; L^2(\mathbb{T}))\hookrightarrow C^\alpha ([0,T]; L^2(\mathbb{T})).
\end{eqnarray}

 Let $\mathcal{S}:=(\Omega,\mathcal{F},\{\mathcal{F}_{t}\}_{t\geq0},\mathbb{P}, W)$ be a fixed stochastic basis and $(\Omega,\mathcal{F},\mathbb{P})$ be a complete probability space. Let $W$ be a Wiener process defined on an Hilbert space $\mathfrak{U}$, which is adapted to the complete, right continuous filtration $\{\mathcal{F}_{t}\}_{t\geq 0}$. If $\{e_{k}\}_{k\geq 1}$ is a complete orthonormal basis of $\mathfrak{U}$, then $W$ can be written formally as the expansion $W(t,\omega)=\sum_{k\geq 1}e_{k}\beta_{k}(t,\omega)$ where $\{\beta_{k}\}_{k\geq 1}$ is a sequence of independent standard one-dimensional Brownian motions.

Define an auxiliary space $\mathfrak{U}_0\supset \mathfrak{U}$ by
\begin{eqnarray*}
\mathfrak{U}_0=\left\{v=\sum_{k\geq 1}\alpha_ke_k:\sum_{k\geq 1}\frac{\alpha_k^2}{k^2}<\infty\right\},
\end{eqnarray*}
with the norm $\|v\|^2_{\mathfrak{U}_0}=\sum_{k\geq 1}\frac{\alpha_k^2}{k^2}$. Note that the embedding of $ \mathfrak{U}\hookrightarrow \mathfrak{U}_0$ is Hilbert-Schmidt. We also have that $W\in C([0,\infty), \mathfrak{U}_0)$ almost surely, see \cite{Prato}.

Now considering another separable Hilbert space $X$ and let $L_{2}(\mathfrak{U},X)$ be the set of all Hilbert-Schmidt operators $S:\mathfrak{U}\rightarrow X$ with the norm
$\|S\|_{L_{2}(\mathfrak{U},X)}^2=\sum_{k\geq 1}\|Se_k\|_{X}^2$. For a predictable process $G\in L^{2}(\Omega;L^{2}_{loc}([0,\infty),L_{2}(\mathfrak{U},X)))$  by taking $G_{k}=Ge_{k}$, one can define the stochastic integral
\begin{equation*}
\mathcal{M}_{t}:=\int_{0}^{t}GdW=\sum_{k}\int_{0}^{t}
Ge_{k}d\beta_{k}=\sum_{k}\int_{0}^{t}G_{k}d\beta_{k},
\end{equation*}
which is an $X$-valued square integrable martingale, and the Burkholder-Davis-Gundy inequality holds
\begin{equation}\label{2.4}
\mathbb{E}\left(\sup_{0\leq t\leq T}\left\|\int_{0}^{t}GdW\right\|_{X}^{p}\right)\leq c_{p}\mathbb{E}\left(\int_{0}^{T}\|G\|_{L_{2}(\mathfrak{U},X)}^{2}dt\right)^{\frac{p}{2}},
\end{equation}
for any $1\leq p<\infty$, for more details see \cite{Prato}. The notation $\mathbb{E}$ represents the expectation.

We shall present the main result of this paper. First, we define local strong pathwise solution. For this type of solution, "strong" means in PDE and probability sense, "local" means existence in finite time.
\begin{definition}\label{de1}
    (Local strong pathwise solution). Let $(\Omega,\mathcal{F},\{\mathcal{F}_t\}_{t\geq 0},\mathbb{P})$ be a fixed probability space, W be an $\mathcal{F}_t$-cylindrical Wiener process. Then $(\rho,\mathbf{u},Q,\mathfrak{t})$ is a local strong pathwise solution to system \eqref{qn} if the following conditions hold
    \begin{enumerate}
    \item $\mathfrak{t}$ is a strictly positive a.s. $\mathcal{F}_t$-stopping time;

    \item $\rho$, $\mathbf{u}$, $Q$ are $\mathcal{F}_t$-progressively measurable processes, satisfying ~~$\mathbb{P}$ \mbox{a.s.}
    \begin{align*}
    &\qquad\rho(\cdot\wedge\mathfrak{t})>0,~\rho(\cdot\wedge\mathfrak{t})\in C([0,T];W^{s,2}(\mathbb{T})),\\
    &\qquad\mathbf{u}(\cdot\wedge\mathfrak{t})\in L^\infty(0,T;W^{s,2}(\mathbb{T}, \mathbb{R}^3))\cap L^2(0,T;W^{s+1,2}(\mathbb{T},\mathbb{R}^3))\cap C([0,T];W^{s-1,2}(\mathbb{T}, \mathbb{R}^3)),\\
    &\qquad Q(\cdot\wedge\mathfrak{t})\in L^\infty(0,T;W^{s+1,2}(\mathbb{T},S_0^3))\cap L^2(0,T;W^{s+2,2}(\mathbb{T},S_0^3))\cap C([0,T];W^{s,2}(\mathbb{T},S_0^3));
    \end{align*}

    \item for any $t\in[0,T]$, $\mathbb{P}$ a.s.
    \begin{align*}
    &\rho(\mathfrak{t}\wedge t)=\rho_0-\int_{0}^{\mathfrak{t}\wedge t}\Dv_x(\rho \mathbf{u})d\xi,\\
     &(\rho \mathbf{u})(\mathfrak{t}\wedge t)=\rho_0\mathbf{u}_0-\int_{0}^{\mathfrak{t}\wedge t}\Dv_x(\rho \mathbf{u}\otimes \mathbf{u}) d\xi-\int_{0}^{\mathfrak{t}\wedge t}\nabla_x(A\rho^\gamma)d\xi+\int_{0}^{\mathfrak{t}\wedge t}\mathcal{L}\mathbf{u}d\xi\\
     &\qquad\qquad\quad-\int_{0}^{\mathfrak{t}\wedge t}\Dv_x(L\nabla_x Q\odot \nabla_x Q-\mathcal{F}(Q){\rm I}_3)d\xi\\
        &\qquad\qquad\quad+\int_{0}^{\mathfrak{t}\wedge t}L\Dv_x(Q\triangle Q-\triangle QQ)d\xi
        +\int_{0}^{\mathfrak{t}\wedge t}\mathbb{G}(\rho,\rho \mathbf{u})dW, \\
        &Q(\mathfrak{t}\wedge t)=Q_0-\int_{0}^{\mathfrak{t}\wedge t}\mathbf{u}\cdot\nabla_x Qd\xi+\int_{0}^{\mathfrak{t}\wedge t}(\Theta Q-Q\Theta) d\xi+\int_{0}^{\mathfrak{t}\wedge t}\Gamma \mathcal{H}(Q)d\xi.
    \end{align*}
    \end{enumerate}

    We say that the pathwise uniqueness holds: if $(\rho_1, \mathbf{u}_{1}, Q_{1},\mathfrak{t}_{1})$ and $(\rho_2, \mathbf{u}_{2}, Q_{2}, \mathfrak{t}_{2})$ are two local strong pathwise solutions of system (\ref{qn}) with
 \begin{eqnarray*}
\mathbb{P}\{(\rho_1(0), \mathbf{u}_{1}(0), Q_{1}(0))=(\rho_2(0), \mathbf{u}_{2}(0), Q_{2}(0))\}=1,
 \end{eqnarray*}
then
\begin{eqnarray*}
\mathbb{P}\left\{(\rho_1(t,x), \mathbf{u}_{1}(t,x), Q_{1}(t,x))=(\rho_2(t,x), \mathbf{u}_{2}(t,x), Q_{2}(t,x));\forall t\in[0,\mathfrak{t}_{1}\wedge\mathfrak{t}_{2}]\right\}=1.
\end{eqnarray*}
\end{definition}

\begin{definition}\label{de2}
    \rm{(Maximal strong pathwise solution)} A maximal pathwise solution is a quintuple $(\rho, \mathbf{u},Q,\{\tau_{n}\}_{n\geq1}, \mathfrak{t})$ such that each $(\rho, \mathbf{u},Q,\tau_{n})$ is a local pathwise solution in the sense of Definition \ref{de1} and $\{\tau_{n}\}$ is an increasing sequence with
$\lim_{n\rightarrow\infty}\tau_{n}=\mathfrak{t}$ and
\begin{equation*}
\sup\limits_{t\in[0,\tau_{n}]}\|\mathbf{u}(t)\|_{2,\infty}\geq n,~ \sup\limits_{t\in[0,\tau_{n}]}\|Q(t)\|_{3,\infty}\geq n,~~\mbox{on the set} ~~ \{\mathfrak{t}<\infty\}.
\end{equation*}
\end{definition}
From the Definition \ref{de2}, we can see that
$$\sup_{t\in [0,\mathfrak{t})}\|\mathbf{u}(t)\|_{2,\infty}=\infty,~\sup_{t\in [0,\mathfrak{t})}\|Q(t)\|_{3,\infty}=\infty, \mbox{ on the set }\{\mathfrak{t}<\infty\}.$$
This means the existence time for the solution is determined by the explosion time of the $W^{2,\infty}$-norm of the velocity and $W^{3,\infty}$-norm of the $Q$-tensor.

Throughout the paper, we impose the following assumptions on the noise intensity $\mathbb{G}$: there exists a constant $C$ such that for any $s\geq 0, \rho>0$,
\begin{eqnarray}\label{2.5}
\|\rho^{-1}\mathbb{G}(\rho, \mathbf{u})\|_{L_2(\mathfrak{U}; W^{s,2}(\mathbb{T}))}^2\leq C(\|\rho\|^2_{1,\infty}+\|\mathbf{u}\|^2_{2,\infty})\|\rho, \mathbf{u}\|_{s,2}^2,
\end{eqnarray}
and
\begin{eqnarray}\label{2.5*}
&&\quad\|\rho_1^{-1}\mathbb{G}(\rho_1, \mathbf{u}_1)-\rho_2^{-1}\mathbb{G}(\rho_2, \mathbf{u}_2)\|_{L_2(\mathfrak{U}; W^{s,2}(\mathbb{T}))}^2\nonumber \\
&&\leq C(\|\rho_1, \rho_2\|^2_{1,\infty}+\|\mathbf{u}_1, \mathbf{u}_2\|^2_{2,\infty})\|\rho_1-\rho_2, \mathbf{u}_1-\mathbf{u}_2\|_{s,2}^2,
\end{eqnarray}
where the norm $\|u, v\|_{s,2}^2:=\|u\|_{s,2}^2+\|v\|_{s,2}^2$ for $u,v\in W^{s,2}$. Assumption \eqref{2.5} will be used for constructing the a priori estimate, while assumption \eqref{2.5*} will be applied to identify the limit and establish the uniqueness.
\begin{remark}\label{rem2.6} Set $r=\sqrt{\frac{2A\gamma}{\gamma-1}}\rho^{\frac{\gamma-1}{2}}$. If the initial data $r_0$ satisfies some certain assumption, see Theorem \ref{th3.1}, then the assumptions \eqref{2.5},\eqref{2.5*} still hold if we replace $\rho$ by $r$ and $\rho^{-1}\mathbb{G}(\rho, \mathbf{u})$ by $\mathbb{F}(r,\mathbf{u})=\frac{1}{\rho(r)}\mathbb{G}(\rho(r),\rho(r) \mathbf{u})$.
\end{remark}

Our main result of this paper is below.
\begin{theorem}\label{thm2.5}
    Assume $s\in\mathbb{N}$ satisfies $s>\frac{9}{2}$, and the coefficient $\mathbb{G}$ satisfies the assumptions \eqref{2.5},\eqref{2.5*}, and the initial data $(\rho_0,\mathbf{u}_0,Q_0)$ is $\mathcal{F}_0$-measurable random variable, with values in $W^{s,2}(\mathbb{T})\times W^{s,2}(\mathbb{T};\mathbb{R}^3)\times W^{s+1,2}(\mathbb{T};S_0^3)$, also $\rho_0>0$, $\mathbb{P}$ a.s.. Then there exists a unique maximal strong pathwise solution $(\rho,\mathbf{u},Q,\mathfrak{t})$ to system \eqref{qn}-\eqref{1.4} in the sense of Definition \ref{de2}.
\end{theorem}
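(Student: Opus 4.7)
The strategy is to follow the seven-step architecture announced in the introduction, converting the quasilinear hyperbolic-parabolic stochastic system into a symmetric one and then building the solution by Galerkin approximation, stochastic compactness, identification of the limit, pathwise uniqueness, Yamada--Watanabe, and a cut-off removal. First I would introduce the new variable $r=\sqrt{\frac{2A\gamma}{\gamma-1}}\,\rho^{(\gamma-1)/2}$ in order to symmetrize the $(\rho,\mathbf{u})$ block (giving the equation for $r$ a transport structure and producing the scalar prefactor $f(r)$ that appears in Lemma~\ref{lem2.4}); the noise coefficient becomes $\mathbb{F}(r,\mathbf{u})=\rho^{-1}\mathbb{G}(\rho,\rho\mathbf{u})$, for which the assumptions \eqref{2.5} and \eqref{2.5*} still hold by Remark~\ref{rem2.6}. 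Because the high-order estimates produce right-hand sides of the form $(\|\mathbf{u}\|_{2,\infty}+\|Q\|_{3,\infty})\bigl(\|\mathbf{u}\|_{s,2}^2+\|Q\|_{s+1,2}^2\bigr)$ and $(\|\mathbf{u}\|_{2,\infty}+\|\rho\|_{1,\infty})\|\rho,\mathbf{u}\|_{s,2}^2$, I would work at the cut-off level first: fix $R>0$, multiply the nonlinearities by the smooth cut-off $\Phi_R^{\mathbf{u},Q}$ already introduced in the macros (a function of $\|\mathbf{u}\|_{2,\infty}+\|Q\|_{3,\infty}$), and build a solution of the \emph{truncated} symmetric system on a uniform time interval.

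Second, I would construct approximate solutions to the truncated symmetric system by a two-layer Galerkin scheme in the spirit of \cite{Hofmanova}. For a velocity $\mathbf{u}_n$ in a finite-dimensional subspace spanned by the first $n$ trigonometric modes, the $r$-equation is a linear transport equation solved by the method of characteristics, and the $Q$-equation is a linear parabolic problem; substituting these into the projected momentum equation yields a finite-dimensional SDE for $\mathbf{u}_n$, whose global-in-time well-posedness follows because the cut-off kills the superlinearity. The ``extra layer" is needed because a direct Galerkin truncation of the stochastic integral does not, by itself, provide the compactness needed when passing $n\to\infty$; I would use a smoothing of the noise coefficient and then pass to the limit in the regularization parameter before the Galerkin parameter. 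At this level, uniform-in-$n$ $W^{s,2}\times W^{s,2}\times W^{s+1,2}$ energy bounds come from applying $\partial_x^\alpha$ for $|\alpha|\le s$, using the Moser-type Lemma~\ref{lem2.1} and Lemma~\ref{lem2.2} to control commutators, the It\^o correction via the growth hypothesis \eqref{2.5}, and the cancellation of the two top-order nonlinearities $\mathrm{div}_x(Q\triangle Q-\triangle Q Q)$ and $\Theta Q-Q\Theta$ via Lemma~\ref{lem2.4} applied with $Q'=\partial_x^\alpha Q$ and the scalar $f(r)$ from the symmetrization.

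Third, I would pass to the limit by stochastic compactness. The uniform bounds plus a fractional time-regularity bound coming from \eqref{2.4} and the embedding \eqref{2.31} furnish tightness of the laws of $(r_n,\mathbf{u}_n,Q_n,W)$ in the path space $C([0,T];W^{s-1,2})\times(C([0,T];W^{s-1,2})\cap L^2(0,T;W^{s+1,2}))\times C([0,T];W^{s,2})\times C([0,T];\mathfrak{U}_0)$, and a Skorokhod representation then yields a new probability space on which the approximations converge almost surely; identifying the limiting stochastic integral with the help of \eqref{2.5*} gives a global martingale solution to the truncated system. Then I would prove pathwise uniqueness for the truncated symmetric system by testing the difference equations at the $L^2$ level (not $H^s$, to keep estimates closed); again the obstruction is the top-order term $L\,\mathrm{div}_x(Q\triangle Q-\triangle Q Q)$, which I would cancel against the skew-symmetric rotation contribution using Lemma~\ref{lem2.4}, paying for the extra scalar $f(r)$ by an additional H\"older/product estimate. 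Pathwise uniqueness plus existence of a martingale solution gives, by the Yamada--Watanabe principle as formulated in Appendix-type results like \cite{breit}, a strong pathwise solution to the truncated symmetric system.

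Finally, I would remove both the cut-off and the symmetrization. Define $\mathfrak{t}_R:=\inf\{t:\|\mathbf{u}(t)\|_{2,\infty}+\|Q(t)\|_{3,\infty}\ge R\}$; on $[0,\mathfrak{t}_R)$ the cut-off is inactive so the truncated solution solves the original symmetric system, and pathwise uniqueness shows the solutions for different $R$ agree on their common domain, allowing me to glue them into a maximal solution on $\mathfrak{t}:=\lim_R\mathfrak{t}_R$ with the blow-up characterization required by Definition~\ref{de2}. Reverting from $r$ back to $\rho$ preserves positivity and regularity because $r>0$ is propagated by the transport equation and $\gamma>1$, and a further cut-off (in the initial data, as indicated at the end of the introduction) removes the a priori lower bound assumption on $\rho_0$ that is implicit in the symmetrization. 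I expect the hardest single step to be the pathwise uniqueness argument, because the cancellation from Lemma~\ref{lem2.4} requires $\mathbf{u}\in W^{2,\infty}$ and $Q\in W^{3,\infty}$ to be genuinely controlled on the difference equation, which is precisely what forces the regularity threshold $s>9/2$ rather than the $s>7/2$ sufficient for existence of a martingale solution.
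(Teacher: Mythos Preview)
Your overall architecture matches the paper's, but there is a genuine gap in the uniqueness step that undermines the whole argument. You propose to prove pathwise uniqueness for the truncated system ``at the $L^2$ level (not $H^s$, to keep estimates closed)''. This cannot close. The truncated system carries the cut-off $\Phi_R^{\mathbf{u},Q}=\Phi_R(\|\mathbf{u}\|_{2,\infty})\Phi_R(\|Q\|_{3,\infty})$ in front of every nonlinearity, so the difference equations contain terms like $(\Phi_R^{\mathbf{u}_1,Q_1}-\Phi_R^{\mathbf{u}_2,Q_2})\cdot(\text{nonlinearity})$, and by the mean value theorem
\[
\bigl|\Phi_R^{\mathbf{u}_1,Q_1}-\Phi_R^{\mathbf{u}_2,Q_2}\bigr|\le C\bigl(\|\mathbf{u}_1-\mathbf{u}_2\|_{2,\infty}+\|Q_1-Q_2\|_{3,\infty}\bigr).
\]
An $L^2$ estimate on the differences gives no control of these $W^{2,\infty}$ and $W^{3,\infty}$ norms, so the Gronwall loop does not close. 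The paper instead runs the uniqueness estimate at the $W^{s-1,2}\times W^{s-1,2}\times W^{s,2}$ level (Proposition~\ref{pro3.3}), applying $\partial_x^\alpha$ with $|\alpha|\le s-1$ and using the embeddings $W^{s-1,2}\hookrightarrow W^{2,\infty}$ and $W^{s,2}\hookrightarrow W^{3,\infty}$, which require exactly $s-1>3/2+2$ and $s>3/2+3$, i.e.\ integer $s>9/2$. So the regularity threshold you correctly flag at the end is produced by the cut-off in the uniqueness estimate, not by Lemma~\ref{lem2.4} per se; your explanation of the mechanism is inconsistent with testing at $L^2$.

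Two smaller points. First, your description of the ``extra layer'' is off: the paper does not smooth the noise coefficient. For each fixed Galerkin level $n$ it introduces a second cut-off $\Psi_K$ on the finite-dimensional coordinates of $\mathbf{u}$ (so that $\|\mathbf{u}^K\|_{C([0,T];X_n)}\le 2K$), runs a Banach fixed point on $L^2(\Omega;C([0,T^*];X_n))$, extends to $[0,T]$ via the uniform a priori bounds, and only then sends $K\to\infty$ using a stopping-time argument; the limit $n\to\infty$ comes afterwards. Second, the $Q$-equation is not linear parabolic for fixed $\mathbf{u}$: it is semilinear through $\mathcal{K}(Q)$ and the rotational terms $\Theta Q-Q\Theta$, and the paper devotes a separate lemma (Lemma~\ref{G.1}) to its global well-posedness and the continuity of the map $\mathbf{u}\mapsto Q[\mathbf{u}]$, which is needed for the fixed-point contraction.
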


\section{\bf Construction of Truncated Symmetric System}
Before the construction of the strong solution, we need to assume first that the vacuum state does not appear. By doing so, we are able to rewrite the system \eqref{qn} into the symmetric system following the operation in \cite{breit}. To begin with, applying equation \eqref{qn}(1), then equation \eqref{qn}(2) can be written into the following form
\begin{align*}
&\rho \partial_t\mathbf{u}+\rho \mathbf{u}\cdot \nabla_x \mathbf{u}+A\nabla_x \rho^\gamma\nonumber\\
=&\mathcal{L}\mathbf{u}-\Dv_x(L\nabla_x Q\odot \nabla_x Q-\mathcal{F}(Q){\rm I}_3)
        +L\Dv_x(Q\triangle Q-\triangle QQ)+\mathbb{G}(\rho,\rho \mathbf{u})\frac{dW}{dt},
\end{align*}
as $\rho>0$, divide the above equation by $\rho$ on both sides, we could have
\begin{align}
&\partial_t\mathbf{u}+\mathbf{u}\cdot \nabla_x \mathbf{u}+\frac{A}{\rho}\nabla_x \rho^\gamma\nonumber\\
=&\frac{1}{\rho}\mathcal{L}\mathbf{u}-\frac{1}{\rho}\Dv_x(L\nabla_x Q\odot \nabla_x Q-\mathcal{F}(Q){\rm I}_3)
        +L\frac{1}{\rho}\Dv_x(Q\triangle Q-\triangle QQ)\nonumber\\&+\frac{1}{\rho}\mathbb{G}(\rho,\rho \mathbf{u})\frac{dW}{dt}.
\end{align}
The pressure term can be written into a symmetric form:
\begin{align*}
\frac{A}{\rho}\nabla_x \rho^\gamma=\frac{A}{\gamma-1}\nabla_x\rho^{\gamma-1}=\frac{2A\gamma}{\gamma-1}\rho^\frac{\gamma-1}{2}\nabla_x \rho^\frac{\gamma-1}{2}.
\end{align*}
Considering this, define
\begin{align*}
 r=\sqrt{\frac{2A\gamma}{\gamma-1}}\rho^{\frac{\gamma-1}{2}},
\end{align*}
and
\begin{align*}
D(r)=\frac{1}{\rho(r)}=\left(\frac{\gamma-1}{2A\gamma}\right)^{-\frac{1}{\gamma-1}}r^{-\frac{2}{\gamma-1}}, ~\mathbb{F}(r,\mathbf{u})=\frac{1}{\rho(r)}\mathbb{G}(\rho(r),\rho(r) \mathbf{u}).
\end{align*}
Then, the system \eqref{qn} can be transformed into
\begin{equation}\label{qnt1}
    \begin{cases} dr+\left(\mathbf{u}\cdot\nabla_xr+\frac{\gamma-1}{2}r\Dv_x\mathbf{u}\right)dt= 0,\\
        d\mathbf{u}+(\mathbf{u}\cdot\nabla_x \mathbf{u}+r\nabla_x r)dt\\
    \qquad= D(r)(\mathcal{L}\mathbf{u}-\Dv_x(L\nabla_x Q\odot \nabla_x Q
    -\mathcal{F}(Q){\rm I}_3)+L\Dv_x(Q\triangle Q-\triangle QQ))dt\\
    \quad\qquad+\mathbb{F}(r,\mathbf{u})dW,\\
        dQ+(\mathbf{u}\cdot\nabla_x Q-\Theta Q+Q\Theta) dt
        =\Gamma \mathcal{H}(Q)dt.
    \end{cases}
    \end{equation}

As mentioned in the introduction, we add a cut-off function to render the nonlinear terms, where the cut-off function depends only on $\|\mathbf{u}\|_{2, \infty}, \|Q\|_{3,\infty}$.

Let $\Phi_{R}:[0,\infty)\rightarrow [0,1]$ be a $C^{\infty}$-smooth function defined as follows
\begin{eqnarray*}
 \Phi_{R}(x) =\left\{\begin{array}{ll}
                  1,& \mbox{if} \ 0<x<R,  \\
                  0,& \mbox{if} \ x>2R.  \\
                \end{array}\right.
\end{eqnarray*}
Then, define $\Truca=\Phi_{R}^\mathbf{u}\cdot\Phi_{R}^Q$, where $\Phi_{R}^\mathbf{u}=\Phi_{R}(\|\mathbf{u}\|_{2,\infty}), \Phi_{R}^Q=\Phi_{R}(\|Q\|_{3,\infty})$ and add the cut-off function in front of nonlinear terms of system (\ref{qnt1}), we have
\begin{equation}\label{qnt}
    \begin{cases} dr + \Truca \left(\mathbf{u}\cdot\nabla_xr+\frac{\gamma-1}{2}r\Dv_x\mathbf{u}\right)dt= 0,\\
        d\mathbf{u}+\Truca (\mathbf{u}\cdot\nabla_x \mathbf{u}+r\nabla_x r)dt\\
    \quad=\Truca D(r)(\mathcal{L}\mathbf{u}-\Dv_x(L\nabla_x Q\odot \nabla_x Q
    -\mathcal{F}(Q){\rm I}_3)+L\Dv_x(Q\triangle Q-\triangle QQ))dt\\
    \quad\quad+\Truca \mathbb{F}(r,\mathbf{u})dW,\\
        dQ+\Truca (\mathbf{u}\cdot\nabla_x Q-\Theta Q+Q\Theta) dt
        =\Gamma L\triangle Qdt+\Truca \mathcal{K}(Q)dt.
    \end{cases}
    \end{equation}
\begin{remark} In system (\ref{qnt}), we use the same cut-off function $\Truca$ in front of the all nonlinear terms to simplify the notation. Actually, we can replace $\Truca$ by $\Phi_R^\mathbf{u}$ on the left hand side of equations (\ref{qnt})(1)(2) and in front of the stochastic term, replace $\Truca$ by $\Phi_R^Q$ on the right hand side of equation (\ref{qnt})(3).
\end{remark}
In the following, we mainly discuss the truncated system (\ref{qnt}).

\section{\bf  Existence of Strong Martingale Solution}
In this section, the main aim is that, proving the existence of a strong martingale solution to system (\ref{qnt}) which is strong in PDE sense and weak in probability sense if the initial condition is good enough. To start, we bring in the concept of strong martingale solution.
\begin{definition}\label{def3.1}
(Strong martingale solution) Assume that $\Lambda$ is a Borel probability measure on the space
$W^{s,2}(\mathbb{T})\times W^{s,2}(\mathbb{T},\mathbb{R}^3)\times W^{s+1,2}(\mathbb{T},S_0^3)$ for integer $s>\frac{7}{2}$, then the quintuple $$((\Omega,\mathcal{F},\{\mathcal{F}_t\}_{t\geq 0},\mathbb{P}),r,\mathbf{u},Q,W)$$ is a strong martingale solution to the truncated system \eqref{qnt} equipped with the initial law $\Lambda$ if the following conditions hold
\begin{enumerate}
  \item $(\Omega,\mathcal{F},\{\mathcal{F}_t\}_{t\geq 0},\mathbb{P})$ is a stochastic basis with a complete right-continuous filtration, $W$ is a Wiener process relative to the filtration $\mathcal{F}_t$;

  \item $r$, $\mathbf{u}, Q$ are $\mathcal{F}_t$-progressively measurable processes with values in $W^{s,2}(\mathbb{T}), W^{s,2}(\mathbb{T},\mathbb{R}^3)$, $W^{s+1,2}(\mathbb{T},S_0^3)$, satisfying
      \begin{align*}
        &\qquad r\in L^2(\Omega;C([0,T];W^{s,2}(\mathbb{T}))),~r(t)>0,~ \mathbb{P}\mbox{ a.s.}, {\rm for ~ all} ~t\in [0,T],\\
        &\qquad \mathbf{u}\in L^2(\Omega;L^\infty(0,T;W^{s,2}(\mathbb{T};\mathbb{R}^3))\cap C([0,T];W^{s-1,2}(\mathbb{T}, \mathbb{R}^3))),\\
        &\qquad Q\in L^2(\Omega;L^\infty(0,T;W^{s+1,2}(\mathbb{T};S_0^3))
        \cap L^2(0,T;W^{s+2,2}(\mathbb{T};S_0^3))\cap C([0,T];W^{s,2}(\mathbb{T};S_0^3)));
      \end{align*}

  \item the initial law $\Lambda=\mathbb{P}\circ (r_0, \mathbf{u}_0, Q_0)^{-1}$;

  \item for all $t\in[0,T]$, $\mathbb{P}$ a.s.
    \begin{align*}
     \quad\quad\quad& r(t)=r(0)-\int_{0}^{t} \Truca \left(\mathbf{u}\cdot\nabla_xr+\frac{\gamma-1}{2}r\Dv_x\mathbf{u}\right)d\xi,\\
      &\mathbf{u}(t)=\mathbf{u}(0)-\int_{0}^{t}\Truca (\mathbf{u}\cdot\nabla_x \mathbf{u}+r\nabla_x r)d\xi\\
    &\qquad\quad+\int_{0}^{t}\Truca D(r)(\mathcal{L}\mathbf{u}-\Dv_x(L\nabla_x Q\odot \nabla_x Q
    -\mathcal{F}(Q){\rm I}_3)\\
    &\qquad\quad+L\Dv_x(Q\triangle Q-\triangle QQ))d\xi+\int_{0}^{t}\Truca \mathbb{F}(r,\mathbf{u})dW,\\
    &Q(t)=Q(0)-\int_{0}^{t}\Truca (\mathbf{u}\cdot\nabla_x Q-\Theta Q+Q\Theta) d\xi
        +\int_{0}^{t} \Gamma L\triangle Q+\Truca\mathcal{K}(Q)d\xi.
    \end{align*}
\end{enumerate}
\end{definition}

 We state our main result for this section.
\begin{theorem}\label{th3.1}
Assume the initial data $(r_0,\mathbf{u}_0,Q_0)$ satisfies
$$(r_0,\mathbf{u}_0,Q_0)\in L^p(\Omega;W^{s,2}(\mathbb{T})\times W^{s,2}(\mathbb{T}, \mathbb{R}^3)\times W^{s+1,2}(\mathbb{T},S_0^3)),$$
for any $1\leq p<\infty$, $s>\frac{7}{2}$ be the integer, and in addition
$$\|Q_0\|_{1,2}<R,~ \|r_0\|_{1,\infty}<R,~r_0>\frac{1}{R},~~\mathbb{P}~~ \mbox{a.s}.$$
for constant $R>0$, the coefficient $\mathbb{G}$ satisfies assumptions \eqref{2.5},\eqref{2.5*}, then there exists a strong martingale solution to the system \eqref{qnt} with the initial law $\Lambda=\mathbb{P}\circ (r_0, \mathbf{u}_0, Q_0)^{-1}$ in the sense of Definition \ref{def3.1} and we also have
$$r(t, \cdot)\geq \mathcal{C}(R)>0,~ \mathbb{P}~ \mbox{a.s.}, ~{\rm for ~ all} ~t\in [0,T],$$
where $\mathcal{C}(R)$ is a constant depending on $R$, and
      \begin{align*}
        &\mathbb{E}\left[\sup_{t\in[0,T]}(\|r(t),\mathbf{u}(t)\|_{s,2}^2+\|Q(t)\|_{s+1,2}^2)+\int_{0}^{T}
        \Truca\|\mathbf{u}\|_{s+1,2}^2+\|Q\|_{s+2,2}^2dt\right]^p
         \leq C,
      \end{align*}
for any $T>0$, where $C=C(p,s, R, \mathbb{T},T, L, \Gamma)$ is a constant.
\end{theorem}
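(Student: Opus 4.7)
The plan is to construct the solution via a layered approximation scheme in the spirit of Hofmanov\'a. For a fixed Galerkin parameter $N$, let $X_N$ be the span of the first $N$ eigenfunctions of $-\triangle$ with periodic boundary condition, and let $P_N$ denote the associated $L^2$-orthogonal projection. Because the standard fixed-point approach is blocked by the lack of Lipschitz bounds on $\mathbb{F}$ at the top Sobolev level, I will insert an extra approximation parameter $\varepsilon>0$ that regularizes the diffusion coefficient (for instance by composing $\mathbb{F}$ with a smooth truncation at height $1/\varepsilon$ on the $W^{s,2}$ norm), so that the finite-dimensional system becomes a globally Lipschitz SDE. For fixed $\mathbf{u}\in X_N$ continuous in time, the mass equation is a linear transport equation, solvable by characteristics: writing $r(t,\Phi_t(x))=r_0(x)\exp\bigl(-\tfrac{\gamma-1}{2}\int_0^t \Truca \Dv_x\mathbf{u}\,d\xi\bigr)$ both preserves the bound $r_0>1/R$ (yielding the quantitative lower bound $r\geq\mathcal{C}(R)$ uniform in $N,\varepsilon$) and gives control on $\|r\|_{s,2}$ via energy estimates. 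Similarly, the $Q$-tensor equation is parabolic in $Q$ once $\mathbf{u}$ is fixed, and admits a unique solution in $L^2(0,T;W^{s+2,2})\cap C([0,T];W^{s+1,2})$. Feeding $r[\mathbf{u}]$ and $Q[\mathbf{u}]$ back into the Galerkin projection of the momentum equation produces a finite-dimensional SDE to which the standard strong existence theorem for globally Lipschitz coefficients applies, delivering $\mathbf{u}_{N,\varepsilon}$.

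Next I would derive $N,\varepsilon$-uniform a priori estimates at the level $W^{s,2}\times W^{s,2}\times W^{s+1,2}$. The strategy is to apply $\partial_x^\alpha$ for $|\alpha|\le s$ to each equation, test against the natural partner (with an extra derivative on $Q$), and sum. Commutators between $\partial_x^\alpha$ and the transport operator are controlled by the Kato-Ponce inequality (Lemma~\ref{lem2.1}), composition terms such as $D(r)$, $\mathcal{F}(Q)$, $\mathcal{K}(Q)$ by the Moser bound (Lemma~\ref{lem2.2}), and the pressure-like coupling $r\nabla_x r \leftrightarrow \tfrac{\gamma-1}{2}r\Dv_x\mathbf{u}$ by the symmetric structure chosen in Section~3. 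The crucial point is the top-order term $\Dv_x(Q\triangle Q-\triangle Q Q)$ in the momentum equation, which cannot be bounded directly; I will cancel it against the stretching term $\Theta Q - Q\Theta$ in the $Q$-equation via Lemma~\ref{lem2.4}, pairing $D(r)$ with the scalar $f(r)$ of that lemma. The stochastic term is handled by It\^o's formula on $\|\partial_x^\alpha(r,\mathbf{u})\|^2$, the BDG inequality~(2.4), and the growth assumption~\eqref{2.5} transferred to $\mathbb{F}$ via Remark~\ref{rem2.6}; the cutoff $\Truca$ absorbs all the $W^{2,\infty}$ and $W^{3,\infty}$ factors, leaving a linear-in-$\|(r,\mathbf{u},\nabla_x Q)\|_{s,2}^2$ Gr\"onwall inequality that closes uniformly.

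For compactness I would combine these uniform bounds with a fractional time-regularity estimate in $W^{\alpha,p}(0,T;W^{s-1,2})$ for $\alpha<1/2$, read off from each equation (parabolic/transport structure plus BDG for the $\mathbf{u}$-equation), and then invoke the Jakubowski-Skorokhod representation theorem on a suitable path space with Aubin-Lions-type compact embeddings. On the new probability space, $(r_{N,\varepsilon},\mathbf{u}_{N,\varepsilon},Q_{N,\varepsilon},W_{N,\varepsilon})$ converges almost surely in a weak sense together with strong convergence in lower-order norms. Identification of the limit is routine: the cutoff $\Truca$ is continuous in $W^{2,\infty}\times W^{3,\infty}$ and hence in the strong topology by the embedding $W^{s,2}\hookrightarrow W^{2,\infty}$ ($s>7/2$); the Lipschitz assumption \eqref{2.5*} on $\mathbb{F}$ permits passage to the limit in the stochastic integral via the convergence criterion of Debussche-Glatt-Holtz-Temam. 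A lower-semicontinuity argument promotes the uniform bounds to the limiting process, producing the displayed estimate.

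The principal obstacle is the \emph{extra layer approximation} required to tame the stochastic integral at the Galerkin level: without the $\varepsilon$-truncation, the coefficient $\Truca\mathbb{F}(r,P_N\mathbf{u})$ is only locally Lipschitz on $W^{s,2}$ with constant growing in $N$, which obstructs both solvability of the finite-dimensional SDE and the reconstruction of the cylindrical Wiener process in the limit. A secondary subtlety is that the lower bound $r\geq\mathcal{C}(R)$ must survive the entire limiting procedure, which is what allows $D(r)=1/\rho(r)$ to stay bounded; I will handle this by transporting the pointwise bound along the characteristic flow at each approximation stage and then passing it through the a.s.\ convergence given by Skorokhod. Uniqueness at the approximation level (needed to invoke Gy\"ongy-Krylov and return to the original probability space, thereby justifying the ``strong martingale'' terminology in the sense of Definition~\ref{def3.1}) will follow from a difference estimate analogous to the a priori one, using \eqref{2.5*} in place of \eqref{2.5}.
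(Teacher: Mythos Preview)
Your overall plan coincides with the paper's: solve $r[\mathbf{u}]$ and $Q[\mathbf{u}]$ for fixed $\mathbf{u}\in X_N$, run a fixed-point for $\mathbf{u}$ with an extra layer to tame the noise, derive uniform $W^{s,2}$-estimates via Kato--Ponce commutators and the cancellation of Lemma~\ref{lem2.4} (with $f=D(r)$), then obtain tightness, apply Skorokhod, and identify the limit by martingale methods.

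Two points of departure deserve comment. First, the paper's extra layer is not a truncation of $\mathbb{F}$ alone: it introduces $\mathbf{u}^K=\sum_i\Psi_K(\mathbf{v}_i)\mathbf{v}_i\psi_i$ and feeds $\mathbf{u}^K$ into \emph{every} coefficient of the fixed-point map (drift and diffusion alike), then runs a Banach contraction on $L^2(\Omega;C([0,T^*];X_n))$ and removes $K$ via stopping times $\tau_K$. Your proposal to truncate only $\mathbb{F}$ can be made to work, but the resulting object is not a ``finite-dimensional SDE with globally Lipschitz coefficients'' in the standard sense: the drift depends on the whole path of $\mathbf{u}$ through $r[\mathbf{u}],Q[\mathbf{u}]$, so you are still in a fixed-point framework and must separately argue that the drift is globally Lipschitz on $C([0,T];X_N)$. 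This does follow from the built-in cutoff $\Truca$ together with equivalence of norms on $X_N$, but it is not a consequence of regularizing $\mathbb{F}$.

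Second, your final paragraph misreads the target. Theorem~\ref{th3.1} asks only for a \emph{martingale} solution in the sense of Definition~\ref{def3.1}, i.e.\ strong in the PDE sense but living on a possibly new probability space. No uniqueness and no Gy\"ongy--Krylov step is required; the paper stops at Skorokhod plus identification of the limit. The pathwise upgrade via Gy\"ongy--Krylov is the content of Theorem~\ref{th4.2} in Section~5, which in the paper uses the stronger hypothesis $s>\tfrac{9}{2}$.
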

\begin{remark}
Here, we assume that $\|Q_0\|_{1,2}<R$, $\mathbb{P}$~ a.s. for establishing the Galerkin approximate solution, which could also be relaxed to general case, see Section 6.
\end{remark}

The following part is devoted to proving Theorem \ref{th3.1} which is divided into three steps. First, we construct the approximate solution in the finite-dimensional space.  Then we get the uniform estimate of the approximate solution, and show the stochastic compactness. Next, the existence of the strong martingale solution can be derived from taking the limit of the approximate system.

{\bf 4.1 Galerkin approximate system}. In this subsection, we construct the Galerkin approximate solution of system (\ref{qnt}). First, for any smooth functions $\mathbf{u}, Q$, the transport equation \eqref{qnt}(1) would admit a classical solution $r=r[\mathbf{u}]$, and the solution is unique if the initial data $r_0$ is given. The solution $r[\mathbf{u}]$ shares the same regularity with the initial data $r_0$. In addition, for certain constant c, we have, see also \cite[Section 3.1]{breit}
    \begin{equation}\label{2.1}
        \begin{aligned}
          &\frac{1}{R}\exp(-cRt)\leq\exp(-cRt)\inf_{x\in \mathbb{T}}r_0\leq r(t,\cdot)
          \leq \exp(cRt)\sup_{x\in \mathbb{T}}r_0\leq R\exp(cRt),\\
          &|\nabla_xr(t,\cdot)|\leq \exp(cRt)|\nabla_xr_0|\leq R\exp(cRt),~~\mbox{for any}~~t\in[0,T].
        \end{aligned}
    \end{equation}
 Using the bound (\ref{2.1}), after a simple calculation, yields
\begin{align}\label{4.2}
 \|D(r)^{-1}\|_{1,\infty}+\|D(r)\|_{1,\infty}\leq C(R)\exp(cRt).
\end{align}
In addition, by the mean value theorem, the bound (\ref{2.1}) and Lemmas \ref{lem2.1}, \ref{lem2.2}, we have for any $s>\frac{d}{2}$
\begin{align}\label{4.3}
\|D(r)\|_{s,2}\leq C(R,T)\|r\|_{s,2},
\end{align}
and
\begin{align}\label{4.4}
 \|D(r_1)-D(r_2)\|_{s,2}\leq C(R,T)\|r_1, r_2\|_{s,2}\|r_1-r_2\|_{s,2}.
\end{align}
Indeed, due to the mean value theorem, there exists some $\theta\in(0,1)$ such that
\begin{align*}
  &\quad\|D(r_1)-D(r_2)\|_{s,2}\\
  &=\left\|\frac{d D}{dr}(\theta r_1+(1-\theta) r_2)\cdot(r_1-r_2)\right\|_{s,2} \\
  &\leq C\left\|\frac{d D}{dr}(\theta r_1+(1-\theta) r_2)\right\|_{\infty}\|r_1-r_2\|_{s,2}
  +C\|r_1-r_2\|_{\infty}\left\|\frac{d D}{dr}(\theta r_1+(1-\theta) r_2)\right\|_{s,2}\\
  &\leq C(R,T)\|r_1, r_2\|_{s,2}\|r_1-r_2\|_{s,2}.
\end{align*}

\begin{lemma}\label{G.1}
    For any smooth function $\mathbf{u}\in C([0,T]; X_N(\mathbb{T}))$ and integer $s>\frac{7}{2}$, there exists a unique solution
    \begin{align*}
    Q\in C([0,T];W^{s+1,2}(\mathbb{T},S_0^3))\cap L^2(0,T;W^{s+2,2}(\mathbb{T},S_0^3))
    \end{align*}
to the initial value problem
    \begin{equation}\label{22.2}
        \begin{cases}
        Q_t+\Truca(\mathbf{u}\cdot\nabla_x Q-\Theta Q+Q\Theta)=\Gamma L\triangle Q+\Truca\mathcal{K}(Q),  \mbox{in } \mathbb{T}\times (0,T) \\
        Q|_{t=0}=Q_0(x)\in W^{s+1,2}(\mathbb{T}, S_0^3).
        \end{cases}
    \end{equation}
Moreover, the mapping
\begin{eqnarray}\label{3.4}
\mathbf{u}\rightarrow Q[\mathbf{u}]: C([0,T];X_N(\mathbb{T}))\rightarrow C([0,T];W^{s+1,2}(\mathbb{T},S_0^3))\cap L^2(0,T;W^{s+2,2}(\mathbb{T},S_0^3))
\end{eqnarray}
is continuous on a bounded set $B\in C([0,T];X_N(\mathbb{T}))$, where $X_N$ is a finite dimensional space spanned by $\{\psi_m\}_{m=1}^N$, see (\ref{4.12}).
\end{lemma}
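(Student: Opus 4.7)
The plan is to treat \eqref{22.2} as a semilinear parabolic equation whose leading operator is the constant-coefficient diffusion $\Gamma L\triangle$ and whose remaining terms are globally bounded lower-order perturbations, thanks to the cutoff $\Truca\in[0,1]$ and the polynomial form of $\mathcal{K}$. Since $\mathbf{u}\in C([0,T];X_N)$ lies in a finite-dimensional space spanned by smooth functions $\{\psi_m\}_{m=1}^N$, it is smooth in $x$ with $\|\mathbf{u}\|_{k,\infty}\leq C(N)\|\mathbf{u}\|_{C([0,T];X_N)}$ for every $k$. The natural strategy is: local existence and uniqueness via a Banach fixed-point argument, globalization through a $W^{s+1,2}$-energy estimate, and continuity in $\mathbf{u}$ by a difference estimate.

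For the fixed-point step, I would define $\Psi\colon Q\mapsto \tilde Q$, where $\tilde Q$ solves the linear inhomogeneous heat equation
\begin{equation*}
\partial_t \tilde Q-\Gamma L\triangle \tilde Q=-\Truca(\mathbf{u}\cdot\nabla_x Q-\Theta Q+Q\Theta)+\Truca \mathcal{K}(Q),\qquad \tilde Q(0)=Q_0.
\end{equation*}
Maximal $L^2$-regularity for the heat semigroup on $\mathbb{T}$ gives $\tilde Q\in C([0,T_*];W^{s+1,2})\cap L^2(0,T_*;W^{s+2,2})$ whenever the source lies in $L^2(0,T_*;W^{s,2})$. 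The Moser-type inequalities of Lemmas \ref{lem2.1}--\ref{lem2.2}, together with the bound $|\Truca|\leq 1$ and the Sobolev embedding $W^{s+1,2}\hookrightarrow W^{3,\infty}$ (valid since $s+1>9/2$), ensure that $\Psi$ maps a suitable ball of $C([0,T_*];W^{s+1,2})$ into itself and is a contraction once $T_*$ is small enough. To upgrade to the full interval $[0,T]$, I would apply $\partial_x^\alpha$ for $|\alpha|\leq s+1$ to \eqref{22.2}, test against $\partial_x^\alpha Q$, and use Lemmas \ref{lem2.1}--\ref{lem2.2} to absorb the commutators and nonlinear contributions into $C(R,N)(1+\|Q\|_{s+1,2}^2)+\tfrac{\Gamma L}{2}\|Q\|_{s+2,2}^2$, after which Gronwall concludes. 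That $Q$ remains $S_0^3$-valued follows because each right-hand side term preserves symmetry and tracelessness ($\Theta Q-Q\Theta$ is symmetric because $\Theta^\top=-\Theta$, $\mathcal{K}$ is built to take values in $S_0^3$, and transport and diffusion commute with taking trace).

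For the continuity assertion \eqref{3.4}, I would pick $\mathbf{u}_1,\mathbf{u}_2\in B$, set $\delta Q:=Q[\mathbf{u}_1]-Q[\mathbf{u}_2]$, subtract the two copies of \eqref{22.2}, and obtain a parabolic equation for $\delta Q$ whose source is linear in $\mathbf{u}_1-\mathbf{u}_2$ and $\delta Q$; a $W^{s+1,2}$-energy estimate followed by Gronwall yields the desired continuous dependence. The main obstacle I anticipate is controlling the differences of cutoffs $\Phi_R^{\mathbf{u}_1,Q_1}-\Phi_R^{\mathbf{u}_2,Q_2}$ that appear both here and in the contraction step: their Lipschitz character reduces matters to estimating $\|Q_1-Q_2\|_{3,\infty}$ and $\|\mathbf{u}_1-\mathbf{u}_2\|_{2,\infty}$, the former handled via the embedding $W^{s+1,2}\hookrightarrow W^{3,\infty}$ and the latter via finite-dimensionality of $X_N$, which is exactly what closes the loop.
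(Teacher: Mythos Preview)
Your proposal is correct and follows a genuinely different, though equally standard, route from the paper. The paper constructs $Q$ via a further Galerkin layer: it builds finite-dimensional approximations $Q_m$, derives the $W^{s+1,2}$ a priori bound by differentiating $|\alpha|\leq s$ times and testing against $-\triangle\partial_x^\alpha Q_m$ (rather than differentiating $|\alpha|\leq s+1$ times and testing against $\partial_x^\alpha Q$ as you do), and then passes to the limit via Aubin--Lions compactness. You instead work directly in the infinite-dimensional target space using maximal $L^2$-regularity for the heat semigroup and a Banach fixed point. Your route is shorter and avoids a second compactness argument, at the price of invoking maximal parabolic regularity as a black box; the paper's Galerkin route is more hands-on and mirrors the scheme used later for the full coupled system, where testing against $-\triangle\partial_x^\alpha Q$ is essential to trigger the cancellation of Lemma~\ref{lem2.4}. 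For the continuity statement and the $S_0^3$-invariance, both arguments are essentially the same: a $W^{s+1,2}$ difference estimate plus Gronwall, with the cutoff differences handled exactly as you describe via the mean value theorem and the embeddings $W^{s+1,2}\hookrightarrow W^{3,\infty}$, $X_N\hookrightarrow W^{2,\infty}$.
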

\begin{proof} Existence: \underline{Step 1}. Since the system \eqref{22.2} is a type of parabolic evolution system, we are able to establish the existence and uniqueness of finite-dimensional local approximate solutions $Q_m$ using the Galerkin method and the fixed point theorem, for further details, see \cite{chen, WD}. Then, we could extend the local solution to global in time using the following uniform a priori estimate.

\underline{Step 2}. Let $\alpha$ be a multi index such that $|\alpha|\leq s$. Taking $\alpha$-order derivative on both sides of the $m$-th order finite-dimensional approximate system of (\ref{22.2}), multiplying by $-\triangle \partial_x^\alpha Q_m$, then the trace and integrating over $\mathbb{T}$, we get
\begin{align}\label{4.8*}
&\quad\frac{1}{2}\frac{d}{dt}\|\partial_x^{\alpha+1}Q_m\|^2
    +\Gamma L\|\triangle\partial_x^\alpha Q_m\|^2\nonumber\\
    &=\Phi_R^{\mathbf{u},Q_m}(\partial_x^{\alpha+1}(\mathbf{u}\cdot \nabla_xQ_m-\Theta Q_m+Q_m\Theta), \partial_x^{\alpha+1}Q_m)\nonumber\\&\quad+\Phi_R^{\mathbf{u},Q_m}(\partial_x^{\alpha+1}\mathcal{K}(Q_m), \partial_x^{\alpha+1}Q_m).
\end{align}
For the first term on the right hand side of \eqref{4.8*}, using the H\"{o}lder inequality and Lemma \ref{lem2.1}, we obtain
\begin{align}\label{4.9*}
&\quad|\Phi_R^{\mathbf{u},Q_m}(\partial_x^{\alpha+1}(\mathbf{u}\cdot \nabla_xQ_m-\Theta Q_m+Q_m\Theta), \partial_x^{\alpha+1}Q_m)|\nonumber\\
&\leq C \Phi_R^{\mathbf{u},Q_m}\|\partial_x^{\alpha+1}Q_m\|\|\partial_x^{\alpha+1}(\mathbf{u}\cdot \nabla_xQ_m-\Theta Q_m+Q_m\Theta)\|\nonumber\\
&\leq C\Phi_R^{\mathbf{u},Q_m}\|\partial_x^{\alpha+1}Q_m\|(\|\mathbf{u}\|_{\infty}\|\partial_x^{\alpha+1}\nabla_x Q_m\|
+C\|\nabla_xQ_m\|_{\infty}\|\partial_x^{\alpha+1}\mathbf{u}\|\nonumber\\
&\qquad\qquad\qquad\qquad\quad+\|\nabla_x\mathbf{u}\|_{\infty}\|\partial_x^{\alpha+1}Q_m\|+\|Q_m\|_{\infty}\|\partial_x^{\alpha+1}\Theta\|)\nonumber\\
&\leq C\|\partial_x^{\alpha+1}Q_m\|^2+\frac{\Gamma L}{4}\|\partial_x^{\alpha+2}Q_m\|^2.
\end{align}
We only deal with the high-order term $Q_m{\rm tr}(Q_m^2)$ in $\mathcal{K}(Q_m)$, the rest of terms are trivial, using the H\"{o}lder inequality and Lemma \ref{lem2.1}, to get
\begin{align}\label{4.10*}
&\quad|\Phi_R^{\mathbf{u},Q_m}(\partial_x^{\alpha+1}(Q_m{\rm tr}(Q_m^2)), \partial_x^{\alpha+1}Q_m)|\nonumber\\
&\leq \Phi_R^{\mathbf{u},Q_m}\|\partial_x^{\alpha+1}Q_m\|\|\partial_x^{\alpha+1}(Q_m{\rm tr}(Q_m^2))\|\nonumber\\
&\leq \Phi_R^{\mathbf{u},Q_m}\|\partial_x^{\alpha+1}Q_m\|(\|\partial_x^{\alpha+1}Q_m\|\|Q_m\|^2_{\infty}+\|{\rm tr}(Q_m^2)\|_{\infty}\|\partial_x^{\alpha+1}Q_m\|)\nonumber\\
&\leq C\|\partial_x^{\alpha+1}Q_m\|^2.
\end{align}
Taking into account of \eqref{4.8*}-\eqref{4.10*}, taking sum of $|\alpha|\leq s$  and using the Gronwall lemma, we have
\begin{align}\label{4.11*}
\sup_{t\in [0,T]}\|Q_m\|_{s+1,2}^2
    +\int_{0}^{T}\Gamma L\| Q_m\|_{s+2,2}^2dt\leq C.
\end{align}
Then, using the estimate \eqref{4.11*}, it is also easily to show that
\begin{align}\label{4.12*}
\|Q_{m}\|_{W^{1,2}(0,T;L^2(\mathbb{T},S_0^3))}\leq C,
\end{align}
where the constant $C$ is independent of $m$.

\underline{Step 3}. Using the a priori estimates \eqref{4.11*}, \eqref{4.12*} and the Aubin-Lions lemma \ref{lem6.1}, we could show the compactness of the sequence of approximate solutions $Q_m$, actually the proof is easier than the argument of Lemma \ref{lem4.5}. Then, we could pass $m\rightarrow\infty$ to identify the limit, the proof is also easier than the argument in Subsection 4.4, here we omit it. This completes the proof of existence.

Uniqueness: The proof of uniqueness is similar to the following continuity argument.

 Next, we focus on showing the continuity of the mapping $\mathbf{u}\rightarrow Q[\mathbf{u}]$. Taking $\{\mathbf{u}_n\}_{n\geq 1}$ is a bounded sequence in
$C([0,T]; X_N(\mathbb{T}))$ with
\begin{equation}\label{22.5}
    \lim_{n\rightarrow \infty}\|\mathbf{u}_n-\mathbf{u}\|_{C([0,T];X_N(\mathbb{T}))}=0.
\end{equation}
Denote $Q_n=Q[\mathbf{u}_n]$, $Q=Q[\mathbf{u}]$, and $\bar{Q}_n=Q_n-Q$, then the continuity result \eqref{3.4} would follow if we could prove
\begin{equation}\label{22.6}
    \|\bar{Q}_n\|_{C([0,T];W^{s+1,2}(\mathbb{T},S_0^3))}^2+\|\bar{Q}_n\|_{L^2(0,T;W^{s+2,2}(\mathbb{T},S_0^3))}^2\leq C\sup_{t\in [0,T]}\|\mathbf{u}_n-\mathbf{u}\|_{X_N}^2.
\end{equation}
From (\ref{22.2}), we can get that $\bar{Q}_n$ satisfies the following system
\begin{equation}\label{22.7}
        \begin{cases}
        \frac{d}{dt}\bar{Q}_n-\Gamma L\triangle\bar{Q}_n\!\!\!\!\!&=\Phi_R^{\mathbf{u},Q}\big[(\mathbf{u}-\mathbf{u}_n)\cdot\nabla_xQ
        -\mathbf{u}_n\cdot\nabla_x\bar{Q}_n\\
        &+\Theta_n \bar{Q}_n-\bar{Q}_n\Theta_n+(\Theta_n-\Theta)Q-Q(\Theta_n-\Theta)\big]\\
        &+\left(\Phi_R^{\mathbf{u}_n,Q_n}-\Phi_R^{\mathbf{u},Q}\right)(\mathbf{u}_n\cdot\nabla_x Q_n-\Theta_n Q_n+Q_n\Theta_n)\\
        &+\Phi_R^{\mathbf{u},Q}(\mathcal{K}(Q_n)-\mathcal{K}(Q))\\
        &+\left(\Phi_R^{\mathbf{u}_n,Q_n}-\Phi_R^{\mathbf{u},Q}\right)\mathcal{K}(Q_n),~~\mbox{in } \mathbb{T}\times (0,T),\\
        \bar{Q}_n(0)=0.
        \end{cases}
    \end{equation}
Taking $\alpha$-order derivative on both sides of (\ref{22.7}) for $|\alpha|\leq s$, multiplying by $-\triangle \partial_x^\alpha\bar{Q}_n$, then the trace and integrating over $\mathbb{T}$, we arrive at
    \begin{align}\label{22.8}
    &\frac{1}{2}\frac{d}{dt}\|\partial_x^{\alpha+1}\bar{Q}_n\|^2
    +\Gamma L\|\triangle\partial_x^\alpha\bar{Q}_n\|^2\nonumber\\
    =&\int_{\mathbb{T}}\Phi_R^{\mathbf{u},Q}\partial_x^\alpha\bigg(\big[(\mathbf{u}-\mathbf{u}_n)\cdot\nabla_xQ
        -\mathbf{u}_n\cdot\nabla_x\bar{Q}_n\nonumber\\
        &+\Theta_n \bar{Q}_n-\bar{Q}_n\Theta_n+(\Theta_n-\Theta)Q-Q(\Theta_n-\Theta)\big]\nonumber\\
        &+\left(\Phi_R^{\mathbf{u}_n,Q_n}-\Phi_R^{\mathbf{u},Q}\right)(\mathbf{u}_n\cdot\nabla_x Q_n-\Theta_n Q_n+Q_n\Theta_n)\nonumber\\
        &+\Phi_R^{\mathbf{u},Q}(\mathcal{K}(Q_n)-\mathcal{K}(Q))\nonumber\\
        &+\left(\Phi_R^{\mathbf{u}_n,Q_n}-\Phi_R^{\mathbf{u},Q}\right)\mathcal{K}(Q_n)\bigg):(-\triangle\partial_x^{\alpha}\bar{Q}_n)dx\nonumber\\
    =&:I_1+ I_2 +I_3+ I_4.
    \end{align}
As $\{Q_n\}_{n\geq 1}$ and $Q$ are uniform bounded in
$$C([0,T];W^{s+1,2}(\mathbb{T},S_0^3))\cap L^2(0,T;W^{s+2,2}(\mathbb{T},S_0^3)).$$
We can estimate $I_1$ by Lemma \ref{lem2.1} and the H\"{o}lder inequality
\begin{align*}
 |I_1|&\leq \left\|\partial_x^\alpha\big[(\mathbf{u}-\mathbf{u}_n)\cdot\nabla_xQ
        -\mathbf{u}_n\cdot\nabla_x\bar{Q}_n+\Theta_n \bar{Q}_n-\bar{Q}_n\Theta_n+(\Theta_n-\Theta)Q-Q(\Theta_n-\Theta)\big]\right\|\\
        &\quad\times\|\triangle\partial_x^{\alpha}\bar{Q}_n\|\\
        &\leq C(\|\mathbf{u}-\mathbf{u}_n\|_{s,2}\|Q\|_{s+1,2}+\|\mathbf{u}_n\|_{s+1,2}\|\partial_x^{\alpha+1}\bar{Q}_n\|+\|\mathbf{u}-\mathbf{u}_n\|_{s+1,2}\|Q\|_{s,2})
        \|\triangle\partial_x^{\alpha}\bar{Q}_n\|\\
        &\leq \frac{\Gamma L}{4}\|\triangle\partial_x^{\alpha}\bar{Q}_n\|^2+C\|\partial_x^{\alpha+1}\bar{Q}_n\|^2+C\|\mathbf{u}-\mathbf{u}_n\|_{s+1,2}^2.
\end{align*}
For $I_2$, by Lemma \ref{lem2.1} and the H\"{o}lder inequality again, we have
\begin{align*}
  |I_2|&\leq C(\|\mathbf{u}-\mathbf{u}_n\|_{2,\infty}+\|\bar{Q}_n\|_{3,\infty})\|\partial_x^\alpha(\mathbf{u}_n\cdot\nabla_x Q_n-\Theta_n Q_n+Q_n\Theta_n)\|\|\triangle\partial_x^{\alpha}\bar{Q}_n\|\\
        &\leq \frac{\Gamma L}{4}\|\triangle\partial_x^{\alpha}\bar{Q}_n\|^2+C\|\partial_x^{\alpha+1}\bar{Q}_n\|^2+C\|\mathbf{u}-\mathbf{u}_n\|_{s+1,2}^2.
\end{align*}
Similarly, for terms $I_3, I_4$
\begin{align*}
  |I_3+I_4|&\leq \frac{\Gamma L}{4}\|\triangle\partial_x^{\alpha}\bar{Q}_n\|^2+C\|\partial_x^{\alpha+1}\bar{Q}_n\|^2+C\|\mathbf{u}-\mathbf{u}_n\|_{s+1,2}^2.
\end{align*}
Summing all the estimates up and taking sum for $|\alpha|\leq s$, we get
$$\frac{d}{dt}\|\bar{Q}_n\|_{{s+1,2}}^2+\frac{\Gamma L}{2}\|\bar{Q}_n\|_{{s+2,2}}^2
\leq C\|\bar{Q}_n\|_{{s+1,2}}^2+C\|\mathbf{u}_n-\mathbf{u}\|_{X_N}^2.$$
Applying the Gronwall lemma, then
$$\|\bar{Q}_n(t)\|_{{s+1,2}}^2+\frac{\Gamma L}{2}\int_0^t\|\bar{Q}_n\|_{{s+2,2}}^2d\xi
\leq Ce^{CT}\sup_{t\in [0,T]}\|\mathbf{u}_n-\mathbf{u}\|_{X_N}^2,$$
for any $t\in[0,T]$. So let $n \rightarrow\infty$, since $\sup_{t\in [0,T]}\|\mathbf{u}_n-\mathbf{u}\|_{X_N}^2\rightarrow 0$, then \eqref{22.6} follows.

Finally, we prove $Q\in S_0^3$, namely ${\rm tr}(Q)=0$ and $Q=Q^{\rm T}$ a.e in $\mathbb{T}\times [0,T]$. If we apply the transpose to the equation \eqref{22.2}(1), using the fact that $\|Q\|_{3,\infty}=\|Q^{{\rm T}}\|_{3,\infty}$, we have
$$(Q^{\rm T})_t+\Phi_{R}^{\mathbf{u}, Q^{{\rm T}}}(\mathbf{u}\cdot \nabla_x Q^{\rm T}-\Theta Q^{\rm T}+Q^{\rm T}\Theta)=\Gamma L \triangle Q^{{\rm T}}+\Phi_{R}^{\mathbf{u}, Q^{{\rm T}}}\mathcal{K}(Q^{\rm T}).$$
So $Q^{\rm T}$ also satisfies the equation. The uniqueness result leads to $Q=Q^{\rm T}$. The proof of ${\rm tr}(Q)=0$, we refer the reader to \cite{chen}.
\end{proof}

For $r_1=r[\mathbf{v}_1]$, $r_2=r[\mathbf{v}_2]$, $r_1-r_2$ satisfies
    \begin{align*}
      d(r_1&-r_2)+\mathbf{v}_1\cdot\nabla_x(r_1-r_2)dt-\frac{\gamma-1}{2}\Dv_x\mathbf{v}_1\cdot(r_1-r_2)dt\\
       &~\qquad=-\nabla_xr_2\cdot(\mathbf{v}_1-\mathbf{v}_2)dt-\frac{\gamma-1}{2}r_2\cdot\Dv_x(\mathbf{v}_1-\mathbf{v}_2)dt,
    \end{align*}
where $\mathbf{v}_1=\Phi_R^{\mathbf{u}_1,Q[\mathbf{u}_1]}\mathbf{u}_1$, $\mathbf{v}_2=\Phi_R^{\mathbf{u}_2,Q[\mathbf{u}_2]}\mathbf{u}_2$. Using the same argument as Breit-Feireisl-Hofmanov\'{a} \cite[Section 3.1]{breit} and the continuity of $Q[\mathbf{u}]$, see \eqref{3.4}, we are able to obtain the continuity of $r[\mathbf{u}]$ with respect to $\mathbf{u}\in C([0,T];X_N(\mathbb{T}))$, that is,
    \begin{equation}\label{22.11}
    \sup_{0\leq t\leq T}\|r[\mathbf{u}_1]-r[\mathbf{u}_2]\|^2\leq TC(N,R,T)\sup_{0\leq t\leq T}
    \|\mathbf{u}_1-\mathbf{u}_2\|_{X_N}^2.
    \end{equation}

We proceed to construct the approximate solution to the momentum equation. Let $\{\psi_m\}_{m=1}^\infty$ be an orthonormal basis of the space $H^1(\mathbb{T}, \mathbb{R}^3)$. Set the space
\begin{eqnarray}\label{4.12}
X_n=\mbox{span}\{\psi_1,\ldots ,\psi_n\}.
\end{eqnarray}
Let $P_{n}$ be an orthogonal projection from $H^1(\mathbb{T}, \mathbb{R}^3)$ into $X_n$.

We now find the approximate velocity field  $\mathbf{u}_n\in L^2(\Omega,C([0,T];X_n))$ to the following momentum equation
\begin{equation}\label{22.9}
        \begin{cases}
          d\langle\mathbf{u}_n,\psi_i\rangle+\Trucan \langle\mathbf{u}_n\nabla_x\mathbf{u}_n
          +r[\mathbf{u}_n]\nabla_xr[\mathbf{u}_n],\psi_i\rangle dt\\
          =\Trucan \langle D(r[\mathbf{u}_n])(\mathcal{L}\mathbf{u}_n-\Dv_x(L\nabla_x Q[\mathbf {u}_n]\odot \nabla_x Q[\mathbf {u}_n]
        -\mathcal{F}(Q[\mathbf {u}_n]){\rm I}_3)\nonumber\\
        \qquad\qquad +L\Dv_x(Q[\mathbf {u}_n]\triangle Q[\mathbf {u}_n]-\triangle Q[\mathbf {u}_n]Q[\mathbf {u}_n])),\psi_i\rangle dt\nonumber \\
          \quad+\Trucan \langle\mathbb{F}(r[\mathbf{u}_n],\mathbf{u}_n),\psi_i\rangle dW,
          i=1,\ldots,n \\
          \mathbf{u}_n(0)=P_n\mathbf{u}_0.
          \end{cases}
        \end{equation}
To handle the nonlinear $Q$-tensor terms and the noise term above, with the spirit of \cite{Hofmanova}, we define another $C^\infty$-smooth cut-off function
\begin{eqnarray*}
\Psi_K(z)=\left\{\begin{array}{ll}
1,~ |z|\leq K,\\
0,~ |z|>2K.
\end{array}\right.
\end{eqnarray*}
For any $\mathbf{v}=\sum_{i=1}^n \mathbf{v}_i\psi_i\in X_n$, define $\mathbf{v}^K=\sum_{i=1}^{n}\Psi_K(\mathbf{v}_i)\mathbf{v}_i\psi_i$, then we have $\|\mathbf{v}^K\|_{C([0,T];X_n)}\leq 2K$.

Define the mapping
    \begin{equation}\label{22.10}
        \begin{aligned}
          \langle\mathcal{T}[\mathbf{u}];\psi_i\rangle&=\langle\mathbf{u}^K_0;\psi_i\rangle- \int_{0}^{\cdot}\Phi_R^{\mathbf{u}^K,Q[\mathbf{u}^K]} \langle\mathbf{u}^K\nabla_x\mathbf{u}^K
          +r[\mathbf{u}^K]\nabla_xr[\mathbf{u}^K];\psi_i\rangle  dt\\
          &+\int_{0}^{\cdot}\Phi_R^{\mathbf{u}^K,Q[\mathbf{u}^K]} \langle D(r[\mathbf{u}^K])
          (\mathcal{L}\mathbf{u}^K-\Dv_x(L\nabla_x Q[\mathbf {u}^K]\odot \nabla_x Q[\mathbf {u}^K]-\mathcal{F}(Q[\mathbf {u}^K]){\rm I}_3)\\
          &\qquad\qquad+L\Dv_x(Q[\mathbf {u}^K]\triangle Q[\mathbf {u}^K]-\triangle Q[\mathbf {u}^K]Q[\mathbf {u}^K]));\psi_i\rangle dt\\
          &+\int_{0}^{\cdot}\Phi_R^{\mathbf{u}^K,Q[\mathbf{u}^K]} \langle\mathbb{F}(r[\mathbf{u}^K],\mathbf{u}^K);\psi_i\rangle dW,
          i=1,\ldots,n.
        \end{aligned}
        \end{equation}
 Next, we show that the mapping $\mathcal{T}$ is a contraction on $\mathcal{B}=L^2(\Omega;C([0,T^*];X_n))$ with fixed $K,n$ for $T^*$ small enough. Denote the right side of \eqref{22.10}  $\mathcal{T}_{det}$ as the deterministic part, and $\mathcal{T}_{sto}$
as the component $\int_{0}^{\cdot}\Phi_R^{\mathbf{u},Q}\langle\mathbb{F}(r[\mathbf{u}],\mathbf{u});\psi_i\rangle dW$ respectively.

Combining the assumption on initial data $Q_0$ and the definition of $\mathbf{u}^K$, we have after a easily calculation
\begin{align}\label{4.13*}
 \|Q[\mathbf{u}^K]\|_{C([0,T];W^{1,2})}\leq C(K,R),~ \mathbb{P} ~\mbox{a.s.}
\end{align}
Together estimates \eqref{2.1}, \eqref{4.4}, \eqref{4.13*}, the continuity results \eqref{22.11}, \eqref{3.4}  with the equivalence of norms on finite dimensional space $X_n$, we can show that the mapping $\mathcal{T}_{det}$ satisfies the estimate
    \begin{equation}\label{22.13}
    \|\mathcal{T}_{det}(\mathbf{u}_1)-\mathcal{T}_{det}(\mathbf{u}_2)\|_{\mathcal{B}}^2
    \leq T^*C(n,R,T,K)\|\mathbf{u}_1-\mathbf{u}_2\|_{\mathcal{B}}^2,
    \end{equation}
see also \cite{chen, WD} and using the Burkholder-Davis-Gundy inequality \eqref{2.5}, the mapping $\mathcal{T}_{sto}$ satisfies the estimate
\begin{align}\label{2.6}
&\quad\|\mathcal{T}_{sto}(\mathbf{u}_1)-\mathcal{T}_{sto}(\mathbf{u}_2)\|_{\mathcal{B}}^2\nonumber\\ &=\mathbb{E}\sup_{t\in [0,T^*]}\left\|\int_{0}^{t}\Phi_R^{{\mathbf{u}_1^K},Q[\mathbf{u}_1^K]} \mathbb{F}(r[\mathbf{u}_1^K],\mathbf{u}_1^K)-\Phi_R^{\mathbf{u}^K_2,Q[\mathbf{u}_2^K]} \mathbb{F}(r[\mathbf{u}_2^K],\mathbf{u}_2^K)dW \right\|^2_{X_n}\nonumber\\
&\leq C\mathbb{E}\int_{0}^{T^*}\left\|\Phi_R^{{\mathbf{u}_1^K},Q[\mathbf{u}_1^K]} \mathbb{F}(r[\mathbf{u}_1^K],\mathbf{u}_1^K)-\Phi_R^{\mathbf{u}^K_2,Q[\mathbf{u}_2^K]} \mathbb{F}(r[\mathbf{u}_2^K],\mathbf{u}_2^K)\right\|^2_{L_{2}(\mathfrak{U};X_n)}dt\nonumber\\
&\leq C\mathbb{E}\int_{0}^{T^*}\left|\Phi_R^{\mathbf{u}^K_1,Q[\mathbf{u}_1^K]}-\Phi_R^{\mathbf{u}^K_2,Q[\mathbf{u}_2^K]}\right|^2 \left\|\mathbb{F}(r[\mathbf{u}_1^K],\mathbf{u}_1^K)\right\|^2_{L_{2}(\mathfrak{U};X_n)}dt\nonumber\\
&\quad+C\mathbb{E}\int_{0}^{T^*}(\Phi_R^{\mathbf{u}^K_2,Q[\mathbf{u}_2^K]})^2
\left\|\mathbb{F}(r[\mathbf{u}_1^K],\mathbf{u}_1^K)-\mathbb{F}(r[\mathbf{u}_2^K],\mathbf{u}_2^K)\right\|^2_{L_{2}(\mathfrak{U};X_n)}dt\nonumber\\
&=:J_1+J_2.
\end{align}
Using the equivalence of norms on finite-dimensional space, assumption \eqref{2.5*} and the continuity result \eqref{22.11}, the bound \eqref{2.1}, we have
\begin{align}\label{4.22*}
J_2&\leq C\mathbb{E}\int_{0}^{T^*}(\Phi_R^{\mathbf{u}^K_2,Q[\mathbf{u}_2^K]})^2
\left\|\mathbb{F}(r[\mathbf{u}_1^K],\mathbf{u}_1^K)-\mathbb{F}(r[\mathbf{u}_2^K],\mathbf{u}_2^K)\right\|^2_{L_{2}(\mathfrak{U};L^2)}dt\nonumber\\
&\leq C\mathbb{E}\int_{0}^{T^*}(\Phi_R^{\mathbf{u}^K_2,Q[\mathbf{u}_2^K]})^2(\|r[\mathbf{u}_1^K], r[\mathbf{u}_2^K]\|^2_{1,\infty}+\|\mathbf{u}_1^K, \mathbf{u}_2^K\|^2_{2,\infty})\|r[\mathbf{u}_1^K]- r[\mathbf{u}_2^K], \mathbf{u}^K_1-\mathbf{u}^K_2\|^2dt\nonumber\\
&\leq T^*C(n,R,K,T)\|\mathbf{u}_1-\mathbf{u}_2\|_{\mathcal{B}}^2.
\end{align}
By the mean value theorem, the equivalence of norms on finite-dimensional space, assumption \eqref{2.5} and continuity result \eqref{3.4}, we also have
\begin{align}\label{4.22}
J_1\leq T^*C(n,K,T)\|\mathbf{u}_1-\mathbf{u}_2\|_{\mathcal{B}}^2.
\end{align}

Combining (\ref{4.13*})-(\ref{4.22}), we infer that there exists approximate solution sequence belonging to $L^{2}(\Omega; C([0,T_{*}]; X_{n}))$ to momentum equation for small time $T^{*}$ by the Banach fixed point theorem. Here we first assume that the estimates (\ref{4.20}),(\ref{4.21}) hold. Then, we could extend the existence time $T^*$ to any $T>0$ for any fixed $n,K$.

Next, we pass $K\rightarrow \infty$ to construct the approximate solution $(r_n, \mathbf{u}_n, Q_n)$ for any fixed $n$. Define the stopping time $\tau_K$
$$\tau_K=\inf\left\{t\in [0,T]; \sup_{\xi\in [0,t]}\left\|\mathbf{u}_n^K(\xi)\right\|_{X_n}\geq K\right\},$$
with the convention $\inf \varnothing=T$. Note that $\tau_{K_{1}}\geq \tau_{K_{2}}$ if $K_1\geq K_2$, due to the uniqueness, we have $(r_n^{K_1}, \mathbf{u}_n^{K_1}, Q_n^{K_1})=(r_n^{K_2}, \mathbf{u}_n^{K_2}, Q_n^{K_2})$ on the interval $[0, \tau_{K_2})$. Therefore, we can define $(r_n, \mathbf{u}_n, Q_n)=(r_n^{K}, \mathbf{u}_n^{K}, Q_n^{K})$ on interval $[0, \tau_{K})$. Note that
\begin{align*}
\mathbb{P}\left\{\sup_{K\in \mathbb{N}^+}\tau_{K}=T\right\}&=1-\mathbb{P}\left\{\left(\sup_{K\in \mathbb{N}^+}\tau_{K}=T\right)^c\right\}=1-\mathbb{P}\left\{\sup_{K\in \mathbb{N}^+}\tau_{K}<T\right\}\\
&\geq 1-\mathbb{P}\left\{\tau_{K}<T\right\}=1-\mathbb{P}\left\{\sup_{t\in [0,T]}\|\mathbf{u}_n^K\|_{X_n}\geq K\right\}.
\end{align*}
From the Chebyshev inequality, estimate (\ref{4.20}) and the equivalence of norms on finite dimensional space, we know
\begin{eqnarray*}
\lim_{K\rightarrow \infty}\mathbb{P}\left\{\sup_{t\in [0,T]}\|\mathbf{u}_n^K\|_{X_n}\geq K\right\}=0,
\end{eqnarray*}
which leads to
\begin{align*}
\mathbb{P}\left\{\sup_{K\in \mathbb{N}^+}\tau_{K}=T\right\}=1.
\end{align*}
As a result, we could extend the existence time interval $[0, \tau_{K})$ to $[0, T]$ for any $T>0$, obtaining the global existence of approximate solution sequence $(r_n, \mathbf{u}_n, Q_n)$.

{\bf 4.2 Uniform estimates}. In this subsection, we derive the a priori estimates that hold uniformly for $n\geq 1$, which allow us to extend the existence interval to any $T>0$ and provide a preliminary for our stochastic compactness argument.

 Taking $\alpha$-order derivative on both sides of system \eqref{qnt} in the x-variable for $|\alpha|\leq s$, then taking inner product with $\partial_x^\alpha r_n$ on both sides of equation (\ref{qnt})(1) and applying the It\^{o} formula to function $\|\partial_x^\alpha \mathbf{u}_n\|^2$, we obtain
        \begin{align}\label{3.1}
        &\frac{1}{2}d\|\partial_x^\alpha r_n\|^2 +\Trucan \left( \mathbf{u}_n\cdot\nabla_x \partial_x^\alpha r_n+ \frac{\gamma-1}{2}r_n\Dv_x\partial_x^\alpha\mathbf{u}_n, \partial_x^\alpha r_n \right) dt\nonumber\\
        =&\Trucan \left(\mathbf{u}_n\cdot  \partial_x^\alpha\nabla_x r_n
        -\partial_x^\alpha(\mathbf{u}_n\cdot\nabla_x r_n),\partial_x^\alpha r_n\right) dt\nonumber\\
        &+\frac{\gamma-1}{2}\Trucan \left( r_n \partial_x^\alpha\Dv_x\mathbf{u}_n
        -\partial_x^\alpha(r_n\Dv_x\mathbf{u}_n),\partial_x^\alpha r_n\right) dt\nonumber\\
        =&:\left( T_1^n dt+T_2^ndt,\partial_x^\alpha r_n\right),
        \end{align}
    and
        \begin{align}\label{3.2}
        &\frac{1}{2}d\|\partial_x^\alpha\mathbf{u}_n\|^2+\Trucan (\mathbf{u}_n\nabla_x\partial_x^\alpha\mathbf{u}_n+r_n\nabla_x\partial_x^\alpha r_n, \partial_x^\alpha \mathbf{u}_n) dt\nonumber\\
        &-\Trucan  (D(r_n)\mathcal{L}(\partial_x^\alpha\mathbf{u}_n),\partial_x^\alpha \mathbf{u}_n) dt\nonumber\\
        &+\Trucan  \left(D(r_n)\Dv_x\partial_x^\alpha\left(L\nabla_x Q_n\odot\nabla_x Q_n-\frac{L}{2}|\nabla_x Q_n|^2{\rm I}_3\right),\partial_x^\alpha \mathbf{u}_n\right) dt\nonumber\\
    &-\Trucan  \left(D(r_n)\Dv_x\partial_x^\alpha\left(\frac{a}{2}{\rm I}_3{\rm tr}(Q_n^2)-\frac{b}{3}{\rm I}_3{\rm tr}(Q_n^3)+\frac{c}{4}{\rm I}_3{\rm tr}^2(Q_n^2)\right), \partial_x^\alpha \mathbf{u}_n\right) dt\nonumber\\
    &-\Trucan (D(r_n)\Dv_x\left(L\partial_x^\alpha(Q_n\triangle Q_n-\triangle Q_n Q_n)\right),\partial_x^\alpha \mathbf{u}_n) dt\nonumber\\
          =&\Trucan \left(\mathbf{u}_n\partial_x^\alpha\nabla_x\mathbf{u}_n
          -\partial_x^\alpha(\mathbf{u}_n\nabla_x\mathbf{u}_n), \partial_x^\alpha \mathbf{u}_n\right) dt\nonumber\\
          &+\Trucan (r_n\partial_x^\alpha\nabla_xr_n-\partial_x^\alpha(r_n\nabla_xr_n), \partial_x^\alpha \mathbf{u}_n) dt\nonumber\\
          &-\Trucan (D(r_n)\partial_x^\alpha\mathcal{L}\mathbf{u}_n
          -\partial_x^\alpha(D(r_n)\mathcal{L}\mathbf{u}_n),\partial_x^\alpha \mathbf{u}_n) dt \nonumber\\
          &+\Trucan \bigg( D(r_n)\partial_x^\alpha\Dv_x\left(L\nabla_x Q_n\odot\nabla_x Q_n-\frac{L}{2}|\nabla_x Q_n|^2{\rm I}_3\right)\nonumber\\
    &\qquad\qquad-\partial_x^\alpha\left(D(r_n)\Dv_x\left(L\nabla_x Q_n\odot\nabla_x Q_n-\frac{L}{2}|\nabla_x Q_n|^2{\rm I}_3\right)\right),\partial_x^\alpha \mathbf{u}_n\bigg) dt\nonumber \\
          &-\Trucan \bigg(D(r_n)\partial_x^\alpha\Dv_x\left(\frac{a}{2}{\rm I}_3 {\rm tr}(Q_n^2)-\frac{b}{3}{\rm I}_3{\rm tr}(Q_n^3)+\frac{c}{4}{\rm I}_3 {\rm tr}^2(Q_n^2)\right)\nonumber\\
          &\qquad\qquad-\partial_x^\alpha\left(D(r_n)\Dv_x\left(\frac{a}{2}{\rm I}_3 {\rm tr}(Q_n^2)-\frac{b}{3}{\rm I}_3 {\rm tr}(Q_n^3)+\frac{c}{4}{\rm I}_3 {\rm tr}^2(Q_n^2)\right)\right),\partial_x^\alpha \mathbf{u}_n\bigg) dt\nonumber\\
          &-\Trucan \bigg(D(r_n)\partial_x^\alpha\Dv_xL(Q_n\triangle Q_n-\triangle Q_n Q_n)\nonumber\\
          &\qquad\qquad-\partial_x^\alpha(D(r_n)\Dv_xL(Q_n\triangle Q_n-\triangle Q_n Q_n)), \partial_x^\alpha \mathbf{u}_n\bigg) dt\nonumber\\
          &+\Trucan (\partial_x^\alpha\mathbb{F}(r_n,\mathbf{u}_n), \partial_x^\alpha \mathbf{u}_n ) dW+\frac{1}{2}(\Trucan)^2 \sum_{k\geq1} \int_{\mathbb{T}}|\partial_x^\alpha\mathbb{F}
        (r_n,\mathbf{u}_n)e_k|^2dxdt\nonumber\\
          =&:\sum_{i=3}^8 ( T_i^n,  \partial_x^\alpha \mathbf{u}_n) dt
          +\Trucan ( \partial_x^\alpha\mathbb{F}(r_n,\mathbf{u}_n), \partial_x^\alpha \mathbf{u}_n ) dW\nonumber\\&+\frac{1}{2}(\Trucan)^2 \sum_{k\geq1} \int_{\mathbb{T}}|\partial_x^\alpha\mathbb{F}
        (r_n,\mathbf{u}_n)e_k|^2dxdt.
        \end{align}
To handle the highest order term ${\rm div}_x(Q_n\triangle Q_n-\triangle Q_nQ_n)$, we multiply $-D(r_n)\triangle\partial_x^\alpha Q_n$ in equation \ref{qnt}(3) instead of $-\triangle\partial_x^\alpha Q_n$, then take the trace and integrate over $\mathbb{T}$, to get
        \begin{align}\label{33.9}
        &\frac{1}{2}d\|\sqrt{D(r_n)}\nabla_x\partial_x^\alpha Q_n\|^2-\frac{1}{2}\int_{\mathbb{T}}D(r_n)_t|\nabla_x\partial_x^\alpha Q_n|^2dxdt\nonumber\\
        &-\int_{\mathbb{T}}\nabla_xD(r_n)(\partial_x^\alpha Q_n)_t:\nabla_x\partial_x^\alpha Q_ndxdt
        +\Gamma L\|\sqrt{D(r_n)}\triangle\partial_x^\alpha Q_n\|^2dt\nonumber\\
        &-\Trucan\int_{\mathbb{T}}D(r_n)(\mathbf{u}_n\cdot\nabla_x \partial_x^\alpha Q_n):
        \triangle\partial_x^\alpha Q_ndxdt\nonumber\\
        &-\Trucan \int_{\mathbb{T}}D(r_n)((\partial_x^\alpha \Theta_n)Q_n-Q_n(\partial_x^\alpha \Theta_n)):
        \triangle\partial_x^\alpha Q_ndxdt\nonumber\\
        &-\Gamma\Trucan \int_{\mathbb{T}}D(r_n)\partial_x^\alpha\left(aQ_n-b\left(Q_n^2-\frac{{\rm I}_3}{3}{\rm tr}(Q_n^2)+cQ_n{\rm tr}(Q_n^2)\right)\right):
        \triangle\partial_x^\alpha Q_ndxdt\nonumber\\
        =&-\int_{\mathbb{T}}D(r_n)\Trucan (\mathbf{u}_n\cdot \partial_x^\alpha \nabla_xQ_n-\partial_x^\alpha(\mathbf{u}_n\cdot\nabla_xQ_n)):\triangle\partial_x^\alpha Q_ndxdt\nonumber\\
        &-\int_{\mathbb{T}}D(r_n)\Trucan ((\partial_x^\alpha \Theta_n)Q_n-Q_n(\partial_x^\alpha \Theta_n)-\partial_x^\alpha (\Theta_nQ_n-Q_n\Theta_n)):\triangle\partial_x^\alpha Q_ndxdt\nonumber\\
        =&:\int_{\mathbb{T}} (T_9+T_{10}):\triangle\partial_x^\alpha Q_ndxdt.
        \end{align}

 We next estimate all the right hand side terms. Using Lemma \ref{lem2.1} and the H\"{o}lder inequality,

    \begin{equation}\label{19}
        \begin{aligned}
        |\langle T_1^n, \partial_x^\alpha r_n\rangle|&\leq C\Trucan(\|\nabla_x\mathbf{u}_n\|_\infty\|\partial_x^\alpha r_n\|+\|\nabla_xr_n\|_\infty\|\partial_x^\alpha\mathbf{u}_n\|)\|\partial_x^\alpha r_n\|\\
        &\leq C(R)(\|\partial_x^\alpha r_n\|^2+\|\partial_x^\alpha\mathbf{u}_n\|^2),\\
        |\langle T_2^n,\partial_x^\alpha r_n\rangle|&\leq C\Trucan(\|\nabla_xr_n\|_\infty\|\partial_x^\alpha\mathbf{u}_n\|+\|\Dv_x\mathbf{u}_n\|_\infty\|\partial_x^\alpha r_n\|)\|\partial_x^\alpha r_n\|\\
        &\leq C(R)(\|\partial_x^\alpha\mathbf{u}_n\|^2+\|\partial_x^\alpha r_n\|^2).
        \end{aligned}
    \end{equation}
Also using Lemma \ref{lem2.1}, estimates (\ref{4.2}), (\ref{4.3})  and the H\"{o}lder inequality, we have the following estimates for $T_3^n$ to $T_{10}^n$
    \begin{align}\label{3.5}
|( T_3^n,\partial_x^\alpha \mathbf{u}_n)|
    &\leq C\Trucan \|\nabla_x\mathbf{u}_n\|_\infty\|\partial_x^\alpha\mathbf{u}_n\|^2\leq C(R)\|\partial_x^\alpha\mathbf{u}_n\|^2,\\
    |( T_4^n,\partial_x^\alpha \mathbf{u}_n)|
    &\leq C\Trucan \|\nabla_xr_n\|_\infty\|\partial_x^\alpha r_n\|\|\partial_x^\alpha\mathbf{u}_n\|\leq C(R)(\|\partial_x^\alpha r_n\|^2+\|\partial_x^\alpha\mathbf{u}_n\|^2),\\
    |( T_5^n,\partial_x^\alpha \mathbf{u}_n)|
    &\leq C\Trucan (\|\nabla_xD(r_n)\|_\infty\|\partial_x^{\alpha-1}\mathcal{L}\mathbf{u}_n\|
    +\|\mathcal{L}\mathbf{u}_n\|_\infty\|\partial_x^\alpha D(r_n)\|)\|\partial_x^\alpha\mathbf{u}_n\|\nonumber\\
    &\leq C(R)\Trucan (\|\partial_x^{\alpha+1}\mathbf{u}_n\|+\|\partial_x^\alpha r_n\|)\|\partial_x^\alpha\mathbf{u}_n\|\nonumber\\
    &\leq \frac{\nu}{8}\Trucan \|\sqrt{D(r_n)}\partial_x^{\alpha+1}\mathbf{u}_n\|^2+C(R)(\|\partial_x^\alpha r_n\|^2+\|\partial_x^\alpha\mathbf{u}_n\|^2),\\
    |( T_6^n,\partial_x^\alpha \mathbf{u}_n)|
    &\leq C\Trucan \left(\|\nabla_xD(r_n)\|_\infty \left\|\partial_x^\alpha\left(L\nabla_x Q_n\odot\nabla_x Q_n-\frac{L}{2}|\nabla_x Q_n|^2{\rm I}_3\right)\right\|\right)\|\partial_x^\alpha\mathbf{u}_n\|\nonumber\\
    &\quad+\left\|\Dv_x\left(L\nabla_x Q_n\odot\nabla_x Q_n-\frac{L}{2}|\nabla_x Q_n|^2{\rm I}_3\right)\right\|_\infty\|\partial_x^\alpha r_n\|\|\partial_x^\alpha\mathbf{u}_n\|\nonumber\\
    &\leq C\Trucan(\|\nabla_xD(r_n)\|_\infty\|\nabla_xQ_n\|_\infty\|\partial_x^{\alpha+1}Q_n\|\nonumber\\&\qquad\qquad\qquad+\|\nabla_xQ_n\|_\infty
    \|Q_n\|_{2,\infty}\|\partial_x^{\alpha}r_n\|)\|\partial_x^\alpha\mathbf{u}_n\|\nonumber\\
    &\leq C(R)(\|\partial_x^{\alpha+1}Q_n\|^2+\|\partial_x^{\alpha}r_n\|^2+\|\partial_x^\alpha\mathbf{u}_n\|^2),\\
    |( T_7^n, \partial_x^\alpha \mathbf{u}_n)|
    &\leq C\Trucan \bigg(\|\nabla_xD(r_n)\|_\infty\left\|\partial_x^{\alpha-1}\Dv_x\left(\frac{a}{2}{\rm I}_3{\rm tr}(Q_n^2)
    -\frac{b}{3}{\rm I}_3{\rm tr}(Q_n^3)+\frac{c}{4}{\rm I}_3{\rm tr}^2(Q_n^2)\right)\right\|\nonumber\\
    &\quad+\left\|\Dv_x\left(\frac{a}{2}{\rm I}_3{\rm tr}(Q_n^2)-\frac{b}{3}{\rm I}_3{\rm tr}(Q_n^3)+\frac{c}{4}{\rm I}_3{\rm tr}^2(Q_n^2)\right)\right\|_\infty\|\partial_x^{\alpha}r_n\|\bigg)
    \|\partial_x^\alpha\mathbf{u}_n\|\nonumber\\
     &\leq C\Trucan(\|Q_n\|_{\infty}^3+\|Q_n\|_{1,\infty})(\|\partial_x^{\alpha}Q_n\|^2+\|\partial_x^{\alpha}r_n\|^2+\|\partial_x^\alpha \mathbf{u}_n\|^2)\nonumber\\
    &\leq C(R)(\|\partial_x^{\alpha}Q_n\|^2
    +\|\partial_x^{\alpha}r_n\|^2+\|\partial_x^\alpha\mathbf{u}_n\|^2),\\
    |( T_8^n, \partial_x^\alpha \mathbf{u}_n)|
    &\leq C\Trucan (\|\nabla_xD(r_n)\|_\infty\|\partial_x^\alpha(Q_n\triangle Q_n-\triangle Q_n Q_n)\|\nonumber\\
    &\qquad\qquad+\|\Dv_x(Q_n\triangle Q_n-\triangle Q_n Q_n)\|_\infty\|\partial_x^\alpha D(r_n)\|)\|\partial_x^\alpha \mathbf{u}_n\|\nonumber\\
    &\leq C\Trucan(\|Q_n\|_\infty\|\triangle\partial_x^{\alpha}Q_n\|+\|Q_n\|_{2,\infty}\|\partial_x^{\alpha}Q_n\|)\|\partial_x^\alpha \mathbf{u}_n\|\nonumber\\
    &\quad+C\Trucan(\|\nabla_xQ_n\|_\infty\|Q_n\|_{2,\infty}+\|Q_n\|_\infty\|Q_n\|_{3,\infty})\|\partial_x^\alpha r_n\|\|\partial_x^\alpha \mathbf{u}_n\|\nonumber\\
    &\leq C(R)(\|\partial_x^{\alpha}Q_n\|^2+\|\partial_x^{\alpha}r_n\|^2+\|\partial_x^\alpha \mathbf{u}_n\|^2)+\frac{\Gamma L}{8}\|\sqrt{D(r_n)}\triangle\partial_x^{\alpha}Q_n\|^2,
    \end{align}
and
    \begin{align}\label{33.8}
    &\quad\left|\int_{\mathbb{T}} (T_9+T_{10}):\triangle\partial_x^\alpha Q_ndx\right|\nonumber\\&\leq C(R)\left\|\Trucan (\mathbf{u}_n\cdot \partial_x^\alpha \nabla_xQ_n-\partial_x^\alpha(\mathbf{u}_n\cdot\nabla_xQ_n))\right\|\|\sqrt{D(r_n)}\triangle\partial_x^\alpha Q_n\|\nonumber\\
    &\quad+C(R)\left\|\Trucan ((\partial_x^\alpha \Theta_n)Q_n-Q_n(\partial_x^\alpha \Theta_n)-\partial_x^\alpha (\Theta_nQ_n-Q_n\Theta_n))\right\|\|\sqrt{D(r_n)}\triangle\partial_x^\alpha Q_n\|\nonumber\\
    &\leq C(R)\Trucan (\|\nabla_x\mathbf{u}_n\|_\infty\|\partial_x^{\alpha}Q_n\|+\|\nabla_xQ_n\|_\infty\|\partial_x^{\alpha}\mathbf{u}_n\|)\|\sqrt{D(r_n)}\triangle\partial_x^\alpha Q_n\|\nonumber\\
    &\quad+ C(R)\Trucan (\|\nabla_xQ_n\|_\infty\|\partial_x^{\alpha}\mathbf{u}_n\|+\|\nabla_x\mathbf{u}_n\|_\infty\|\partial_x^{\alpha}Q_n\|)\|\sqrt{D(r_n)}\triangle\partial_x^\alpha Q_n\|\nonumber\\
    &\leq C(R)(\|\partial_x^{\alpha}Q_n\|^2+\|\partial_x^{\alpha}\mathbf{u}_n\|^2)+\frac{\Gamma L}{8}\|\sqrt{D(r_n)}\triangle\partial_x^{\alpha}Q_n\|^2.
    \end{align}
According to the assumption \eqref{2.5} on $\mathbb{G}$ and the Remark \ref{rem2.6}, we could have the estimate
        \begin{align}\label{33.12}
        &\sum_{k\geq1}\int_{0}^{t}(\Trucan)^2 \int_{\mathbb{T}}|\partial_x^\alpha\mathbb{F}(r_n,\mathbf{u}_n)e_{k}|^2dxd\xi\nonumber\\
        \leq &~C\int_{0}^{t}(\Trucan)^2 \int_{\mathbb{T}}\|r_n,\nabla_x\mathbf{u}_n\|_{1,\infty}^2 (\|\partial_x^\alpha r_n\|^2+\|\partial_x^\alpha\mathbf{u}_n\|^2)dxd\xi\nonumber\\
        \leq &~C(R)\int_{0}^{t}\int_{\mathbb{T}}(\|\partial_x^\alpha r_n\|^2+\|\partial_x^\alpha\mathbf{u}_n\|^2)dxd\xi.
        \end{align}

Next, we proceed to estimate the terms on the left hand side of (\ref{3.1})-(\ref{33.9}). Integration by parts, we get
\begin{align}
  &\left|\Trucan \int_{\mathbb{T}}\mathbf{u}_n\cdot\nabla_x \partial_x^\alpha r_n\partial_x^\alpha r_ndx\right|=
  \left|\frac{1}{2}\Trucan \int_{\mathbb{T}}\mathbf{u}_n\cdot\nabla_x (\partial_x^\alpha r_n)^2dx\right|\nonumber\\
  =&\left|\frac{1}{2}\Trucan \int_{\mathbb{T}}\Dv_x\mathbf{u}_n|\partial_x^\alpha r_n|^2dx\right|\leq C(R)\|\partial_x^\alpha r_n\|^2,
\end{align}
and
\begin{align}\label{4.9}
      &\Trucan\int_{\mathbb{T}}(\mathbf{u}_n\cdot
        \nabla_x\partial_x^\alpha\mathbf{u}_n+r_n\cdot\nabla_x\partial_x^\alpha r_n)\cdot\partial_x^\alpha\mathbf{u}_ndx\nonumber\\
      =&-\frac{1}{2}\Trucan\int_{\mathbb{T}}|\partial_x^\alpha\mathbf{u}_n|^2\Dv_x\mathbf{u}_ndx
      -\Trucan\int_{\mathbb{T}}r_n\Dv_x\partial_x^\alpha\mathbf{u}_n\partial_x^\alpha r_ndx\nonumber\\
      &-\Trucan\int_{\mathbb{T}}\nabla_xr_n\cdot\partial_x^\alpha\mathbf{u}_n\partial_x^\alpha r_ndx\nonumber\\
      \leq &~C(R)(\|\partial_x^\alpha\mathbf{u}_n\|^2+\|\partial_x^\alpha r_n\|^2)-\Trucan\int_{\mathbb{T}}r_n\Dv_x\partial_x^\alpha\mathbf{u}_n\partial_x^\alpha r_ndx,
    \end{align}
as well as we have by estimate (\ref{4.2})
    \begin{align}
      &\Trucan\int_{\mathbb{T}}D(r_n)\mathcal{L}(\partial_x^\alpha\mathbf{u}_n)\cdot \partial_x^\alpha\mathbf{u}_ndx\nonumber\\
      =&~\Trucan\int_{\mathbb{T}}D(r_n)(\upsilon\triangle\partial_x^\alpha\mathbf{u}_n+(\upsilon+\lambda)\nabla_x\Dv_x
      \partial_x^\alpha\mathbf{u}_n)\cdot \partial_x^\alpha\mathbf{u}_ndx\nonumber\\
      =&-\Trucan\int_{\mathbb{T}}D(r_n)(\upsilon|\nabla_x\partial_x^\alpha\mathbf{u}_n|^2+(\upsilon+\lambda)|\Dv_x\partial_x^\alpha\mathbf{u}_n|^2)dx\nonumber\\
      &-\Trucan\int_{\mathbb{T}}\nabla_x D(r_n)(\upsilon\nabla_x\partial_x^\alpha\mathbf{u}_n+(\upsilon+\lambda)\Dv_x
      \partial_x^\alpha\mathbf{u}_n)\partial_x^\alpha\mathbf{u}_ndx\nonumber\\
      \leq &-\Trucan\int_{\mathbb{T}}D(r_n)(\upsilon|\nabla_x\partial_x^\alpha\mathbf{u}_n|^2+(\upsilon+\lambda)|\Dv_x\partial_x^\alpha\mathbf{u}_n|^2)dx\nonumber\\
      &+\Trucan \|\nabla_x D(r_n)\|_{\infty}(\upsilon\|\nabla_x\partial_x^\alpha\mathbf{u}_n\|+(\upsilon+\lambda)\|\Dv_x\partial_x^\alpha\mathbf{u}_n\|)\|\partial_x^\alpha\mathbf{u}_n\|\nonumber\\
      \leq &-\Trucan\int_{\mathbb{T}}D(r_n)(\upsilon|\nabla_x\partial_x^\alpha\mathbf{u}_n|^2+(\upsilon+\lambda)|\Dv_x\partial_x^\alpha\mathbf{u}_n|^2)dx
      +C(R)\|\partial_x^\alpha\mathbf{u}_n\|^2\nonumber\\&+\frac{1}{8}\Trucan (\upsilon\|\sqrt{D(r_n)}\nabla_x\partial_x^\alpha\mathbf{u}_n\|^2+(\upsilon+\lambda)\|\sqrt{D(r_n)}\Dv_x\partial_x^\alpha\mathbf{u}_n\|^2).
    \end{align}
Also integration by parts, estimate (\ref{4.2}), Lemmas \ref{lem2.1},\ref{lem2.4} and the H\"{o}lder inequality give
    \begin{align}\label{4.11}
      &\Trucan\int_{\mathbb{T}}D(r_n)\Dv_x(L\partial_x^\alpha(Q_n\triangle Q_n-\triangle Q_n Q_n))\cdot \partial_x^\alpha\mathbf{u}_ndx\nonumber\\
      =&-L\Trucan\int_{\mathbb{T}}D(r_n)\partial_x^\alpha(Q_n\triangle Q_n-\triangle Q_n Q_n):\partial_x^\alpha\nabla_x\mathbf{u}_n^{{\rm T}}dx\nonumber\\
      &-L\Trucan\int_{\mathbb{T}}\nabla_x D(r_n)\partial_x^\alpha(Q_n\triangle Q_n-\triangle Q_n
      Q_n)\cdot\partial_x^\alpha\mathbf{u}_ndx\nonumber\\
      =&-L\Trucan\int_{\mathbb{T}}D(r_n)(Q_n\triangle \partial_x^\alpha Q_n-\triangle \partial_x^\alpha Q_n Q_n):\partial_x^\alpha\nabla_x\mathbf{u}_n^{{\rm T}}dx\nonumber\\
     & -L\Trucan\int_{\mathbb{T}}\nabla_x D(r_n)\partial_x^\alpha(Q_n\triangle Q_n-\triangle Q_n
      Q_n)\cdot\partial_x^\alpha\mathbf{u}_ndx\nonumber\\
      &-L\Trucan\int_{\mathbb{T}}D(r_n)(\partial_x^\alpha(Q_n\triangle Q_n-\triangle Q_n Q_n)-(Q_n\triangle \partial_x^\alpha Q_n-\triangle \partial_x^\alpha Q_n Q_n)):\partial_x^\alpha\nabla_x\mathbf{u}_n^{{\rm T}}dx\nonumber\\
      \leq&-L\Trucan\int_{\mathbb{T}}D(r_n)((\partial_x^\alpha\Theta_n)Q_n-Q_n(\partial_x^\alpha\Theta_n)):\triangle \partial_x^\alpha Q_ndx\nonumber\\
      &+C(R)\Trucan(\|\partial_x^\alpha(Q_n\triangle Q_n-\triangle Q_nQ_n)\|\|\partial_x^\alpha\mathbf{u}_n\|\nonumber\\
  &\quad\qquad+\|\partial_x^\alpha(Q_n\triangle Q_n-\triangle Q_n Q_n)-(Q_n\triangle \partial_x^\alpha Q_n-\triangle \partial_x^\alpha Q_n Q_n)\|\|\partial_x^{\alpha+1}\mathbf{u}_n\|)\nonumber\\
  \leq& -L\Trucan\int_{\mathbb{T}}D(r_n)((\partial_x^\alpha\Theta_n)Q_n-Q_n(\partial_x^\alpha\Theta_n)):\triangle \partial_x^\alpha Q_ndx\nonumber\\
  &+C(R)\Trucan(\|Q_n\|_\infty\|\triangle\partial_x^{\alpha}Q_n\|+\|Q_n\|_{2,\infty}\|\partial_x^{\alpha}Q_n\|)\|\partial_x^{\alpha}\mathbf{u}_n\|\nonumber\\
  &+ C(R)\Trucan(\|\nabla_xQ_n\|_\infty\|\partial_x^{\alpha+1}Q_n\|+\|Q_n\|_{2,\infty}\|\partial_x^{\alpha}Q_n\|)
  \|\partial_x^{\alpha+1}\mathbf{u}_n\|\nonumber\\
  \leq&-L\Trucan\int_{\mathbb{T}}D(r_n)((\partial_x^\alpha\Theta_n)Q_n-Q_n(\partial_x^\alpha\Theta_n)):\triangle \partial_x^\alpha Q_ndx\nonumber\\
  &+C(R)(\|\triangle\partial_x^{\alpha}Q_n\|+\|\partial_x^{\alpha}Q_n\|)\|\partial_x^{\alpha}\mathbf{u}_n\|
  +C(R)(\|\partial_x^{\alpha+1}Q_n\|+\|\partial_x^{\alpha}Q_n\|)\|\partial_x^{\alpha+1}\mathbf{u}_n\|.
    \end{align}
\begin{remark} Actually, Lemma \ref{lem2.4} requires that the symmetric matrices are $3\times 3$,
here, the $\triangle\partial_x^\alpha Q, \partial_x^\alpha \nabla \mathbf{u}^{\mathrm{T}}$ can be seen as the vector with each component is a $3\times 3$ matrix, therefore, we could apply Lemma \ref{lem2.4} to each component in above argument, adding them together then get the result.
\end{remark}
Also, using the estimate \eqref{4.2}, we have
\begin{align}
  & \left|\Trucan\int_{\mathbb{T}}D(r_n)\Dv_x\partial_x^\alpha\left(L\nabla_x Q_n\odot\nabla_x Q_n-\frac{L}{2}|\nabla_x Q_n|^2{\rm I}_3\right)\cdot \partial_x^\alpha\mathbf{u}_ndx \right|\nonumber\\
  =& ~\bigg|\Trucan\int_{\mathbb{T}}\nabla_x D(r_n)\partial_x^\alpha\left(L\nabla_x Q_n\odot\nabla_x Q_n-\frac{L}{2}|\nabla_x Q_n|^2{\rm I}_3\right)\cdot \partial_x^\alpha\mathbf{u}_ndx\nonumber\\
  &~+\Trucan\int_{\mathbb{T}}D(r_n)\partial_x^\alpha\left(L\nabla_x Q_n\odot\nabla_x Q_n-\frac{L}{2}|\nabla_x Q_n|^2{\rm I}_3\right): \partial_x^\alpha\nabla_x\mathbf{u}_n^{{\rm T}}dx\bigg|\nonumber\\
  \leq& ~C(R)\Trucan\|\partial_x^{\alpha+1}Q_n\|(\|\partial_x^{\alpha}\mathbf{u}_n\|+\|\partial_x^{\alpha+1}\mathbf{u}_n\|)\nonumber\\
  \leq &~C(R)(\|\partial_x^{\alpha+1}Q_n\|^2+\|\partial_x^{\alpha}\mathbf{u}_n\|^2)+\frac{\upsilon}{8}\Trucan\|\sqrt{D(r_n)}\partial_x^{\alpha+1}\mathbf{u}_n\|^2,
\end{align}
as well as
\begin{align}
   &\left|\Trucan\int_{\mathbb{T}}D(r_n)\Dv_x\partial_x^\alpha\left(\frac{a}{2}{\rm I}_3{\rm tr}(Q_n^2)
        -\frac{b}{3}{\rm I}_3{\rm tr}(Q_n^3)+\frac{c}{4}{\rm I}_3{\rm tr}^2(Q_n^2)\right)\cdot \partial_x^\alpha\mathbf{u}_ndx\right|\nonumber \\
  =&~ \bigg|\Trucan\int_{\mathbb{T}}\nabla_xD(r_n)\partial_x^\alpha\left(\frac{a}{2}{\rm I}_3{\rm tr}(Q_n^2)
        -\frac{b}{3}{\rm I}_3{\rm tr}(Q_n^3)+\frac{c}{4}{\rm I}_3{\rm tr}^2(Q_n^2)\right)\cdot \partial_x^\alpha\mathbf{u}_ndx\nonumber\\
  &~+\Trucan\int_{\mathbb{T}}D(r_n)\partial_x^\alpha\left(\frac{a}{2}{\rm I}_3{\rm tr}(Q_n^2)
        -\frac{b}{3}{\rm I}_3{\rm tr}(Q_n^3)+\frac{c}{4}{\rm I}_3{\rm tr}^2(Q_n^2)\right): \partial_x^\alpha\nabla_x\mathbf{u}_n^{{\rm T}}dx\bigg|\nonumber\\
\leq &~ C\Trucan (\|D(r_n)\|_{1,\infty}+\|D(r_n)\|_{\infty})(1+\|Q_n\|_\infty^3)\|\partial_x^{\alpha}Q_n\|(\|\partial_x^{\alpha}\mathbf{u}_n\|+\|\partial_x^{\alpha+1}\mathbf{u}_n\|)\nonumber\\
\leq &~ C(R)(\|\partial_x^{\alpha}Q_n\|^2+\|\partial_x^{\alpha}\mathbf{u}_n\|^2)+\frac{\upsilon}{8}\Trucan\| \sqrt{D(r_n)}\partial_x^{\alpha+1}\mathbf{u}_n\|^2.
\end{align}

 According to equation \eqref{qnt}(1), we have the estimate of $D(r_n)_t$
\begin{align}\label{4.30}
  \|D(r_n)_t\|_\infty&=\left\|\frac{\rho[r_n]_t}{\rho[r_n]^2}\right\|_\infty\leq C(R)\Trucan\|\Dv_x(\rho[r_n] \mathbf{u}_n)\|_\infty \nonumber\\
  &\leq C(R)\Trucan(\|r_n\|_\infty\|\Dv_x\mathbf{u}_n\|_\infty+\|\nabla_xr_n\|_\infty\|\mathbf{u}_n\|_\infty)\leq C(R).
\end{align}
Considering equation \eqref{22.2},  by Lemma \ref{lem2.1}, we can get the estimate of $(\partial_x^\alpha Q_n)_t$ as follows
\begin{align}\label{4.31}
\|(\partial_x^\alpha Q_n)_t\|&=\bigg \|\Gamma L\triangle \partial_x^\alpha Q_n+\Trucan\bigg[-\partial_x^\alpha(\mathbf{u}_n\cdot\nabla_xQ_n)
+\partial_x^\alpha (\Theta_nQ_n-Q_n\Theta_n)\nonumber\\
&\quad-\Gamma\partial_x^\alpha\left(aQ_n-b\left(Q_n^2-\frac{{\rm I}_3}{3}{\rm tr}(Q_n^2)\right)+cQ_n{\rm tr}(Q_n^2)\right)\bigg]\bigg\|\nonumber\\
&\leq C(R)(\|\partial_x^{\alpha}Q_n\|+\|\partial_x^{\alpha+1}Q_n\|+\|\partial_x^{\alpha}\mathbf{u}_n\|+\|\triangle\partial_x^{\alpha}Q_n\|).
\end{align}
The above two estimates combine with (\ref{4.2}), yielding
\begin{align}
   &\quad \left|\frac{1}{2}\int_{\mathbb{T}}D(r_n)_t|\nabla_x\partial_x^\alpha Q_n|^2dx
        +\int_{\mathbb{T}}\nabla_xD(r_n)(\partial_x^\alpha Q_n)_t:\nabla_x\partial_x^\alpha Q_ndx \right|\nonumber\\
   &\leq C\|D(r_n)_t\|_\infty\|\partial_x^{\alpha+1}Q_n\|^2
   +C\|\nabla_xD(r_n)\|_\infty\|(\partial_x^\alpha Q_n)_t\|\|\partial_x^{\alpha+1}Q_n\|\nonumber\\
   &\leq C(R)\|\partial_x^{\alpha+1}Q_n\|^2
   +C(R)(\|\partial_x^{\alpha+1}Q_n\|+\|\partial_x^{\alpha}\mathbf{u}_n\|+\|\triangle\partial_x^{\alpha}Q_n\|)\|\partial_x^{\alpha+1}Q_n\|\nonumber\\
   &\leq C(R)(\|\partial_x^{\alpha+1}Q_n\|^2+\|\partial_x^{\alpha}\mathbf{u}_n\|^2)
   +\frac{\Gamma L}{8}\|\sqrt{D(r_n)}\triangle\partial_x^{\alpha}Q_n\|^2.
\end{align}
In addition, we also have
\begin{align}\label{4.15*}
    &\Trucan\left|\int_{\mathbb{T}}D(r_n)(\mathbf{u}_n\cdot\nabla_x \partial_x^\alpha Q_n):
    \triangle\partial_x^\alpha Q_ndx\right|\nonumber\\
    \leq & ~\Trucan\|D(r_n)\|_\infty\|\mathbf{u}_n\|_\infty\|\partial_x^{\alpha+1}Q_n\|\|\triangle\partial_x^{\alpha}Q_n\|\nonumber\\
    \leq &~C(R)\|\partial_x^{\alpha+1}Q_n\|^2+\frac{\Gamma L}{8}\|\sqrt{D(r_n)}\triangle\partial_x^{\alpha}Q_n\|^2,
\end{align}
and
\begin{align}\label{4.15}
     &\Trucan \left|\int_{\mathbb{T}}D(r_n)\partial_x^\alpha\left(aQ_n-b\left(Q_n^2-\frac{{\rm I}_3}{3}{\rm tr}(Q_n^2)\right)+cQ_n{\rm tr}(Q_n^2)\right):
    \triangle\partial_x^\alpha Q_ndx\right|\nonumber\\
    \leq &~ \Trucan \|D(r_n)\|_{\infty}\|\triangle\partial_x^\alpha Q_n\|\left\|\partial_x^\alpha\left(aQ_n-b\left(Q_n^2-\frac{{\rm I}_3}{3}{\rm tr}(Q_n^2)\right)+cQ_n{\rm tr}(Q_n^2)\right)\right\|\nonumber\\
    \leq &~ C(R)\Trucan (\|Q_n\|_\infty+\|Q_n\|_\infty^2)\|\partial_x^\alpha Q_n\|\|\triangle\partial_x^{\alpha}Q_n\|\nonumber\\
    \leq &~C(R)\|\partial_x^{\alpha+1}Q_n\|^2+\frac{\Gamma L}{8}\|\sqrt{D(r_n)}\triangle\partial_x^{\alpha}Q_n\|^2,
\end{align}
in the last step, we also use the estimate \eqref{4.2}. Summing all the estimates (\ref{19})-(\ref{4.15}), note that the first term in (\ref{4.11}) was cancelled with the forth integral on the left hand side of (\ref{33.9}), also the second term in (\ref{4.9}) was cancelled with the second term on the left hand side of \eqref{3.1} after matching the constant, we conclude
        \begin{align}\label{33.11}
         &d(\|\partial_x^\alpha r_n(t)\|^2+\|\partial_x^\alpha\mathbf{u}_n(t)\|^2+\|\sqrt{D(r_n)}\nabla_x\partial_x^\alpha Q_n(t)\|^2)\nonumber\\
         &+\Trucan \int_{\mathbb{T}}D(r_n)(\upsilon|\nabla_x\partial_x^\alpha\mathbf{u}_n|^2+(\upsilon+\lambda)
        |\Dv_x\partial_x^\alpha\mathbf{u}_n|^2)dxdt\nonumber\\
        &+\Gamma L\|\sqrt{D(r_n)}\triangle\partial_x^\alpha Q_n\|^2dt\nonumber\\
        \leq&~C(R)(\|\partial_x^{\alpha}r_n\|^2+\|\partial_x^{\alpha}\mathbf{u}_n\|^2+\|\sqrt{D(r_n)}\partial_x^{\alpha+1}Q_n\|^2)
        dt\nonumber\\
        &+\Trucan \int_{\mathbb{T}}\partial_x^\alpha\mathbb{F}(r_n,\mathbf{u}_n)
        \cdot\partial_x^\alpha\mathbf{u}_ndx dW.
        \end{align}

Define the stopping time $\tau_{M}$
\begin{eqnarray*}
\tau_{M}=\inf\left\{t\geq 0; \sup_{\xi\in [0,t]}\| r_n(\xi), \mathbf{u}_n(\xi)\|_{s,2}^2\geq M\right\},
\end{eqnarray*}
if the set is empty, choosing $\tau_M=T$. Then, taking sum for $|\alpha|\leq s$, taking integral with respect to time and sumpremum on interval $[0, t\wedge \tau_M]$, power $p$, finally expectation on both sides of (\ref{33.11}), we arrive at
        \begin{align}\label{4.39}
         &\mathbb{E}\left[\sup_{\xi\in [0, t\wedge \tau_M]}(\|\partial_x^s r_n\|^2+\|\partial_x^s\mathbf{u}_n\|^2+\|\sqrt{D(r_n)}\nabla_x\partial_x^s Q_n\|^2)\right]^p\nonumber\\
         &+\mathbb{E}\left(\int_{0}^{t\wedge \tau_M}\Trucan \int_{\mathbb{T}}D(r_n)(\upsilon|\nabla_x\partial_x^s\mathbf{u}_n|^2+(\upsilon+\lambda)
        |\Dv_x\partial_x^s\mathbf{u}_n|^2)dxd\xi\right)^p\nonumber\\
        &+\mathbb{E}\left(\int_{0}^{t\wedge \tau_M}\Gamma L\|\sqrt{D(r_n)}\triangle\partial_x^s Q_n\|^2d\xi\right)^p\nonumber\\
        \leq&~C\mathbb{E}(\|r_0,\mathbf{u}_0\|_{s,2}^{2}+\|Q_0\|_{s+1,2}^{2})^p\nonumber\\
        &+C\mathbb{E}\left(\int_{0}^{t\wedge \tau_M}\|\partial_x^{s}r_n\|^2+\|\partial_x^{s}\mathbf{u}_n\|^2+\|\sqrt{D(r_n)}\partial_x^{s+1}Q_n\|^2
        d\xi\right)^p\nonumber\\
        &+C\mathbb{E}\left[\sup_{\xi\in [0, t\wedge \tau_M]}\left|\int_{0}^{\xi}\Trucan \int_{\mathbb{T}}\partial_x^s\mathbb{F}(r_n,\mathbf{u}_n)
        \cdot\partial_x^s\mathbf{u}_ndx dW\right|\right]^p.
        \end{align}
Regarding the stochastic integral term, we could apply the Burkholder-Davis-Gundy inequality (\ref{2.4}) and assumption \eqref{2.5}(Remark \ref{rem2.6}), for any $1\leq p <\infty$
        \begin{align}\label{33.13}
        &\quad\mathbb{E}\left[\sup_{\xi\in [0, t\wedge \tau_M]}\left|\int_{0}^{\xi}\Trucan \int_{\mathbb{T}}\partial_x^s
        \mathbb{F}(r_n,\mathbf{u}_n)\cdot\partial_x^s\mathbf{u}_n dx dW\right|\right]^p\nonumber\\
        &\leq C(p)\mathbb{E}\left[\int_{0}^{ t\wedge \tau_M}(\Trucan )^2\|
        \mathbb{F}(r_n,\mathbf{u}_n)\|_{L_2(\mathfrak{U}; W^{s,2}(\mathbb{T}))}^2 \|\mathbf{u}_n\|_{s,2}^2d\xi\right]^{\frac{p}{2}}\nonumber\\
        &\leq C(p)\mathbb{E}\left[\int_{0}^{t\wedge \tau_M}(\Trucan )^2\|r_n,\nabla\mathbf{u}_n\|_{1,\infty}^2 \|r_n,\mathbf{u}_n\|_{s,2}^4d\xi\right]^{\frac{p}{2}}\nonumber\\
        &\leq C(p,R)\mathbb{E}\left[\sup_{\xi\in [0, t\wedge \tau_M]}\|r_n,\mathbf{u}_n\|_{s,2}^2\int_{0}^{t\wedge \tau_M}
        \|r_n,\mathbf{u}_n\|_{s,2}^2d\xi\right]^{\frac{p}{2}}\nonumber\\
        &\leq \frac{1}{2}\mathbb{E}\left[\sup_{\xi\in [0, t\wedge \tau_M]}\|r_n,\mathbf{u}_n\|_{s,2}^{2p}\right]
        +C(p,R)\mathbb{E}\left[\int_{0}^{t\wedge \tau_M}\|r_n,\mathbf{u}_n\|_{s,2}^2d\xi\right]^{p}.
        \end{align}
Combining (\ref{4.39})-(\ref{33.13}), the Gronwall lemma gives
\begin{align}\label{4.19}
&\mathbb{E}\left[\sup_{\xi\in [0, t\wedge \tau_M]}(\|\partial_x^s r_n\|^2+\|\partial_x^s\mathbf{u}_n\|^2+\|\sqrt{D(r_n)}\nabla_x\partial_x^s Q_n\|^2)\right]^p\nonumber\\
         &+\mathbb{E}\left(\int_{0}^{t\wedge \tau_M}\Trucan \int_{\mathbb{T}}D(r_n)(\upsilon|\nabla_x\partial_x^s\mathbf{u}_n|^2+
        (\upsilon+\lambda)|\Dv_x\partial_x^s\mathbf{u}_n|^2)dxd\xi\right)^p\nonumber\\
        &+\mathbb{E}\left(\int_{0}^{t\wedge \tau_M}\Gamma L\|\sqrt{D(r_n)}\triangle\partial_x^s Q_n\|^2d\xi\right)^p\leq C,
\end{align}
where the constant $C$ is independent of $n$, but depends on $(s, p, R, \mathbb{T},T)$ and the initial data. Taking $M\rightarrow \infty$ in (\ref{4.19}), using the fact that $\frac{1}{C(R)}\leq D(r_n)\leq C(R)$ and the monotone convergence theorem, we establish the a priori estimates
\begin{eqnarray}
&&r_n\in L^p(\Omega; L^\infty(0,T;W^{s,2}(\mathbb{T}))),~~ \mathbf{u}_n\in L^p(\Omega; L^\infty(0,T;W^{s,2}(\mathbb{T},\mathbb{R}^3))),\label{4.20}\\
&& Q_n\in L^p(\Omega; L^\infty(0,T;W^{s+1,2}(\mathbb{T},S_0^3))\cap L^2(0,T;W^{s+2,2}(\mathbb{T},S_0^3))),\label{4.21}
\end{eqnarray}
for all $ 1\leq p<\infty$, integer $s> \frac{7}{2}$.

{\bf 4.3 Compactness argument}. Let $\{r_{n}, \mathbf{u}_{n}, Q_n\}_{n\geq 1}$ be the sequence of approximate solution to system (\ref{qnt}) relative to the fixed stochastic basis $(\Omega, \mathcal{F},\{\mathcal{F}_{t}\}_{t\geq0}, \mathbb{P}, W)$ and $\mathcal{F}_{0}$-measurable random variable $(r_0, \mathbf{u}_{0}, Q_{0})$. We define the path space
\begin{equation*}
\mathcal{X}=\mathcal{X}_{r}\times \mathcal{X}_{\mathbf{u}}\times \mathcal{X}_{Q}\times \mathcal{X}_{W},
\end{equation*}
where
\begin{eqnarray*}
&&\mathcal{X}_{r}=C([0,T]; W^{s-1,2}(\mathbb{T})),~\mathcal{X}_{\mathbf{u}}=L^\infty(0,T; W^{s-\varepsilon,2}(\mathbb{T},\mathbb{R}^3)),\\
 &&\mathcal{X}_{Q}=C([0,T]; W^{s,2}(\mathbb{T},S_0^3))\cap L^{2}(0,T;W^{s+1,2}(\mathbb{T},S_0^3)),~~\mathcal{X}_{W}=C([0,T];\mathfrak{U}_0),
\end{eqnarray*}
where $\varepsilon$ is small enough such that integer $s-\varepsilon>\frac{3}{2}+2$.

Define the sequence of probability measures
\begin{equation}\label{measure}
\mu^{n}=\mu^{n}_{r}\otimes\mu^{n}_{\mathbf{u}}\otimes \mu^{n}_{Q}\otimes \mu_{W},
\end{equation}
where $\mu^{n}_{r}(\cdot)=\mathbb{P}\{r_{n}\in \cdot\}$, ~$\mu^{n}_{\mathbf{u}}(\cdot)=\mathbb{P}\{\mathbf{u}_{n}\in \cdot\}$,~~$\mu^{n}_{Q}(\cdot)=\mathbb{P}\{Q_{n}\in \cdot\}$,~$\mu_{W}(\cdot)=\mathbb{P}\{W\in \cdot\}$. We show that the set $\{\mu^{n}\}_{n\geq 1}$ is in fact weakly compact.  According to the Prokhorov theorem, it suffices to show that each set $\{\mu^{n}_{(\cdot)}\}_{n\geq 1}$ is tight on the corresponding path space $\mathcal{X}_{(\cdot)}$.
\begin{lemma}\label{lem3.2}
 The set of the sequence of measures $\{\mu^{n}_\mathbf{u}\}_{n\geq 1}$ is tight on path space $\mathcal{X}_\mathbf{u}$.
\end{lemma}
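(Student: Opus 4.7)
The plan is to follow the standard stochastic compactness route: exhibit, for every $\eta>0$, a set $K_\eta$ that is compact in $\mathcal{X}_\mathbf{u}=L^\infty(0,T;W^{s-\varepsilon,2}(\mathbb{T},\mathbb{R}^3))$ and for which $\mu^n_\mathbf{u}(K_\eta)>1-\eta$ uniformly in $n$. The $K_\eta$ will be a ball measured in two different scales: a strong ``spatial'' scale coming from the uniform estimate \eqref{4.20}, and a fractional ``temporal'' scale extracted from the evolution equation for $\mathbf{u}_n$. The main obstacle is the latter fractional-time bound for the stochastic integral; the rest is a Simon-type compactness argument followed by Chebyshev.

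First I would read off from \eqref{4.20} that $\mathbb{E}\|\mathbf{u}_n\|_{L^\infty(0,T;W^{s,2})}^p\leq C$ for every $1\leq p<\infty$, uniformly in $n$. Then I would split the integrated form of the momentum equation in \eqref{qnt} into a drift part and a stochastic integral part:
\begin{equation*}
\mathbf{u}_n(t)=\mathbf{u}_n(0)+\int_0^t F_n(\xi)\,d\xi+\int_0^t \Trucan\,\mathbb{F}(r_n,\mathbf{u}_n)\,dW,
\end{equation*}
where $F_n$ collects the convective, pressure, Lam\'e and $Q$-tensor terms together with their factor $\Trucan D(r_n)$. Using Lemma~\ref{lem2.1}, the bounds \eqref{4.2}--\eqref{4.3} on $D(r_n)$, the cut-off, and the uniform estimates \eqref{4.20}--\eqref{4.21}, each summand of $F_n$ can be controlled in, say, $W^{s-2,2}(\mathbb{T})$ uniformly in $t\in[0,T]$ and $\omega\in\Omega$. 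This yields
\begin{equation*}
\left\|\int_0^\cdot F_n(\xi)\,d\xi\right\|_{L^p(\Omega;W^{1,\infty}(0,T;W^{s-2,2}))}\leq C.
\end{equation*}
For the stochastic integral, the Burkholder--Davis--Gundy inequality \eqref{2.4}, assumption \eqref{2.5} (via Remark~\ref{rem2.6}), and the cut-off will yield, for every $\alpha\in(0,\tfrac12)$ and every $p\geq 2$,
\begin{equation*}
\mathbb{E}\left\|\int_0^\cdot \Trucan\,\mathbb{F}(r_n,\mathbf{u}_n)\,dW\right\|_{W^{\alpha,p}(0,T;W^{s,2})}^p\leq C.
\end{equation*}

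Combining the two gives, uniformly in $n$, a bound
\begin{equation*}
\mathbb{E}\Bigl(\|\mathbf{u}_n\|_{L^\infty(0,T;W^{s,2})}^p+\|\mathbf{u}_n\|_{W^{\alpha,p}(0,T;W^{s-2,2})}^p\Bigr)\leq C.
\end{equation*}
Fixing $\alpha\in(0,\tfrac12)$ and $p\geq 2$ with $\alpha p>1$, the embedding \eqref{2.31} gives $W^{\alpha,p}(0,T;W^{s-2,2})\hookrightarrow C([0,T];W^{s-2,2})$; combined with the compact embedding $W^{s,2}\hookrightarrow\hookrightarrow W^{s-\varepsilon,2}\hookrightarrow W^{s-2,2}$ and the Aubin--Lions--Simon lemma, the set
\begin{equation*}
B_R:=\Bigl\{\mathbf{v}:\|\mathbf{v}\|_{L^\infty(0,T;W^{s,2})}+\|\mathbf{v}\|_{W^{\alpha,p}(0,T;W^{s-2,2})}\leq R\Bigr\}
\end{equation*}
is relatively compact in $C([0,T];W^{s-\varepsilon,2})$, hence in $\mathcal{X}_\mathbf{u}$. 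Chebyshev's inequality then provides $\mu^n_\mathbf{u}(B_R^c)\leq C/R^p$ uniformly in $n$, so $K_\eta:=\overline{B_R}$ with $R=R(\eta)$ large enough delivers the tightness. The only technically delicate step is the fractional-in-time estimate for the stochastic integral; everything else is mechanical once the a priori bounds in \eqref{4.20}--\eqref{4.21} are available.
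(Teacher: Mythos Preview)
Your proposal is correct and follows essentially the same route as the paper: split $\mathbf{u}_n$ into a drift and a stochastic part, extract uniform fractional time-regularity for each, combine with the $L^\infty_t W^{s,2}_x$ bound from \eqref{4.20}, invoke Aubin--Lions, and finish with Chebyshev. The only noteworthy difference is the choice of temporal scale: the paper works with $C^\alpha([0,T];L^2)$ (estimating the H\"older seminorm of the stochastic integral directly via BDG), whereas you use the Flandoli--Gatarek-style $W^{\alpha,p}(0,T;W^{s-2,2})$ bound and then embed into $C^{\alpha-1/p}$ via \eqref{2.31}; both choices feed into the same compact embedding in Lemma~\ref{lem6.1}. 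One minor slip: the drift $F_n$ is not bounded ``uniformly in $\omega\in\Omega$'' but only in $L^p(\Omega;L^\infty_t W^{s-2,2}_x)$---this is what \eqref{4.20}--\eqref{4.21} actually give, and it is all you need for the $L^p(\Omega;W^{1,\infty}(0,T;W^{s-2,2}))$ conclusion you state next.
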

\begin{proof}
First, we show that for any $\alpha\in [0,\frac{1}{2})$
\begin{eqnarray}\label{4.45}
\mathbb{E}\|\mathbf{u}_n\|_{C^\alpha([0,T];L^2(\mathbb{T}, \mathbb{R}^3))}\leq C,
\end{eqnarray}
where $C$ is independent of $n$.

Decompose $\mathbf{u}_n=X_n+Y_n$, where
\begin{eqnarray*}
&&X_n=X_n(0)+\int_{0}^{t}-\Trucan P_n(\mathbf{u}_n\nabla_x\mathbf{u}_n+r_n\nabla_xr_n)
    +\Trucan P_n(D(r_n)(\mathcal{L}\mathbf{u}_n\\&&~\qquad-\Dv_x(L\nabla_x Q_n\odot \nabla_x Q_n
        -\mathcal{F}(Q_n){\rm I}_3)+L\Dv_x(Q_n\triangle Q_n- \triangle Q_nQ_n)))d\xi,\\
&&Y_n=\int_{0}^{t}\Trucan P_n\mathbb{F}(r_n,\mathbf{u}_n)dW.
\end{eqnarray*}
Using the a priori estimates (\ref{4.20}), (\ref{4.21}) and the H\"{o}lder inequality, we have
\begin{eqnarray*}
\mathbb{E}\|X_n\|_{W^{1,2}(0,T;L^2(\mathbb{T},\mathbb{R}^3))}\leq C,
\end{eqnarray*}
where $C$ is independent of $n$. By the embedding (\ref{2.31}), we obtain the estimate
$$\mathbb{E}\|X_n\|_{C^\alpha([0,T];L^2(\mathbb{T}, \mathbb{R}^3))}\leq C.$$

Note that, for a.s. $\omega$, and for any $\delta'>0$, there exists $t_1,t_2\in [0,T]$ such that
$$\sup_{t,t'\in [0, T], t\neq t'}\frac{\left\|\int_{t}^{t'}fdW\right\|_{L^2}}{|t'-t|^\alpha}\leq \frac{\left\|\int_{t_1}^{t_2}fdW\right\|_{L^2}}{|t_2-t_1|^\alpha}+\delta'.$$
Regarding the stochastic term $Y_n$, using the Burkholder-Davis-Gundy inequality (\ref{2.4}) and assumption \eqref{2.5}, we get
\begin{eqnarray*}
&&\quad\mathbb{E}\left\|\int_{0}^{t}\Trucan P_n\mathbb{F}(r_n,\mathbf{u}_n)dW\right\|_{C^\alpha([0,T];L^2(\mathbb{T}))}\\ &&\leq
\mathbb{E}\left[\sup_{t,t'\in [0,T], t\neq t'}\frac{\left\|\int_{t'}^{t}\Trucan P_n\mathbb{F}(r_n,\mathbf{u}_n)dW\right\|_{L^2}}{|t-t'|^\alpha}\right]\\
&&\leq \frac{\mathbb{E}\left\|\int_{t_1}^{t_2}\Trucan P_n\mathbb{F}(r_n,\mathbf{u}_n)dW\right\|_{L^2}}{|t_2-t_1|^\alpha}+\delta'\\
&&\leq \frac{C\mathbb{E}\left(\int_{t_1}^{t_2}\left\|\Trucan \mathbb{F}(r_n,\mathbf{u}_n)\right\|_{L_2(\mathfrak{U};L^2(\mathbb{T}))}^2d\xi\right)^\frac{1}{2}}{|t_2-t_1|^\alpha}+\delta'\\
&&\leq C(R)|t_2-t_1|^{\frac{1}{2}-\alpha}+\delta'\leq C.
\end{eqnarray*}
Thus, we get the estimate (\ref{4.45}). Fix any $\alpha\in (0,\frac{1}{2})$, by the Aubin-Lions lemma \ref{lem6.1}, we have
\begin{eqnarray*}
C^\alpha([0,T];L^2(\mathbb{T}, \mathbb{R}^3))\cap L^\infty(0,T;W^{s,2}(\mathbb{T},\mathbb{R}^3))\hookrightarrow\hookrightarrow L^\infty(0,T;W^{s-\varepsilon,2}(\mathbb{T},\mathbb{R}^3)).
\end{eqnarray*}
Therefore, for any fixed $K> 0$, the set
\begin{eqnarray*}
&&B_{K}:=\bigg\{\mathbf{u}\in C^\alpha([0,T];L^2(\mathbb{T},\mathbb{R}^3))\cap L^\infty(0,T;W^{s,2}(\mathbb{T}, \mathbb{R}^3)):\\ &&\qquad\qquad\qquad\|\mathbf{u}\|_{C^\alpha([0,T];L^2(\mathbb{T}, \mathbb{R}^3))}+\|\mathbf{u}\|_{L^\infty(0,T;W^{s,2}(\mathbb{T}, \mathbb{R}^3))}\leq K\bigg\}
\end{eqnarray*}
is compact in $L^\infty(0,T;W^{s-\varepsilon,2}(\mathbb{T}, \mathbb{R}^3))$. Applying the Chebyshev inequality and the estimates $(\ref{4.20})_2$, (\ref{4.45}), we have
\begin{align*}
 \mu_{\mathbf{u}}^{n}(B_{K}^{c})&=\mathbb{P}\left(\|\mathbf{u}_{n}\|_{L^\infty(0,T;W^{s,2}(\mathbb{T},\mathbb{R}^3))}
+\|\mathbf{u}_{n}\|_{C^\alpha([0,T];L^2(\mathbb{T}, \mathbb{R}^3))}> K\right)\nonumber\\
&\leq \mathbb{P}\left(\|\mathbf{u}_{n}\|_{L^\infty(0,T;W^{s,2}(\mathbb{T}, \mathbb{R}^3))}> \frac{K}{2}\right)+\mathbb{P}\left(\|\mathbf{u}_{n}\|_{C^\alpha([0,T];L^2(\mathbb{T}, \mathbb{R}^3))}> \frac{K}{2}\right)\nonumber\\
&\leq \frac{2}{K}\left(\mathbb{E}\|\mathbf{u}_{n}\|_{L^\infty(0,T;W^{s,2}(\mathbb{T}, \mathbb{R}^3))}+\mathbb{E}\|\mathbf{u}_{n}\|_{C^\alpha([0,T];L^2(\mathbb{T}, \mathbb{R}^3))}\right)\leq \frac{C}{K},
\end{align*}
where the constant $C$ is independent of $n,K$. Thus, we obtain the tightness of the sequence of measures $\{\mu^{n}_\mathbf{u}\}_{n\geq 1}$.
\end{proof}

\begin{lemma}\label{lem4.5}
 The set of the sequence of measures $\{\mu^{n}_Q\}_{n\geq 1}$ is tight on path space $\mathcal{X}_Q$.
\end{lemma}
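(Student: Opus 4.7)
The plan mirrors Lemma \ref{lem3.2}, but is simpler because the $Q$-equation \eqref{qnt}(3) contains no stochastic integral, so a pathwise $L^2$-bound on $\partial_t Q_n$ suffices in place of the martingale decomposition used there. First I would read off a time-derivative estimate directly from \eqref{22.2}: writing
$$\partial_t Q_n = -\Trucan(\mathbf{u}_n\cdot\nabla_x Q_n - \Theta_n Q_n + Q_n\Theta_n) + \Gamma L\triangle Q_n + \Trucan\mathcal{K}(Q_n),$$
I bound each piece in $W^{s,2}$ via Lemma \ref{lem2.1}. The cutoff $\Trucan$ forces $\|\mathbf{u}_n\|_{2,\infty}, \|Q_n\|_{3,\infty}\le 2R$ on its support, so every $L^\infty$-type norm appearing in Moser's inequality is bounded by $2R$; together with \eqref{4.20}--\eqref{4.21} this makes the transport and polynomial terms $L^\infty_t W^{s,2}$-bounded, while $\Gamma L\triangle Q_n\in L^2_t W^{s,2}$ comes directly from \eqref{4.21}. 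The only subtle piece is $\Trucan(\Theta_n Q_n-Q_n\Theta_n)$, where Moser produces $\|\Theta_n\|_{s,2}\le C\|\mathbf{u}_n\|_{s+1,2}$; this is $L^2_t$-integrable only in the $\Trucan$-weighted sense of \eqref{4.19}, but since the factor $\Trucan$ is already present in the term under estimation, the product is uniformly bounded in $L^2_tW^{s,2}$. In total,
$$\mathbb{E}\|\partial_t Q_n\|_{L^2(0,T;W^{s,2}(\mathbb{T},S_0^3))}^{p}\le C.$$

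Combined with \eqref{4.21}, this shows $Q_n$ is uniformly bounded in $L^p(\Omega;Y)$, where
$$Y:=L^\infty(0,T;W^{s+1,2}(\mathbb{T},S_0^3))\cap L^2(0,T;W^{s+2,2}(\mathbb{T},S_0^3))\cap W^{1,2}(0,T;W^{s,2}(\mathbb{T},S_0^3)).$$
I would then show that $Y$-bounded sets are relatively compact in $\mathcal{X}_Q$ via two results. For the $C([0,T];W^{s,2})$-factor, the bound $\|Q_n(t)-Q_n(t')\|_{W^{s,2}}\le \|\partial_t Q_n\|_{L^2_tW^{s,2}}|t-t'|^{1/2}$ yields uniform equicontinuity in $W^{s,2}$, while for each $t$ the values $\{Q_n(t)\}_n$ are precompact in $W^{s,2}$ by the compact embedding $W^{s+1,2}\hookrightarrow\hookrightarrow W^{s,2}$; Arzel\`a--Ascoli then gives relative compactness in $C([0,T];W^{s,2}(\mathbb{T},S_0^3))$. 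For the $L^2(0,T;W^{s+1,2})$-factor, the classical Aubin--Lions lemma \ref{lem6.1} applied to the triple $W^{s+2,2}\hookrightarrow\hookrightarrow W^{s+1,2}\hookrightarrow W^{s,2}$ with the $L^2_tW^{s+2,2}$ and $L^2_tW^{s,2}$ bounds yields relative compactness in $L^2(0,T;W^{s+1,2}(\mathbb{T},S_0^3))$.

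Defining $B_K:=\{Q\in Y:\|Q\|_Y\le K\}$, these two steps imply that $B_K$ is relatively compact in $\mathcal{X}_Q$, and the Chebyshev inequality applied to the uniform bound $\mathbb{E}\|Q_n\|_Y\le C$ gives
$$\mu^n_Q(B_K^c)\le \frac{C}{K}$$
with $C$ independent of $n$ and $K$, so sending $K\to\infty$ yields the tightness of $\{\mu^n_Q\}_{n\ge 1}$ on $\mathcal{X}_Q$. The main bookkeeping obstacle is precisely the $\Theta_n Q_n$ term noted above: it is the one place where the argument genuinely relies on the $\Trucan$-weighted $L^2_tW^{s+1,2}$ control of $\mathbf{u}_n$ produced by the high-order energy estimate \eqref{4.19}; all remaining estimates are routine consequences of Lemmas \ref{lem2.1}--\ref{lem2.2} and the a priori bounds \eqref{4.20}--\eqref{4.21}.
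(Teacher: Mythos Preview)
Your argument is correct and follows the same overall strategy as the paper (a time-derivative bound together with Aubin--Lions/Arzel\`a--Ascoli, then Chebyshev), but you work harder than necessary. The paper only records the weak estimate $\mathbb{E}\|Q_n\|_{W^{1,2}(0,T;L^2)}\le C$, which follows immediately from reading \eqref{qnt}(3) in $L^2$ without any commutator bookkeeping; combined with $L^2(0,T;W^{s+2,2})$ this already gives the Aubin--Lions embedding into $L^2(0,T;W^{s+1,2})$, and combined with $L^\infty(0,T;W^{s+1,2})$ it handles the $C([0,T];W^{s,2})$ factor as in Lemma~\ref{lem3.2}. Your stronger bound $\partial_t Q_n\in L^2(0,T;W^{s,2})$ is valid, and it does buy you a clean direct Arzel\`a--Ascoli argument with equicontinuity in the target space $W^{s,2}$, but the price is the delicate treatment of $\Trucan(\Theta_n Q_n-Q_n\Theta_n)$ via the weighted $\Trucan\|\mathbf{u}_n\|_{s+1,2}^2$ control from \eqref{4.19}. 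The paper sidesteps this entirely: with the pivot space taken to be $L^2$ rather than $W^{s,2}$, the term $\Theta_n Q_n$ is bounded trivially by $\|\nabla_x\mathbf{u}_n\|_\infty\|Q_n\|$, and no appeal to the dissipative estimate on $\mathbf{u}_n$ is needed.
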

\begin{proof} We only need to show that the set $\{\mu^{n}_Q\}_{n\geq 1}$ is tight on space $L^2(0,T;W^{s+1,2}(\mathbb{T},S_0^3))$, the proof of tightness on space $C([0,T];W^{s,2}(\mathbb{T},S_0^3))$ is the same as the proof of the set $\{\mu^{n}_\mathbf{u}\}_{n\geq 1}$.

From the equation (\ref{qnt})(3), we can easily show that
\begin{eqnarray}\label{4.42}
\mathbb{E}\|Q_{n}\|_{W^{1,2}(0,T;L^2(\mathbb{T},S_0^3))}\leq C,
\end{eqnarray}
where $C$ is a constant independence of $n$. For any fixed $K>0$, define the set
\begin{eqnarray*}
 &&\overline{B}_{K}:=\bigg\{Q\in L^{2}(0,T;W^{s+2,2}(\mathbb{T},S_0^3))\cap W^{1,2}(0,T;L^2(\mathbb{T},S_0^3)):\\
 &&\qquad\qquad\qquad \|Q\|_{L^{2}(0,T;W^{s+2,2}(\mathbb{T},S_0^3))}+\|Q\|_{W^{1,2}
 (0,T;L^2(\mathbb{T},S_0^3))}\leq K\bigg\},
\end{eqnarray*}
which is thus compact in $L^{2}(0,T;W^{s+1,2}(\mathbb{T},S_0^3))$ as a result of the compactness embedding
\begin{eqnarray*}
L^{2}(0,T;W^{s+2,2}(\mathbb{T},S_0^3))\cap W^{1,2}(0,T;L^2(\mathbb{T},S_0^3))\hookrightarrow L^{2}(0,T;W^{s+1,2}(\mathbb{T},S_0^3)).
\end{eqnarray*}
Applying the Chebyshev inequality and the estimates (\ref{4.21}), (\ref{4.42}), we get
\begin{eqnarray*}
&& \mu_{u}^{n}\left(\overline{B}_{K}^{c}\right)\leq \frac{C}{K},
\end{eqnarray*}
where the constant $C$ is independent of $n,K$.
\end{proof}

Using the same argument as above, we can show the tightness of the sequences of set $\{\mu^{n}_r\}_{n\geq 1}$. Since the sequence $W$ is only one element and thus, the set $\{\mu^{n}_W\}_{n\geq 1}$ is weakly compact. Then, the tightness of measure set $\{\mu^{n}\}_{n\geq 1}$ follows.

With the weakly compact of set $\{\mu^{n}\}_{n\geq 1}$ in hand, using the Skorokhod representation theorem \ref{thm4.1}, we have:
\begin{proposition}\label{pro5.3}
    There exists a subsequence of $\{\mu^n\}_{n\geq 1}$, also denoted as $\{\mu^n\}_{n\geq 1}$,  and a probability space
    $(\tilde{\Omega},\tilde{\mathcal{F}},\tilde{\mathbb{P}})$ as well as a sequence of random variables
    $(\tilde{r}_n,\tilde{\mathbf{u}}_n, \tilde{Q}_n, \tilde{W}_n)$, $(\tilde{r},\tilde{\mathbf{u}}, \tilde{Q}, \tilde{W})$ such that\\
    (a) the joint law of $(\tilde{r}_n,\tilde{\mathbf{u}}_n, \tilde{Q}_n, \tilde{W}_n)$ is $\mu^n$, and the joint law of
    $(\tilde{r},\tilde{\mathbf{u}}, \tilde{Q}, \tilde{W})$ is $\mu$, where $\mu$ is the weak limit of the sequence $\{\mu^n\}_{n\geq 1}$;\\
    (b) $(\tilde{r}_n,\tilde{\mathbf{u}}_n, \tilde{Q}_n, \tilde{W}_n)$ converges to $(\tilde{r},\tilde{\mathbf{u}}, \tilde{Q}, \tilde{W})$, $\tilde{\mathbb{P}}$ a.s. in the topology of $\mathcal{X}$;\\
    (c) the sequence of $\tilde{Q}_n$ and $\tilde{Q}$ belong to $S_0^3$, almost everywhere.\\
    (d) $\tilde{W}_n$ is a cylindrical Wiener process, relative to the filtration $\tilde{\mathcal{F}}_t^n$ given below.
\end{proposition}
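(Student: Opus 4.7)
The plan is to apply the Skorokhod representation theorem (Theorem \ref{thm4.1}) to the family $\{\mu^n\}_{n\geq 1}$ after first upgrading the component-wise tightness already established in Lemmas \ref{lem3.2} and \ref{lem4.5} into joint tightness on $\mathcal{X}$.

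First I would dispose of the remaining tightness: the tightness of $\{\mu^n_r\}$ on $\mathcal{X}_r = C([0,T];W^{s-1,2}(\mathbb{T}))$ is proved by the same splitting technique used for $\mathbf{u}_n$ in Lemma \ref{lem3.2}, but easier since equation \eqref{qnt}$(1)$ contains no stochastic integral: the estimate \eqref{4.20} together with a direct H\"older-in-time bound coming from the transport equation (using $\|\mathbf{u}_n\cdot\nabla_x r_n\|_{s-1,2}+\|r_n\Dv_x\mathbf{u}_n\|_{s-1,2}\leq C(R)$ on the cut-off) and the Aubin--Lions lemma give compact embedding into $C([0,T];W^{s-1,2})$. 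The measure $\mu_W$ is a single Borel probability measure on the Polish space $\mathcal{X}_W$, hence trivially tight. Since tightness on each factor of a finite product of Polish spaces implies tightness on the product, $\{\mu^n\}$ is tight on $\mathcal{X}$; Prokhorov's theorem yields a subsequence converging weakly to some Borel probability measure $\mu$.

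Next I would invoke Theorem \ref{thm4.1} to obtain the new probability space $(\tilde\Omega,\tilde{\mathcal F},\tilde{\mathbb P})$ and the random variables $(\tilde r_n,\tilde{\mathbf u}_n,\tilde Q_n,\tilde W_n)$ and $(\tilde r,\tilde{\mathbf u},\tilde Q,\tilde W)$ with matching joint laws and $\tilde{\mathbb{P}}$-a.s.\ convergence in the topology of $\mathcal{X}$; this gives (a) and (b) immediately. For (c), the subset
\begin{equation*}
\mathcal{S} = \{Q\in \mathcal{X}_Q : Q(t,x)\in S_0^3 \text{ for a.e.\ }(t,x)\}
\end{equation*}
is a closed Borel subset of $\mathcal{X}_Q$, and by Lemma \ref{G.1} together with the construction of $Q_n$ we have $\mu^n_Q(\mathcal{S})=1$, so by equality of laws $\tilde Q_n\in\mathcal{S}$ almost surely; since $\tilde Q_n\to\tilde Q$ a.s.\ in $C([0,T];W^{s,2})\hookrightarrow C([0,T]\times\mathbb{T})$ (using $s>\tfrac{7}{2}>\tfrac{3}{2}$), pointwise closedness of $S_0^3$ in $\mathbb{M}^{3\times 3}$ gives $\tilde Q\in\mathcal{S}$ as well.

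The main subtlety is (d), the Wiener-process property in the new space. I would define
\begin{equation*}
\tilde{\mathcal{F}}_t^n := \sigma\Bigl(\sigma\bigl(\tilde r_n(s),\tilde{\mathbf u}_n(s),\tilde Q_n(s),\tilde W_n(s):\,0\leq s\leq t\bigr)\cup \mathcal{N}\Bigr),
\end{equation*}
where $\mathcal{N}$ collects the $\tilde{\mathbb{P}}$-null sets, and verify that $\tilde W_n$ is a cylindrical Wiener process with respect to $\{\tilde{\mathcal{F}}_t^n\}_{t\geq0}$. Since the quadruples $(\tilde r_n,\tilde{\mathbf u}_n,\tilde Q_n,\tilde W_n)$ and $(r_n,\mathbf u_n,Q_n,W)$ have the same joint law on $\mathcal{X}$, the independence of the increment $W(t)-W(s)$ from $\sigma(r_n(\tau),\mathbf u_n(\tau),Q_n(\tau),W(\tau):\tau\leq s)$ transfers verbatim to the tilded objects, and the Gaussian distribution of increments is preserved; this is a standard transfer-of-law argument (e.g.\ \cite{Hofmanova,breit}). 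The hard part of the whole proposition is essentially this last step, since it is the only place where the abstract measure-theoretic identity of laws must be converted into a dynamical statement involving a specific filtration; once done, the quintuple is ready for the identification of the limit in the next subsection.
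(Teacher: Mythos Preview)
Your proposal is correct and follows essentially the same approach as the paper, which simply states that (a), (b), (d) are direct consequences of the Skorokhod representation theorem and that (c) follows from (a); you have merely filled in the details the paper omits. The only cosmetic difference is that for (c) you obtain $\tilde Q\in S_0^3$ via the a.s.\ convergence in (b), whereas the paper's phrasing ``(c) is a consequence of (a)'' implicitly uses that the closed set $\mathcal S$ has full $\mu$-measure by Portmanteau; both routes are standard and equivalent.
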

\begin{proof} The results (a), (b), (d) are a direct consequence of the Skorokhod representation theorem. The result (c) is a consequence of result (a).
\end{proof}
\begin{proposition}\label{pro4.7} The sequence $(\tilde{r}_n,\tilde{\mathbf{u}}_n, \tilde{Q}_n, \tilde{W}_n)$ still satisfies the $n$-th order Galerkin approximate system  relative to the stochastic basis $\widetilde{S}^n:=(\tilde{\Omega},\tilde{\mathcal{F}},\tilde{\mathbb{P}}, \{\tilde{\mathcal{F}}_t^n\}_{t\geq 0}, \tilde{W}_n )$, where $\tilde{\mathcal{F}}_t^n$ is a canonical filtration defined by $$\sigma\left(\sigma\left(\tilde{r}_n(s),\tilde{\mathbf{u}}_n(s),\tilde{Q}_n(s),
  \tilde{W}_n(s):s\leq t\right)\cup \left\{\Sigma\in \tilde{\mathcal{F}}; \tilde{\mathbb{P}}(\Sigma)=0\right\}\right).$$
\end{proposition}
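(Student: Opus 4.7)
The plan is to split the verification of the Galerkin identities into three parts, handling the pathwise (deterministic) equations separately from the stochastic momentum equation. For the continuity equation and the $Q$-tensor equation in \eqref{qnt}, both sides are continuous functionals of the trajectories $(r,\mathbf{u},Q)$ in the path space $\mathcal{X}$ (the integrands involve only pointwise evaluation and Lebesgue integration in time), so the fact that they vanish $\mathbb{P}$-a.s.\ for $(r_n,\mathbf{u}_n,Q_n)$ transfers to $(\tilde r_n,\tilde{\mathbf{u}}_n,\tilde Q_n)$ under $\tilde{\mathbb{P}}$ via Proposition \ref{pro5.3}(a), since laws of continuous functionals are preserved. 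This immediately yields the first and third equations of the Galerkin system on $\widetilde S^n$.

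For the momentum equation, I would introduce the continuous square-integrable $\mathcal{F}_t$-martingale
\begin{align*}
M_i^n(t) :=&~\langle \mathbf{u}_n(t)-\mathbf{u}_n(0),\psi_i\rangle + \int_0^t \Trucan\langle \mathbf{u}_n\nabla_x\mathbf{u}_n+r_n\nabla_x r_n,\psi_i\rangle d\xi\\
&-\int_0^t \Trucan\langle D(r_n)(\mathcal{L}\mathbf{u}_n-\Dv_x(L\nabla_x Q_n\odot\nabla_x Q_n-\mathcal{F}(Q_n){\rm I}_3)\\
&\qquad\qquad+L\Dv_x(Q_n\triangle Q_n-\triangle Q_n Q_n)),\psi_i\rangle d\xi,
\end{align*}
which by \eqref{22.9} equals $\int_0^t\Trucan\langle \mathbb{F}(r_n,\mathbf{u}_n),\psi_i\rangle dW$ and hence has quadratic variation $\int_0^t(\Trucan)^2\|\langle \mathbb{F}(r_n,\mathbf{u}_n),\psi_i\rangle\|_{L_2(\mathfrak{U},\mathbb{R})}^2 d\xi$ and cross variation $\int_0^t\Trucan\langle\mathbb{F}(r_n,\mathbf{u}_n)e_k,\psi_i\rangle d\xi$ with each $\beta_k$. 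Defining the analogous functional $\tilde M_i^n$ on $\tilde\Omega$, I would verify the martingale and quadratic-variation identities for $\tilde M_i^n$ by testing against bounded continuous functions of the history $(\tilde r_n,\tilde{\mathbf{u}}_n,\tilde Q_n,\tilde W_n)|_{[0,s]}$ and invoking equality in law: these expectations match those on the original space, where they vanish because $M_i^n-\int G dW$ is a martingale with zero quadratic variation.

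Having established that $\tilde M_i^n$ is a continuous $\tilde{\mathcal{F}}_t^n$-martingale with the correct quadratic and cross variations against $\tilde W_n$ (which is an $\tilde{\mathcal{F}}_t^n$-cylindrical Wiener process by Proposition \ref{pro5.3}(d)), I would conclude by the standard martingale representation/identification lemma (cf.\ \cite{Hofmanova}) that
\[
\tilde M_i^n(t)=\int_0^t \Phi_R^{\tilde{\mathbf{u}}_n,\tilde Q_n}\langle \mathbb{F}(\tilde r_n,\tilde{\mathbf{u}}_n),\psi_i\rangle d\tilde W_n,\qquad \tilde{\mathbb{P}}\text{-a.s.},
\]
which is exactly the stochastic form of the $n$-th Galerkin equation on $\widetilde S^n$.

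The main obstacle is the rigorous transfer of the stochastic integral: Skorokhod's theorem only delivers equality in law for the paths, not for the Itô integral, so the identification of the stochastic term must proceed through the martingale characterisation (quadratic variation plus cross variation with $\tilde W_n$) and requires checking that $\tilde W_n$ remains a cylindrical Wiener process with respect to the canonical filtration $\tilde{\mathcal{F}}_t^n$. Once this is in place, all other terms are continuous functionals of the paths and pass under the Skorokhod convergence without further difficulty.
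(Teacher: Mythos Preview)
Your proposal is correct and follows exactly the standard martingale-characterisation argument that the paper defers to (the paper's own proof is simply ``The proof is similar to the one in \cite{Glatt-Holtz,DWang}, here we omit the details''). Your decomposition into the pathwise equations (transferred by continuity of the functionals under equality of laws) and the stochastic momentum equation (handled via the martingale $\tilde M_i^n$, its quadratic variation, and cross variation with $\tilde W_n$, followed by the representation lemma) is precisely the method in those references, and is moreover the same scheme the paper itself spells out in Subsection~4.4 for the limit identification.
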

\begin{proof} The proof is similar to the one in \cite{Glatt-Holtz,DWang}, here we omit the details.
\end{proof}

{\bf 4.4 Identification of limit}. We verify that $(\widetilde{\mathcal{S}}, \tilde{r},\tilde{\mathbf{u}}, \tilde{Q}, \tilde{W})$ is a strong martingale solution to system \eqref{qnt}, where $\widetilde{\mathcal{S}}:=(\tilde{\Omega},\tilde{\mathcal{F}},\tilde{\mathbb{P}}, \{\tilde{\mathcal{F}}_t\}_{t\geq 0})$ and the canonical filtration $\tilde{\mathcal{F}}_t$ was given by
\begin{align*}
  \tilde{\mathcal{F}}_t&=\sigma\left(\sigma\left(\tilde{r}(s),\tilde{\mathbf{u}}(s),\tilde{Q}(s),
  \tilde{W}(s):s\leq t\right)\cup \left\{\Sigma\in \tilde{\mathcal{F}}; \tilde{\mathbb{P}}(\Sigma)=0\right\}\right).
\end{align*}
Define the following functionals
\begin{align*}
  & \mathcal{P}(r,\mathbf{u})_t=r(t)-r(0)+\int_{0}^{t}\Truca \left(\mathbf{u}\cdot\nabla_xr+\frac{\gamma-1}{2}r\Dv_x\mathbf{u}\right)d\xi,\\
  & \mathcal{N}(Q,\mathbf{u})_t=Q(t)-Q(0)-\int_{0}^{t}\Gamma L \triangle Qd\xi+\int_{0}^{t}\Truca (\mathbf{u}\cdot\nabla_x Q-\Theta Q+Q\Theta-\mathcal{K}(Q))d\xi,\\
  &  \mathcal{M}(r,\mathbf{u},Q)_t=\mathbf{u}(t)-\mathbf{u}(0)
  +\int_{0}^{t}\Truca (\mathbf{u}\cdot\nabla_x \mathbf{u}+r\nabla_x r)d\xi\\
  &\quad-\int_{0}^{t}\Truca (D(r)(\mathcal{L}\mathbf{u}-\Dv_x(L\nabla_x Q\odot \nabla_x Q
  -\mathcal{F}(Q){\rm I}_3)+L\Dv_x(Q\triangle Q-\triangle QQ))d\xi.
\end{align*}
First, we show that for any function $\mathbf{h}\in L^2(\mathbb{T})$, almost every $(\omega, t)\in \tilde{\Omega}\times (0,T]$
\begin{eqnarray*}
\langle \mathcal{P}(\tilde{r}_n,\tilde{\mathbf{u}}_n)_t, \mathbf{h}\rangle \rightarrow \langle \mathcal{P}(\tilde{r},\tilde{\mathbf{u}})_t, \mathbf{h}\rangle,~~
\langle \mathcal{N}(\tilde{Q}_n,\tilde{\mathbf{u}}_n)_t,\mathbf{h}\rangle\rightarrow\langle \mathcal{N}(\tilde{Q},\tilde{\mathbf{u}})_t,\mathbf{h}\rangle,
\end{eqnarray*}
as $n\rightarrow \infty$. We only give the argument of high-order term $Q{\rm tr}(Q^2)$ in $\mathcal{K}(Q)$. Note that
\begin{align*}
&\quad\left|\int_{0}^{t}(\Phi_R^{\tilde{\mathbf{u}}_n, \tilde{Q}_n}\tilde{Q}_n{\rm tr}(\tilde{Q}_n^2)-\Phi_R^{\tilde{\mathbf{u}}, \tilde{Q}}\tilde{Q}{\rm tr}(\tilde{Q}^2), \mathbf{h})d\xi\right|\\
&\leq \left|\int_{0}^{t}((\Phi_R^{\tilde{\mathbf{u}}_n, \tilde{Q}_n}-\Phi_R^{\tilde{\mathbf{u}}, \tilde{Q}})\tilde{Q}_n{\rm tr}(\tilde{Q}_n^2), \mathbf{h})d\xi\right|+\left|\int_{0}^{t}\Phi_R^{\tilde{\mathbf{u}}, \tilde{Q}}(\tilde{Q}_n{\rm tr}(\tilde{Q}_n^2)-\tilde{Q}{\rm tr}(\tilde{Q}^2), \mathbf{h})d\xi\right|\\
&\leq \left|\int_{0}^{t}((\Phi_R^{\tilde{\mathbf{u}}_n, \tilde{Q}_n}-\Phi_R^{\tilde{\mathbf{u}}, \tilde{Q}})\tilde{Q}_n{\rm tr}(\tilde{Q}_n^2), \mathbf{h})d\xi\right|+\left|\int_{0}^{t}\Phi_R^{\tilde{\mathbf{u}},\tilde{ Q}}((\tilde{Q}_n-\tilde{Q}){\rm tr}(\tilde{Q}_n^2), \mathbf{h})d\xi\right|\\
&\quad+\left|\int_{0}^{t}\Phi_R^{\tilde{\mathbf{u}}, \tilde{Q}}(\tilde{Q}({\rm tr}(\tilde{Q}_n^2)-{\rm tr}(\tilde{Q}^2)), \mathbf{h})d\xi\right|\\
&=:J_1+J_2+J_3.
\end{align*}
Using the mean value theorem, the H\"{o}lder inequality and Proposition \ref{pro5.3}(b),  we get
\begin{align*}
J_1&\leq C\|\mathbf{h}\|\int_{0}^{t}(\|\tilde{\mathbf{u}}_n-\tilde{\mathbf{u}}\|_{2,\infty}+\|\tilde{Q}_n-\tilde{Q}\|_{3,\infty})\|\tilde{Q}_n{\rm tr}(\tilde{Q}_n^2)\|_{\infty}d\xi\\
&\leq C\|\mathbf{h}\|\int_{0}^{t}(\|\tilde{\mathbf{u}}_n-\tilde{\mathbf{u}}\|_{s-\varepsilon,2}+\|\tilde{Q}_n-\tilde{Q}\|_{s+1,2})\|\tilde{Q}_n{\rm tr}(\tilde{Q}_n^2)\|_{\infty}d\xi\\
&\leq C\|\mathbf{h}\|\sup_{t\in [0,T]}\|\tilde{Q}_n{\rm tr}(\tilde{Q}_n^2)\|_{\infty}\int_{0}^{t}(\|\tilde{\mathbf{u}}_n-\tilde{\mathbf{u}}\|_{s-\varepsilon,2}+\|\tilde{Q}_n-\tilde{Q}\|_{s+1,2})d\xi\\
&\rightarrow 0,~ {\rm as} ~n\rightarrow\infty,~ \tilde{\mathbb{P}} ~\mbox{a.s.}
\end{align*}
We could use the same argument to get $J_2,J_3\rightarrow 0$, $n\rightarrow\infty,~ \tilde{\mathbb{P}} ~\mbox{a.s.}$. Furthermore, by the Vitali convergence theorem \ref{thm6.1}, we infer that $(\tilde{r},\tilde{\mathbf{u}},\tilde{Q})$ solves equations \eqref{qnt}(1)(3).

 It remains to verify that $(\tilde{r},\tilde{\mathbf{u}},\tilde{Q}, \tilde{W})$ solves equation \eqref{qnt}(2) by passing $n\rightarrow \infty$. With the spirit of \cite{ZM}, we are able to obtain the limit $(\tilde{r},\tilde{\mathbf{u}}, \tilde{Q}, \tilde{W})$ satisfies the equation \eqref{qnt}(2) once we show that the process $\mathcal{M}(\tilde{r},\tilde{\mathbf{u}},\tilde{Q})_t$ is a square integral martingale and its quadratic and cross variations satisfy,
\begin{eqnarray}
&& \langle\langle \mathcal{M}(\tilde{r},\tilde{\mathbf{u}},\tilde{Q})_t\rangle\rangle=\int_{0}^{t}(\Phi_R^{\tilde{\mathbf{u}}, \tilde{Q}})^2\|\mathbb{F}(\tilde{r},\tilde{\mathbf{u}})\|_{L_2(\mathfrak{U};L^2(\mathbb{T},\mathbb{R}^3))}^2d\xi,\label{5.6}\\
&&\langle\langle \mathcal{M}(\tilde{r},\tilde{\mathbf{u}},\tilde{Q})_t, \tilde{\beta}_k\rangle\rangle=\int_{0}^{t}\Phi_R^{\tilde{\mathbf{u}}, \tilde{Q}} \|\mathbb{F}(\tilde{r},\tilde{\mathbf{u}})e_k\|d\xi.\label{5.7}
\end{eqnarray}

We clarify that the $\tilde{\mathcal{F}}_t$-Wiener process $\tilde{W}$ can be written in the form of $\tilde{W}=\sum_{k\geq 1}\tilde{\beta}_ke_k$. Since $\tilde{W}_n$ has the same distribution as $W_n$, then clearly its distribution is the same to $W$. That is, for any $n\in\mathbb{N}$, there exists a collection of mutually independent real-valued
$\tilde{\mathcal{F}}_t^n$-Wiener processes $\{\tilde{\beta}_k^n\}_{k\geq 1}$, such that $\tilde{W}_n=\sum_{k\geq 1}\tilde{\beta}_k^ne_k$. Due to the
convergence property of $\tilde{W}_n$, therefore the same thing holds for $\tilde{W}$.

For any function $\mathbf{h}\in L^2(\mathbb{T},\mathbb{R}^3)$, by Proposition \ref{pro4.7}, we have
\begin{align*}
  \tilde{\mathbb{E}}&\left[h(\mathbf{r}_s\tilde{r}_n,\mathbf{r}_s\tilde{\mathbf{u}}_n,\mathbf{r}_s\tilde{Q}_n,\mathbf{r}_s\tilde{W}_n)
  \langle \mathcal{M}(\tilde{r}_n,\tilde{\mathbf{u}}_n,\tilde{Q}_n)_t
  -\mathcal{M}(\tilde{r}_n,\tilde{\mathbf{u}}_n,\tilde{Q}_n)_s,\mathbf{h}\rangle\right]=0, \\
  \tilde{\mathbb{E}}&\bigg[h(\mathbf{r}_s\tilde{r}_n,\mathbf{r}_s\tilde{\mathbf{u}}_n,\mathbf{r}_s\tilde{Q}_n,\mathbf{r}_s\tilde{W}_n)
  \bigg(\langle \mathcal{M}(\tilde{r}_n,\tilde{\mathbf{u}}_n,\tilde{Q}_n)_t,\mathbf{h}\rangle^2
  -\langle \mathcal{M}(\tilde{r}_n,\tilde{\mathbf{u}}_n,\tilde{Q}_n)_s,\mathbf{h}\rangle^2 \\
  &-\int_{s}^{t}(\Phi_R^{\tilde{\mathbf{u}}_n, \tilde{Q}_n})^2\|(P_n\mathbb{F}(\tilde{r}_n,\tilde{\mathbf{u}}_n))^*\mathbf{h}\|_{\mathfrak{U}}^2d\xi\bigg)\bigg]=0,\\
  \tilde{\mathbb{E}}&\bigg[h(\mathbf{r}_s\tilde{r}_n,\mathbf{r}_s\tilde{\mathbf{u}}_n,\mathbf{r}_s\tilde{Q}_n,\mathbf{r}_s\tilde{W}_n)
  \bigg(\tilde{\beta}^n_k(t)\langle \mathcal{M}(\tilde{r}_n,\tilde{\mathbf{u}}_n,\tilde{Q}_n)_t,\mathbf{h}\rangle
  -\tilde{\beta}^n_k(s)\langle \mathcal{M}(\tilde{r}_n,\tilde{\mathbf{u}}_n,\tilde{Q}_n)_s,\mathbf{h}\rangle \\
  &-\int_{s}^{t}\Phi_R^{\tilde{\mathbf{u}}_n,\tilde{Q}_n}\langle P_n\mathbb{F}(\tilde{r}_n,\tilde{\mathbf{u}}_n)e_k,\mathbf{h}\rangle d\xi\bigg)\bigg]=0,
\end{align*}
where $h$ is a continuous function defined by
$$h:\mathcal{X}_r|_{[0,s]}\times\mathcal{X}_\mathbf{u}|_{[0,s]}\times\mathcal{X}_Q|_{[0,s]}\times\mathcal{X}_W|_{[0,s]}\rightarrow [0,1]$$
and  $\mathbf{r}_t$ is an operator as the restriction of the path
spaces $\mathcal{X}_r$, $\mathcal{X}_{\mathbf{u}}$, $\mathcal{X}_Q$ and $\mathcal{X}_W$ to the interval $[0,t]$ for any $t\in[0,T]$.

In order to pass the limit in above equality, we show that for almost every $(\omega, t)\in \tilde{\Omega}\times (0,T]$
\begin{align}\label{4.55}
(\mathcal{M}(\tilde{r}_n,\tilde{\mathbf{u}}_n,\tilde{Q}_n)_t, \mathbf{h})\rightarrow (\mathcal{M}(\tilde{r},\tilde{\mathbf{u}},\tilde{Q})_t, \mathbf{h}).
\end{align}
We only consider the nontrivial term ${\rm div}_x(Q\triangle Q-\triangle QQ)$. Note that
\begin{align*}
&\quad\int_{0}^{t}\bigg(\Phi_R^{\tilde{\mathbf{u}}_n, \tilde{Q}_n} D(\tilde{r}_n)L\Dv_x(\tilde{Q}_n\triangle \tilde{Q}_n-\triangle \tilde{Q}_n\tilde{Q}_n)-\Phi_R^{\tilde{\mathbf{u}}, \tilde{Q}}  D(\tilde{r})L\Dv_x(\tilde{Q}\triangle \tilde{Q}-\triangle \tilde{Q}\tilde{Q}), \mathbf{h}\bigg)d\xi\\
&\leq \int_{0}^{t}\left((\Phi_R^{\tilde{\mathbf{u}}_n, \tilde{Q}_n}-\Phi_R^{\tilde{\mathbf{u}}, \tilde{Q}})D(\tilde{r}_n)L\Dv_x(\tilde{Q}_n\triangle \tilde{Q}_n-\triangle \tilde{Q}_n\tilde{Q}_n), \mathbf{h}\right)d\xi\\
&\quad+\int_{0}^{t}\left(\Phi_R^{\tilde{\mathbf{u}}, \tilde{Q}} (D(\tilde{r}_n)-D(\tilde{r}))L\Dv_x(\tilde{Q}_n\triangle \tilde{Q}_n-\triangle \tilde{Q}_n\tilde{Q}_n), \mathbf{h}\right)d\xi\\
&\quad+\int_{0}^{t}\left(\Phi_R^{\tilde{\mathbf{u}}, \tilde{Q}} D(\tilde{r})L\Dv_x(\tilde{Q}_n\triangle \tilde{Q}_n-\triangle \tilde{Q}_n\tilde{Q}_n-\triangle \tilde{Q}\tilde{Q}+ \tilde{Q}\triangle\tilde{Q}), \mathbf{h}\right)d\xi\\
&=:K_1+K_2+K_3.
\end{align*}
Using the mean value theorem, the H\"{o}lder inequality, \eqref{4.2} and Proposition \ref{pro5.3}(b), we get
\begin{align*}
K_1&\leq C\|\mathbf{h}\|\int_{0}^{t}(\|\tilde{\mathbf{u}}_n-\tilde{\mathbf{u}}\|_{2,\infty}+\|\tilde{Q}_n-\tilde{Q}\|_{3,\infty})\|D(\tilde{r}_n)\|_{\infty}
\|\tilde{Q}_n\|_{1,\infty}\|\tilde{Q}_n\|_{3,\infty}d\xi\\
&\leq C\|\mathbf{h}\|\sup_{t\in [0,T]}\|\tilde{Q}_n\|_{s+1,2}^2\int_{0}^{t}(\|\tilde{\mathbf{u}}_n-\tilde{\mathbf{u}}\|_{2,\infty}+\|\tilde{Q}_n-\tilde{Q}\|_{3,\infty})d\xi\\
&\rightarrow 0,~ {\rm as} ~n\rightarrow\infty,~ \tilde{\mathbb{P}} ~\mbox{a.s.}
\end{align*}
Similarly, using \eqref{4.4}, the H\"{o}lder inequality and Proposition \ref{pro5.3}(b), we get $K_2\rightarrow 0$, $\tilde{\mathbb{P}}$ ~\mbox{a.s.}.
Using the H\"{o}lder inequality, \eqref{4.2} and Proposition \ref{pro5.3}(b), we also get $K_3\rightarrow 0$, $\tilde{\mathbb{P}}$ ~\mbox{a.s.}.

Last, let $n\rightarrow \infty$, by \eqref{4.55} and the Vitali convergence theorem \ref{thm6.1}, we could find
\begin{align*}
  \tilde{\mathbb{E}}&\left[h(\mathbf{r}_s\tilde{r},\mathbf{r}_s\tilde{\mathbf{u}},\mathbf{r}_s\tilde{Q},\mathbf{r}_s\tilde{W})
  \langle \mathcal{M}(\tilde{r},\tilde{\mathbf{u}},\tilde{Q})_t
  -\mathcal{M}(\tilde{r},\tilde{\mathbf{u}},\tilde{Q})_s,\mathbf{h}\rangle\right]=0, \\
  \tilde{\mathbb{E}}&\bigg[h(\mathbf{r}_s\tilde{r},\mathbf{r}_s\tilde{\mathbf{u}},\mathbf{r}_s\tilde{Q},\mathbf{r}_s\tilde{W})
  \bigg(\langle \mathcal{M}(\tilde{r},\tilde{\mathbf{u}},\tilde{Q})_t,\mathbf{h}\rangle^2
  -\langle \mathcal{M}(\tilde{r},\tilde{\mathbf{u}},\tilde{Q})_s,\mathbf{h}\rangle^2 \\
  &-\int_{s}^{t}(\Phi_R^{\tilde{\mathbf{u}}, \tilde{Q}})^2\|(\mathbb{F}(\tilde{r},\tilde{\mathbf{u}}))^*\mathbf{h}\|_{\mathfrak{U}}^2d\xi\bigg)\bigg]=0,\\
  \tilde{\mathbb{E}}&\bigg[h(\mathbf{r}_s\tilde{r},\mathbf{r}_s\tilde{\mathbf{u}},\mathbf{r}_s\tilde{Q},\mathbf{r}_s\tilde{W})
 \bigg(\tilde{\beta}_k(t)\langle \mathcal{M}(\tilde{r},\tilde{\mathbf{u}},\tilde{Q})_t,\mathbf{h}\rangle
  -\tilde{\beta}_k(s)\langle \mathcal{M}(\tilde{r},\tilde{\mathbf{u}},\tilde{Q})_s,\mathbf{h}\rangle \\
  &-\int_{s}^{t}\Phi_R^{\tilde{\mathbf{u}}, \tilde{Q}}\langle \mathbb{F}(\tilde{r},\tilde{\mathbf{u}})e_k,\mathbf{h}\rangle d\xi\bigg)\bigg]=0.
\end{align*}
Thus, we obtain the desired equalities (\ref{5.6}) and (\ref{5.7}), the Definition \ref{def3.1}(4) follows.

From the estimate $\eqref{4.20}_1$ and the mass equation itself, we are able to deduce that the process $r$ is continuous with respect to time $t$ in $W^{s,2}(\mathbb{T})$ using the \cite[Theorem 3.1]{KR}, see also \cite{breit} for the compressible Navier-Stokes equations. Moreover, by the initial data condition and estimate \eqref{2.1}, we infer the process $r$ has the uniform lower bound which depends on $R$, $\tilde{\mathbb{P}}$ a.s..  Since the high-order terms ${\rm div}_x (Q\triangle Q-\triangle QQ)$ and $\Theta Q-Q\Theta$ arise in momentum and $Q$-tensor equations, again by \cite[Theorem 3.1]{KR} and the estimates $\eqref{4.20}, \eqref{4.21}$ and the equations itself, we could only infer that $(\mathbf{u}, Q)$ is continuous with respect to time $t$ in $W^{s-1,2}(\mathbb{T}, \mathbb{R}^3)\times W^{s,2}(\mathbb{T}, S_0^3)$. This completes the proof of Theorem \ref{th3.1}.
\section{\bf Existence and Uniqueness of Strong Pathwise Solution to Truncated System}

In this section, we establish the existence and uniqueness of strong pathwise solution to system (\ref{qnt}) and start with the definition and result.
\begin{definition}\label{def2} (Strong pathwise solution) Let $(\Omega,\mathcal{F},\{\mathcal{F}_t\}_{t\geq 0},\mathbb{P})$ be a fixed stochastic basis and $W$ be a given cylindrical Wiener process. The triple $(r, \mathbf{u}, Q)$ is called a global strong pathwise solution to system (\ref{qnt}) with initial data $(r_0, \mathbf{u}_0, Q_0)$ if the following conditions hold
\begin{enumerate}
  \item $r$, $\mathbf{u}$ are $\mathcal{F}_t$-progressively measurable processes with values in $W^{s,2}(\mathbb{T}), W^{s,2}(\mathbb{T}, \mathbb{R}^3)$, $Q$ is $\mathcal{F}_t$-progressively measurable process with value in $W^{s+1,2}(\mathbb{T},S_0^3)$, satisfying
      \begin{align*}
        &\qquad r\in L^2(\Omega;C([0,T];W^{s,2}(\mathbb{T}))),~r>0,~ \mathbb{P}\mbox{ a.s.}\\
        &\qquad \mathbf{u}\in L^2(\Omega;L^\infty(0,T;W^{s,2}(\mathbb{T};\mathbb{R}^3))\cap C([0,T];W^{s-1,2}(\mathbb{T};\mathbb{R}^3))),\\
        &\qquad Q\in L^2(\Omega;L^\infty(0,T;W^{s+1,2}(\mathbb{T};S_0^3))
        \cap L^2(0,T;W^{s+2,2}(\mathbb{T};S_0^3))\cap C([0,T];W^{s,2}(\mathbb{T};S_0^3)));
      \end{align*}

  \item for all $t\in[0,T]$, $\mathbb{P}$ a.s.
    \begin{align*}
      r(t) &=r_0-\int_{0}^{t} \Truca \left(\mathbf{u}\cdot\nabla_xr+\frac{\gamma-1}{2}r\Dv_x\mathbf{u}\right)d\xi,\\
      \mathbf{u}(t)&=\mathbf{u}_0-\int_{0}^{t}\Truca (\mathbf{u}\cdot\nabla_x \mathbf{u}+r\nabla_x r)d\xi\\
    &\quad+\int_{0}^{t}\Truca D(r)(\mathcal{L}\mathbf{u}-\Dv_x(L\nabla_x Q\odot \nabla_x Q
    -\mathcal{F}(Q){\rm I}_3)\\
    &\quad+L\Dv_x(Q\triangle Q-\triangle QQ))d\xi+\int_{0}^{t}\Truca \mathbb{F}(r,\mathbf{u})dW,\\
    Q(t)&=Q_0-\int_{0}^{t}\Truca (\mathbf{u}\cdot\nabla_x Q-\Theta Q+Q\Theta) d\xi
        +\int_{0}^{t}\Gamma L\triangle Q+\Truca \mathcal{K}(Q)d\xi.
    \end{align*}
\end{enumerate}
\end{definition}

In this section, we shall obtain the following result.
\begin{theorem}\label{th4.2}
Assume the initial data $(r_0,\mathbf{u}_0,Q_0)$ satisfies the same conditions with Theorem \ref{th3.1} and the coefficient $\mathbb{G}$ satisfies the assumptions \eqref{2.5},\eqref{2.5*}. For any integer $s>\frac{9}{2}$, the system (\ref{qnt}) has a unique global strong pathwise solution in the sense of Definition \ref{def2}.
\end{theorem}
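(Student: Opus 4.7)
The plan is to establish Theorem \ref{th4.2} via a Yamada--Watanabe type argument: the strong martingale solution already constructed in Theorem \ref{th3.1} will be promoted to a strong pathwise solution on the prescribed stochastic basis once pathwise uniqueness is available. Concretely, the substantive task is to prove pathwise uniqueness for the truncated system \eqref{qnt}; the conclusion then follows from (a variant of) the Gy\"ongy--Krylov characterization, which guarantees that martingale existence together with pathwise uniqueness delivers a strong pathwise solution on the given stochastic basis.

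For pathwise uniqueness I would take two solutions $(r_1,\mathbf{u}_1,Q_1)$ and $(r_2,\mathbf{u}_2,Q_2)$ on a common stochastic basis sharing the same initial data, and derive the system satisfied by the differences $\bar r=r_1-r_2$, $\bar{\mathbf{u}}=\mathbf{u}_1-\mathbf{u}_2$, $\bar Q=Q_1-Q_2$. Each nonlinear contribution would be split as
\begin{equation*}
\Trucaa F_1 - \Trucab F_2 = \Trucaa (F_1-F_2) + \bigl(\Trucaa-\Trucab\bigr) F_2,
\end{equation*}
with the cut-off difference controlled by the mean value theorem,
\begin{equation*}
|\Trucaa-\Trucab| \leq C\bigl(\|\bar{\mathbf{u}}\|_{2,\infty}+\|\bar Q\|_{3,\infty}\bigr).
\end{equation*}
Then I would apply the It\^o formula to the low-order functional $\|\bar r\|^2 + \|\bar{\mathbf{u}}\|^2 + \|\sqrt{D(r_2)}\nabla_x \bar Q\|^2$, reusing the algebra of the a priori estimate in Section 4.2. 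In particular, the cross term $(\bar r\,\nabla_x r_1,\bar{\mathbf{u}})$ cancels with $(r_1\Dv_x\bar{\mathbf{u}},\bar r)$ after integration by parts, and the Lam\'e term generates a positive dissipation of the form $\Truca D(r_2)|\nabla_x\bar{\mathbf{u}}|^2$.

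The most delicate part of the argument is the coupling between the highest-order stress $\Dv_x(Q\triangle Q-\triangle QQ)$ in the $\bar{\mathbf{u}}$-equation and the rotation $\Theta Q-Q\Theta$ in the $\bar Q$-equation. I would test the $\bar{\mathbf{u}}$-equation against $\bar{\mathbf{u}}$ and the $\bar Q$-equation against $-D(r_2)\triangle\bar Q$, and then invoke Lemma \ref{lem2.4} with the scalar weight $f(r_2)=D(r_2)$ to cancel the piece $Q_1\triangle\bar Q$ against $(\bar\Theta)Q_1-Q_1(\bar\Theta)$. The residual contributions $\bar Q\,\triangle Q_2$ and $\nabla_x\bar Q\,\nabla_x Q_2$ are absorbed using the uniform bounds from Theorem \ref{th3.1} together with the Moser-type and commutator estimates of Lemma \ref{lem2.1}. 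The stochastic term is handled by assumption \eqref{2.5*} and the Burkholder--Davis--Gundy inequality \eqref{2.4}. The stricter requirement $s>\tfrac{9}{2}$ enters at the step where the cut-off difference $\|\bar Q\|_{3,\infty}$ must be controlled in terms of the norm carried by the uniqueness functional, using the Sobolev embedding $W^{s+1,2}(\mathbb{T})\hookrightarrow W^{3,\infty}(\mathbb{T})$ together with an interpolation against the high-order a priori bound. Once all contributions are assembled, a Gr\"onwall argument on the resulting differential inequality (localized by a stopping time if necessary) yields $\bar r\equiv \bar{\mathbf{u}}\equiv \bar Q\equiv 0$.

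I expect the main obstacle to be orchestrating the cancellation of the $Q\triangle Q$ contribution in the \emph{difference} equation while simultaneously tracking the non-smoothness of the weight $D(r_2)$ and the cut-off splitting, since $D(r_2)$ is only as regular as $r_2$ and does not commute freely with derivatives; a secondary, more routine, source of care is matching the stopping-time localization of the two solutions with the stochastic integral estimate so that \eqref{2.5*} closes Gr\"onwall. Once pathwise uniqueness is in place on the prescribed basis, the Gy\"ongy--Krylov procedure promotes the martingale solution of Theorem \ref{th3.1} to the strong pathwise solution of system \eqref{qnt}, completing the proof of Theorem \ref{th4.2}.
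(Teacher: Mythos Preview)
Your overall architecture---prove pathwise uniqueness for \eqref{qnt} and then invoke the Gy\"ongy--Krylov characterization to upgrade the martingale solution of Theorem \ref{th3.1} to a pathwise solution---matches the paper exactly. The decomposition of nonlinear terms via the cut-off splitting, the use of Lemma \ref{lem2.4} with weight $D(r_2)$ to cancel the top-order coupling, and the treatment of the noise via \eqref{2.5*} are all the right moves.

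The gap is in the choice of uniqueness functional. You propose to run the It\^o formula on the $L^2$-level quantity $\|\bar r\|^2 + \|\bar{\mathbf{u}}\|^2 + \|\sqrt{D(r_2)}\nabla_x\bar Q\|^2$ and to close the cut-off difference $\|\bar{\mathbf{u}}\|_{2,\infty}+\|\bar Q\|_{3,\infty}$ by interpolating against the high-order a priori bound. That interpolation only yields a \emph{sublinear} estimate of the form $|\Trucaa-\Trucab|\,\|F_2\|\,\|\bar{\mathbf{u}}\|\lesssim C(\omega)\,\mathcal{E}^{(1+\theta)/2}$ with $\theta<1$, and the resulting inequality $d\mathcal{E}\le C\,\mathcal{E}^{(1+\theta)/2}\,dt+dM$ does not force $\mathcal{E}\equiv 0$ from $\mathcal{E}(0)=0$ (think of $y'=y^{1/2}$). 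So Gr\"onwall does not close and uniqueness does not follow from the $L^2$ computation.

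The paper avoids this by carrying the whole difference estimate at the level $\|\bar r,\bar{\mathbf{u}}\|_{s-1,2}^2+\|\bar Q\|_{s,2}^2$: one applies $\partial_x^\alpha$ with $|\alpha|\le s-1$ to each equation, tests respectively against $\partial_x^\alpha\bar r$, $\partial_x^\alpha\bar{\mathbf{u}}$, and $-D(r_2)\triangle\partial_x^\alpha\bar Q$, and uses Lemma \ref{lem2.4} and the commutator bounds of Lemma \ref{lem2.1} at every order. Then the cut-off difference is controlled \emph{linearly} by the uniqueness functional itself via
\[
|\Trucaa-\Trucab|\le C\bigl(\|\bar{\mathbf{u}}\|_{2,\infty}+\|\bar Q\|_{3,\infty}\bigr)\le C\bigl(\|\bar{\mathbf{u}}\|_{s-1,2}+\|\bar Q\|_{s,2}\bigr),
\]
which is exactly the embedding $W^{s,2}\hookrightarrow W^{3,\infty}$ for $s>\tfrac{9}{2}$ (not $W^{s+1,2}\hookrightarrow W^{3,\infty}$, which would only need $s>\tfrac{7}{2}$). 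This is the true reason the integer constraint tightens from $s>\tfrac{7}{2}$ (martingale existence) to $s>\tfrac{9}{2}$ (pathwise uniqueness). After this, the Gr\"onwall step is genuinely linear and is implemented via the It\^o product rule with the integrating factor $\exp\bigl(-\int_0^t G\bigr)$, where $G$ collects the random coefficients $(1+\|r_j,\mathbf{u}_j\|_{s,2}^2+\|Q_j\|_{s+1,2}^3)(1+\|\mathbf{u}_j\|_{s+1,2}^2+\|Q_j\|_{s+2,2}^2)$.
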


Following the Yamada-Watanabe argument, the pathwise uniqueness in probability "1" in turn reveals that the solution is also strong in probability sense, this means the solution is constructed with respect to the fixed probability space in advance. Therefore, we next establish the pathwise uniqueness.
\begin{proposition}\label{pro3.3}{\rm (Uniqueness)} Fix any integer $s>\frac{9}{2}$. Suppose that $\mathbb{G}$ satisfies assumption \eqref{2.5*}, and  $((\mathcal{S},r_1, \mathbf{u}_{1}, Q_1)$, $(\mathcal{S},r_2, \mathbf{u}_{2}, Q_{2}))$ are two martingale solutions of system (\ref{qnt}) with the same stochastic basis $\mathcal{S}:=(\Omega,\mathcal{F},\{\mathcal{F}_{t}\}_{t\geq 0},\mathbb{P}, W)$. Then if
$$\mathbb{P}\{(r_1(0), \mathbf{u}_{1}(0), Q_{1}(0))=(r_2(0), \mathbf{u}_{2}(0),Q_{2}(0))\}=1,$$
then pathwise uniqueness holds in the sense of Definition \ref{de1}.
\end{proposition}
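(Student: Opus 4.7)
The plan is a pathwise energy estimate for the difference $(\bar r,\bar{\mathbf{u}},\bar Q):=(r_1-r_2,\mathbf{u}_1-\mathbf{u}_2,Q_1-Q_2)$, closed by the stochastic Gronwall lemma. First I would subtract the two copies of system \eqref{qnt} to obtain a coupled system for $(\bar r,\bar{\mathbf{u}},\bar Q)$. Its right-hand side naturally splits into \emph{same-cutoff} contributions (such as $\Trucaa(\mathbf{u}_1\cdot\nabla_x\bar r+\bar{\mathbf{u}}\cdot\nabla_x r_2+\tfrac{\gamma-1}{2}(r_2\,\Dv_x\bar{\mathbf{u}}+\bar r\,\Dv_x\mathbf{u}_1))$) and \emph{cutoff-difference} contributions $(\Trucaa-\Trucab)\,F(r_2,\mathbf{u}_2,Q_2)$; the latter will be controlled through the mean-value bound
\[
|\Trucaa-\Trucab|\le C\bigl(\|\bar{\mathbf{u}}\|_{2,\infty}+\|\bar Q\|_{3,\infty}\bigr)
\]
combined with the uniform $W^{s,2}\times W^{s,2}\times W^{s+1,2}$ bound from Theorem \ref{th3.1}. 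The stochastic increment $\Trucaa\,\mathbb{F}(r_1,\mathbf{u}_1)-\Trucab\,\mathbb{F}(r_2,\mathbf{u}_2)$ is treated analogously, but now using assumption \eqref{2.5*} via Remark \ref{rem2.6}.

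Next I would apply the It\^o formula to the weighted energy
\[
\mathcal{E}(t):=\tfrac12\|\bar r(t)\|^2+\tfrac12\|\bar{\mathbf{u}}(t)\|^2+\tfrac12\|\sqrt{D(r_1)}\,\nabla_x\bar Q(t)\|^2,
\]
testing the mass equation against $\bar r$, the momentum equation against $\bar{\mathbf{u}}$, and the $Q$-equation against $-D(r_1)\triangle\bar Q$, mirroring Section 4.2 at derivative order zero. The weight $D(r_1)$ is what makes a single application of Lemma \ref{lem2.4} (with $f(r)=D(r_1)$ and $Q'=Q_2$) cancel the most dangerous top-order piece: splitting $Q_1\triangle Q_1-\triangle Q_1\,Q_1-(Q_2\triangle Q_2-\triangle Q_2\,Q_2)=(\bar Q\triangle Q_1-\triangle Q_1\,\bar Q)+(Q_2\triangle\bar Q-\triangle\bar Q\,Q_2)$ and, analogously, $(\Theta_1 Q_1-Q_1\Theta_1)-(\Theta_2 Q_2-Q_2\Theta_2)=(\bar\Theta Q_2-Q_2\bar\Theta)+(\Theta_1\bar Q-\bar Q\,\Theta_1)$, the integrals $\int D(r_1)(Q_2\triangle\bar Q-\triangle\bar Q\,Q_2):\nabla_x\bar{\mathbf{u}}^{\mathrm T}\,dx$ and $\int D(r_1)(\bar\Theta Q_2-Q_2\bar\Theta):\triangle\bar Q\,dx$ cancel exactly. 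The remaining lower-order pieces (including contributions of $\nabla_x D(r_1)$ and of $\partial_t D(r_1)$) are absorbed by the uniform bounds on $(r_i,\mathbf{u}_i,Q_i)$ together with the parabolic dissipation $\Gamma L\|\sqrt{D(r_1)}\triangle\bar Q\|^2$.

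All other right-hand side terms reduce, after commutator and H\"older estimates parallel to \eqref{19}--\eqref{4.15}, to expressions bounded by $C(R,T)\,\mathcal{E}(t)$ or by $C(R,T)\bigl(\|\bar{\mathbf{u}}\|_{2,\infty}^2+\|\bar Q\|_{3,\infty}^2\bigr)\bigl(\|r_i,\mathbf{u}_i\|_{s,2}^2+\|Q_i\|_{s+1,2}^2\bigr)$. To close, the factor $\|\bar{\mathbf{u}}\|_{2,\infty}^2+\|\bar Q\|_{3,\infty}^2$ must be reabsorbed into $\mathcal{E}$ up to a pathwise integrable random coefficient; this is where the integer hypothesis $s>\tfrac{9}{2}$ becomes mandatory, as it is the smallest integer for which the Sobolev embeddings
\[
W^{s,2}(\mathbb{T})\hookrightarrow W^{2,\infty}(\mathbb{T}),\qquad W^{s+1,2}(\mathbb{T})\hookrightarrow W^{3,\infty}(\mathbb{T})
\]
hold with the margin needed to absorb the extra derivative cost that the cutoff-difference mechanism introduces against the uniform bound on $(\mathbf{u}_i,Q_i)$. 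The stochastic correction is controlled by the Burkholder-Davis-Gundy inequality \eqref{2.4} in combination with \eqref{2.5*}, producing the same shape.

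Taking expectation, invoking the stochastic Gronwall lemma for $\mathbb{E}\sup_{\xi\in[0,t]}\mathcal{E}(\xi)+\mathbb{E}\int_0^t\Gamma L\|\sqrt{D(r_1)}\triangle\bar Q\|^2\,d\xi$, and using $\mathcal{E}(0)=0$ from the coincidence of initial data, I would conclude $\mathcal{E}\equiv 0$, whence $(r_1,\mathbf{u}_1,Q_1)=(r_2,\mathbf{u}_2,Q_2)$ on $[0,T]$, $\mathbb{P}$-a.s. The main obstacle is the simultaneous treatment of the top-order term $\Dv_x(Q\triangle Q-\triangle Q\,Q)$ in the momentum equation and its cancelling partner $\Theta Q-Q\Theta$ in the $Q$-equation: without the weight $D(r_1)$ and the exact symmetric structure of Lemma \ref{lem2.4}, the piece $Q_2\triangle\bar Q-\triangle\bar Q\,Q_2$ would persist at one derivative above the level accessible to the dissipation of $\bar Q$, and the whole estimate would fail to close.
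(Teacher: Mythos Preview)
Your overall architecture---difference system, weighted energy with $D(r_i)$, Lemma \ref{lem2.4} for the top-order cancellation, exponential/Gronwall closure---is exactly the right one, and your identification of the dangerous pairing $Q_2\triangle\bar Q-\triangle\bar Q\,Q_2$ versus $\bar\Theta Q_2-Q_2\bar\Theta$ is correct. However, there is a genuine gap at the level at which you run the energy.

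You choose the \emph{$L^2$-level} functional
\[
\mathcal{E}(t)=\tfrac12\|\bar r\|^2+\tfrac12\|\bar{\mathbf{u}}\|^2+\tfrac12\|\sqrt{D(r_1)}\nabla_x\bar Q\|^2,
\]
and then write that the cutoff-difference contributions produce the factor $\|\bar{\mathbf{u}}\|_{2,\infty}^2+\|\bar Q\|_{3,\infty}^2$, which ``must be reabsorbed into $\mathcal{E}$ up to a pathwise integrable random coefficient''. This step cannot be carried out: no Sobolev or interpolation inequality bounds $\|\bar{\mathbf{u}}\|_{2,\infty}$ by $\|\bar{\mathbf{u}}\|_{L^2}$ times a coefficient depending only on the solutions $(r_i,\mathbf{u}_i,Q_i)$. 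An interpolation $\|\bar{\mathbf{u}}\|_{2,\infty}\le C\|\bar{\mathbf{u}}\|^{1-\theta}\|\bar{\mathbf{u}}\|_{s,2}^{\theta}$ yields a \emph{sublinear} power of $\mathcal E$ on the right-hand side, and a differential inequality $d\mathcal E\le C(t)\,\mathcal E^{\,1-\theta}\,dt$ with $\theta>0$ and $\mathcal E(0)=0$ does \emph{not} force $\mathcal E\equiv 0$. The embeddings $W^{s,2}\hookrightarrow W^{2,\infty}$ and $W^{s+1,2}\hookrightarrow W^{3,\infty}$ that you invoke concern the solutions, not the difference, and do not help here.

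The remedy---and the actual reason for the hypothesis $s>\tfrac92$---is to run the whole estimate at the $W^{s-1,2}\times W^{s-1,2}\times W^{s,2}$ level: apply $\partial_x^\alpha$ with $|\alpha|\le s-1$ to the difference system, test against $\partial_x^\alpha\bar r$, $\partial_x^\alpha\bar{\mathbf{u}}$, and $-D(r_i)\triangle\partial_x^\alpha\bar Q$, and control the energy $\|\bar r,\bar{\mathbf{u}}\|_{s-1,2}^2+\|\sqrt{D(r_i)}\nabla_x\bar Q\|_{s-1,2}^2$. Then the cutoff-difference factor is absorbed \emph{linearly} via
\[
|\Trucaa-\Trucab|\le C\bigl(\|\bar{\mathbf{u}}\|_{2,\infty}+\|\bar Q\|_{3,\infty}\bigr)\le C\bigl(\|\bar{\mathbf{u}}\|_{s-1,2}+\|\bar Q\|_{s,2}\bigr),
\]
which needs precisely $s-1>\tfrac72$ and $s>\tfrac92$. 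At that level the commutator and product estimates generate a coefficient involving $\|\mathbf{u}_j\|_{s+1,2}^2+\|Q_j\|_{s+2,2}^2$, which is only $L^1$ in time; this is handled (as in the paper) by the It\^o product rule with the weight $\exp\bigl(-\int_0^t G(\tau)\,d\tau\bigr)$ rather than by a direct Gronwall argument. With this modification your scheme goes through; the choice between $D(r_1),Q_2$ and $D(r_2),Q_1$ in the cancellation is immaterial.
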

\begin{proof}
Owing to the complexity of constitution and the similarity of argument with the a priori estimate, here we only focus on the estimate of high-order nonlinearity term. Let $\alpha$ be any vector such that $|\alpha|\leq s-1$, taking the difference of $r_1$ and $r_2$, then $\alpha$-order derivative, we have
\begin{align}\label{unr}
     &d \partial_x^{\alpha}(r_1-r_2)\nonumber\\
     =&-\Trucaa \partial_x^{\alpha}\left(\mathbf{u}_1\cdot\nabla_xr_1+\frac{\gamma-1}{2}r_1\Dv_x\mathbf{u}_1\right)dt\nonumber\\
     &+\Trucab \partial_x^{\alpha}\left(\mathbf{u}_2\cdot\nabla_xr_2+\frac{\gamma-1}{2}r_2\Dv_x\mathbf{u}_2\right)dt\nonumber\\
     =&-\left(\Trucaa-\Trucab\right)\partial_x^{\alpha}\left(\mathbf{u}_1\cdot\nabla_xr_1+\frac{\gamma-1}{2}r_1\Dv_x\mathbf{u}_1\right)dt\nonumber\\
      &-\Trucab\partial_x^{\alpha}\bigg(\mathbf{u}_2\cdot\nabla_x(r_1-r_2)+(\mathbf{u}_1-\mathbf{u}_2)\cdot\nabla_xr_1\nonumber\\
      &\qquad\qquad\quad+\frac{\gamma-1}{2}(r_1-r_2)\Dv_x\mathbf{u}_1+\frac{\gamma-1}{2}r_2\Dv_x(\mathbf{u}_1-\mathbf{u}_2)\bigg)dt.
     \end{align}
Multiplying \eqref{unr} by $\partial_x^{\alpha}(r_1-r_2)$ and integrating over $\mathbb{T}$, then the highest order term can be treated as follows
\begin{align*}
  &\quad-\Trucab\int_{\mathbb{T}}\left(\mathbf{u}_2\cdot\nabla_x\partial_x^{\alpha}(r_1-r_2)
  +\frac{\gamma-1}{2}r_2\Dv_x\partial_x^{\alpha}(\mathbf{u}_1-\mathbf{u}_2)\right)\cdot\partial_x^{\alpha}(r_1-r_2)dx\\
  &=\frac{1}{2}\Trucab\int_{\mathbb{T}}\Dv_x\mathbf{u}_2|\partial_x^{\alpha}(r_1-r_2)|^2dx-\frac{\gamma-1}{2}\Trucab
  \int_{\mathbb{T}}r_2\Dv_x\partial_x^{\alpha}(\mathbf{u}_1-\mathbf{u}_2)\cdot\partial_x^{\alpha}(r_1-r_2)dx.
\end{align*}
From the smoothness of $\Phi$, the mean value theorem and the Sobolev embedding, we have for $s>\frac{3}{2}+3$
\begin{align}\label{5.2}
\left|\Trucaa-\Trucab\right|&\leq C(\|\mathbf{u}_1-\mathbf{u}_2\|_{2,\infty}+\|Q_1-Q_2\|_{3,\infty})\nonumber\\ &\leq
C(\|\mathbf{u}_1-\mathbf{u}_2\|_{s-1,2}+\|Q_1-Q_2\|_{s,2}).
\end{align}
Thus we get from above estimates
\begin{align}\label{u1}
  &\quad\frac{1}{2}d\|\partial_x^{\alpha}(r_1-r_2)\|^2\nonumber\\
  &\leq C(R)\left(1+\sum_{j=1}^{2}\|r_j, \mathbf{u}_j\|_{s,2}^2\right)(\|r_1-r_2, \mathbf{u}_1-\mathbf{u}_2\|_{s-1,2}^2+\|Q_1-Q_2\|_{s,2}^2)dt\nonumber\\
  &\quad-\frac{\gamma-1}{2}\Trucab\int_{\mathbb{T}}r_2\Dv_x\partial_x^{\alpha}(\mathbf{u}_1-\mathbf{u}_2)\cdot\partial_x^{\alpha}(r_1-r_2)dx dt.
\end{align}

Similarly, for $\mathbf{u}_1$ and $\mathbf{u}_2$, we have the equation
    \begin{align}\label{unv}
    &d\partial_x^{\alpha}(\mathbf{u}_1-\mathbf{u}_2)\nonumber\\
     =&-\Trucaa \partial_x^{\alpha}\left(\mathbf{u}_1\cdot\nabla_x \mathbf{u}_1+r_1\nabla_x r_1-D(r_1)\mathcal{L}\mathbf{u}_1\right)dt\nonumber\\
     &+\Trucab \partial_x^{\alpha}\left(\mathbf{u}_2\cdot\nabla_x \mathbf{u}_2+r_2\nabla_x r_2-D(r_2)\mathcal{L}\mathbf{u}_2\right)dt\nonumber\\
     &-\Trucaa\partial_x^{\alpha}\left(D(r_1)\Dv_x(L\nabla_x Q_1\odot \nabla_xQ_1-\mathcal{F}(Q_1){\rm I}_3-L(Q_1\triangle Q_1-\triangle Q_1Q_1))\right)dt\nonumber\\
     &+\Trucab\partial_x^{\alpha}\left(D(r_2)\Dv_x(L\nabla_x Q_2\odot \nabla_xQ_2-\mathcal{F}(Q_2){\rm I}_3-L(Q_2\triangle Q_2-\triangle Q_2Q_2))\right)dt\nonumber\\
     &+\Trucaa \partial_x^{\alpha}\mathbb{F}(r_1,\mathbf{u}_1)dW-\Trucab \partial_x^{\alpha}\mathbb{F}(r_2,\mathbf{u}_2)dW\nonumber\\
     =&-\left(\Trucaa-\Trucab\right) \partial_x^{\alpha}\big(\mathbf{u}_1\cdot\nabla_x \mathbf{u}_1+r_1\nabla_x r_1-D(r_1)\mathcal{L}\mathbf{u}_1\big)dt\nonumber\\
     &-\Trucab \partial_x^{\alpha}\big((\mathbf{u}_1-\mathbf{u}_2)\cdot\nabla_x \mathbf{u}_1+\mathbf{u}_1\cdot\nabla_x(\mathbf{u}_1-\mathbf{u}_2)
     +(r_1-r_2)\nabla_x r_1+r_2\nabla_x (r_1-r_2)\nonumber\\
     &\qquad\qquad-(D(r_1)-D(r_2))\mathcal{L}\mathbf{u}_1-D(r_2)\mathcal{L}(\mathbf{u}_1-\mathbf{u}_2)\big)dt\nonumber\\
     &-\left(\Trucaa-\Trucab\right)\partial_x^{\alpha}\bigg(D(r_1)\Dv_x(L\nabla_x Q_1\odot \nabla_xQ_1-\mathcal{F}(Q_1){\rm I}_3\nonumber\\
     &\qquad\qquad\qquad\qquad\quad-L(Q_1\triangle Q_1-\triangle Q_1Q_1))\bigg)dt\nonumber\\
     &-\Trucab\partial_x^{\alpha}\bigg(\left(D(r_1)-D(r_2)\right)\Dv_x(L\nabla_x Q_1\odot \nabla_xQ_1-\mathcal{F}(Q_1){\rm I}_3\nonumber\\
     &\qquad\qquad-L(Q_1\triangle Q_1-\triangle Q_1Q_1))\bigg)dt\nonumber\\
     &-\Trucab\partial_x^{\alpha}\bigg(D(r_2)\Dv_x(L\nabla_x(Q_1-Q_2)\odot\nabla_xQ_1+L\nabla_x Q_2\odot\nabla_x(Q_1-Q_2)\nonumber\\
     &\qquad\qquad-(\mathcal{F}(Q_1)-\mathcal{F}(Q_2)){\rm I}_3)\bigg)dt\nonumber\\
     &+\Trucab\partial_x^{\alpha}\bigg(D(r_2)\Dv_xL(Q_1\triangle(Q_1-Q_2)-\triangle(Q_1-Q_2) Q_1+(Q_1-Q_2)\triangle Q_2\nonumber\\
     &\qquad\qquad+\triangle Q_2 (Q_1-Q_2))\bigg)dt\nonumber\\
     &+\left(\Trucaa\partial_x^{\alpha}\mathbb{F}(r_1,\mathbf{u}_1)-\Trucab \partial_x^{\alpha}\mathbb{F}(r_2,\mathbf{u}_2)\right)dW.
    \end{align}
Applying the It\^o formula to function $\frac{1}{2}\|\partial_x^{\alpha}(\mathbf{u}_1-\mathbf{u}_2)\|^2$, then the high-order term in the formula reads
\begin{align*}
  &\quad-\Trucab \int_{\mathbb{T}}r_2\nabla_x\partial_x^{\alpha}(r_1-r_2)\cdot\partial_x^{\alpha}(\mathbf{u}_1-\mathbf{u}_2)dx \\
  &= \Trucab \int_{\mathbb{T}}r_2\Dv_x\partial_x^{\alpha}(\mathbf{u}_1-\mathbf{u}_2)\cdot\partial_x^{\alpha}(r_1-r_2)dx+
  \Trucab \int_{\mathbb{T}}\nabla_xr_2\partial_x^{\alpha}(\mathbf{u}_1-\mathbf{u}_2)\partial_x^{\alpha}(r_1-r_2)dx\\
  &\leq C(R)\|r_1-r_2, \mathbf{u}_1-\mathbf{u}_2\|_{s-1,2}^2+\Trucab \int_{\mathbb{T}}r_2\Dv_x\partial_x^{\alpha}(\mathbf{u}_1-\mathbf{u}_2)\cdot\partial_x^{\alpha}(r_1-r_2)dx.
\end{align*}
The last integral in above could be cancelled with the last term in \eqref{u1} after matching the constant. What's more, integration by parts and the H\"{o}lder inequality give
\begin{align*}
   &\quad \Trucab \int_{\mathbb{T}}D(r_2)\mathcal{L}(\partial_x^{\alpha}(\mathbf{u}_1-\mathbf{u}_2))
   \cdot\partial_x^{\alpha}(\mathbf{u}_1-\mathbf{u}_2)dx\\
&=-\Trucab\int_{\mathbb{T}}D(r_2)(\upsilon|\nabla_x\partial_x^{\alpha}(\mathbf{u}_1-\mathbf{u}_2)|^2
+(\upsilon+\lambda)|\Dv_x\partial_x^{\alpha}(\mathbf{u}_1-\mathbf{u}_2)|^2)dx\\
 &\quad-\upsilon\Trucab\int_{\mathbb{T}}\nabla_xD(r_2)\nabla_x\partial_x^{\alpha}(\mathbf{u}_1-\mathbf{u}_2)
  \cdot\partial_x^{\alpha}(\mathbf{u}_1-\mathbf{u}_2)dx\\
 &\quad-(\upsilon+\lambda)\Trucab\int_{\mathbb{T}}\nabla_xD(r_2)\Dv_x\partial_x^{\alpha}(\mathbf{u}_1-\mathbf{u}_2)
  \cdot\partial_x^{\alpha}(\mathbf{u}_1-\mathbf{u}_2)dx\\
&\leq C(R)\|\mathbf{u}_1-\mathbf{u}_2\|_{s-1,2}^2\\&\quad-\Trucab\int_{\mathbb{T}}D(r_2)(\upsilon|\nabla_x\partial_x^{\alpha}(\mathbf{u}_1-\mathbf{u}_2)|^2
+(\upsilon+\lambda)|\Dv_x\partial_x^{\alpha}(\mathbf{u}_1-\mathbf{u}_2)|^2)dx\\
&\quad+\frac{1}{4}\Trucab \int_{\mathbb{T}}D(r_2)(\upsilon|\nabla_x\partial_x^{\alpha}(\mathbf{u}_1-\mathbf{u}_2)|^2+(\upsilon+\lambda)|\Dv_x\partial_x^{\alpha}(\mathbf{u}_1-\mathbf{u}_2)|^2)dx,
\end{align*}
as well as using Lemma \ref{lem2.4}
\begin{align*}
  &\Trucab\int_{\mathbb{T}}D(r_2)L\Dv_x(Q_1\triangle \partial_x^{\alpha}(Q_1-Q_2)-\triangle \partial_x^{\alpha}(Q_1-Q_2)Q_1\\
  &\qquad\quad+(Q_1-Q_2)\triangle \partial_x^{\alpha}Q_2-\triangle \partial_x^{\alpha}Q_2 (Q_1-Q_2))
  \cdot \partial_x^{\alpha}(\mathbf{u}_1-\mathbf{u}_2)dx\\
  =&-\Trucab\int_{\mathbb{T}}D(r_2)L(Q_1\triangle \partial_x^{\alpha}(Q_1-Q_2)-\triangle \partial_x^{\alpha}(Q_1-Q_2) Q_1): \partial_x^{\alpha}\nabla_x(\mathbf{u}_1-\mathbf{u}_2)^{{\rm T}}dx\\
  &-\Trucab\int_{\mathbb{T}}\nabla_xD(r_2)L(Q_1\triangle \partial_x^{\alpha}(Q_1-Q_2)-\triangle \partial_x^{\alpha}(Q_1-Q_2) Q_1)\cdot
  \partial_x^{\alpha}(\mathbf{u}_1-\mathbf{u}_2)dx\\
  &+\Trucab\int_{\mathbb{T}}D(r_2)L\Dv_x((Q_1-Q_2)\triangle \partial_x^{\alpha}Q_2-\triangle \partial_x^{\alpha}Q_2 (Q_1-Q_2))\cdot
   \partial_x^{\alpha}(\mathbf{u}_1-\mathbf{u}_2)dx\\
  \leq& -\Trucab\int_{\mathbb{T}}D(r_2)L(\partial_x^{\alpha}(\Theta_1-\Theta_2)Q_1-Q_1\partial_x^{\alpha}(\Theta_1-\Theta_2))
  :\triangle \partial_x^{\alpha}(Q_1-Q_2)dx\\
  &+C(R)\left(\sum_{j=1}^{2}\|Q_j\|_{s+2,2}^2\right)(\|\mathbf{u}_1-\mathbf{u}_2\|_{s-1,2}^2+\|Q_1-Q_2\|_{s,2}^2)\\
  &+\frac{\Gamma L}{8}\int_{\mathbb{T}}D(r_2)|\triangle \partial_x^\alpha(Q_1-Q_2)|^2dx.
\end{align*}
By Lemma \ref{lem2.1}, estimate (\ref{4.4}), we have
\begin{align*}
&\quad\int_{\mathbb{T}}\Trucab\partial_x^{\alpha}\big(\left(D(r_1)-D(r_2)\right)L\Dv_x(Q_1\triangle Q_1-\triangle Q_1Q_1)\big)\cdot \partial_x^{\alpha}(\mathbf{u}_1-\mathbf{u}_2)dx\nonumber\\
&\leq \Trucab \left\|\partial_x^{\alpha}\big(\left(D(r_1)-D(r_2)\right)L\Dv_x(Q_1\triangle Q_1-\triangle Q_1Q_1)\big)\right\|\|\partial_x^{\alpha}(\mathbf{u}_1-\mathbf{u}_2)\|\nonumber\\
&\leq C\Trucab (\|r_1,r_2\|_{s,2}\|Q_1\|_{s,2}^2\|r_1-r_2\|_{s-1,2}+\|Q_1\|_{s,2}\|Q_1\|_{s+2,2}\|r_1,r_2\|_{s,2}\|r_1-r_2\|_{s-1,2})\nonumber\\
&\qquad\qquad\quad\times\|\partial_x^{\alpha}(\mathbf{u}_1-\mathbf{u}_2)\|\nonumber\\
&\leq C(\|r_1,r_2\|_{s,2}\|Q_1\|_{s,2}^2+\|Q_1\|_{s,2}\|Q_1\|_{s+2,2}\|r_1,r_2\|_{s,2})\|r_1-r_2, \mathbf{u}_1-\mathbf{u}_2\|_{s-1,2}^2.
\end{align*}
Finally, by Lemma \ref{lem2.1} and the H\"{o}lder inequality
\begin{align*}
&\quad\int_{\mathbb{T}}\Trucab\partial_x^{\alpha}\big(D(r_2)\Dv_x({\rm tr^2}(Q_1^2){\rm I}_3-{\rm tr^2}(Q_2^2){\rm I}_3)\big)\cdot\partial_x^{\alpha}(\mathbf{u}_1-\mathbf{u}_2)dx\nonumber\\
&\leq \Trucab\|\partial_x^{\alpha}(\mathbf{u}_1-\mathbf{u}_2)\|\left\|\partial_x^{\alpha}\big(D(r_2)\Dv_x({\rm tr^2}(Q_1^2){\rm I}_3-{\rm tr^2}(Q_2^2){\rm I}_3)\big)\right\|\nonumber\\
&\leq \Trucab\|\mathbf{u}_1-\mathbf{u}_2\|_{s-1,2}\nonumber\\
&\quad\times(\|D(r_2)\|_{\infty}\|{\rm tr^2}(Q_1^2)-{\rm tr^2}(Q_2^2)\|_{s,2}
+\|D(r_2)\|_{s,2}\|{\rm tr^2}(Q_1^2)-{\rm tr^2}(Q_2^2)\|_{1,\infty})\nonumber\\
&\leq C(R)(1+\|Q_1\|^3_{1,\infty})\|r_2\|_{s,2}\|\mathbf{u}_1-\mathbf{u}_2\|_{s-1,2}\|Q_1-Q_2\|_{s,2}\nonumber\\
&\quad+C(R)(1+\|Q_1, Q_2\|_{s,2}^3)\|\mathbf{u}_1-\mathbf{u}_2\|_{s-1,2}\|Q_1-Q_2\|_{s,2}.
\end{align*}

Since the order of the rest of nonlinearity terms is lower than above, these terms can be handled using the same way, so we skip the details. In summary, we could get
\begin{align}\label{u2}
  &\quad\frac{1}{2}d\|\partial_x^{\alpha}(\mathbf{u}_1-\mathbf{u}_2)\|^2\nonumber\\ &\quad+\Trucab\int_{\mathbb{T}}D(r_2)(\upsilon|\nabla_x\partial_x^{\alpha}(\mathbf{u}_1-\mathbf{u}_2)|^2
+(\upsilon+\lambda)|\Dv_x\partial_x^{\alpha}(\mathbf{u}_1-\mathbf{u}_2)|^2)dxdt\nonumber\\
  &\leq C(R)\sum_{j=1}^{2}(1+\|r_j, u_j\|_{s,2}^2+\|Q_j\|^3_{s+1,2})(1+\|\mathbf{u}_j\|_{s+1,2}^2+\|Q_j\|_{s+2,2}^2)\nonumber\\
     &\quad\times(\|r_1-r_2, \mathbf{u}_1-\mathbf{u}_2\|_{s-1,2}^2+\|Q_1-Q_2\|_{s,2}^2)dt\nonumber\\
     &\quad+\Trucab \int_{\mathbb{T}}r_2\Dv_x\partial_x^{\alpha}(\mathbf{u}_1-\mathbf{u}_2)\cdot\partial_x^{\alpha}(r_1-r_2)dxdt
     +\frac{\Gamma L}{2}\int_{\mathbb{T}}D(r_2)|\triangle \partial_x^\alpha(Q_1-Q_2)|^2dxdt\nonumber\\
     &\quad-\Trucab\int_{\mathbb{T}}D(r_2)L(\partial_x^{\alpha}(\Theta_1-\Theta_2)Q_1-Q_1\partial_x^{\alpha}(\Theta_1-\Theta_2))
  :\triangle \partial_x^{\alpha}(Q_1-Q_2)dxdt\nonumber\\
     &\quad+\left(\Trucaa\partial_x^{\alpha}\mathbb{F}(r_1,\mathbf{u}_1)-\Trucab \partial_x^{\alpha}\mathbb{F}(r_2,\mathbf{u}_2), \partial_x^{\alpha}(\mathbf{u}_1-\mathbf{u}_2)\right)dW\nonumber\\
     &\quad+\frac{1}{2}\left\|\Trucaa\partial_x^{\alpha}\mathbb{F}(r_1,\mathbf{u}_1)
     -\Trucab\partial_x^{\alpha}\mathbb{F}(r_2,\mathbf{u}_2)\right\|_{L_2(\mathfrak{U};L^2(\mathbb{T},\mathbb{R}^3))}^2dt.
\end{align}

The second and forth terms on the right side of \eqref{u2} could be cancelled later. By assumptions \eqref{2.5},\eqref{2.5*}, we could handle
\begin{eqnarray*}
&&\quad\left\|\Trucaa\partial_x^{\alpha}\mathbb{F}(r_1,\mathbf{u}_1)-\Trucab\partial_x^{\alpha}\mathbb{F}(r_2,\mathbf{u}_2)\right\|_{L_2(\mathfrak{U};L^2(\mathbb{T},\mathbb{R}^3))}^2\\
&&\leq \left\|\left(\Trucaa-\Trucab\right)\partial_x^{\alpha}\mathbb{F}(r_1,\mathbf{u}_1)\right\|_{L_2(\mathfrak{U};L^2(\mathbb{T},\mathbb{R}^3))}^2\\&&\quad+\left\|\Trucab \partial_x^{\alpha}(\mathbb{F}(r_1,\mathbf{u}_1)-\mathbb{F}(r_2,\mathbf{u}_2))\right\|_{L_2(\mathfrak{U};L^2(\mathbb{T},\mathbb{R}^3))}^2\\
&&\leq C(R)\sum_{i=1}^{2}(1+\|r_i,\mathbf{u}_i\|_{s}^2)(\|r_1-r_2,\mathbf{u}_1-\mathbf{u}_2\|_{s-1,2}^2+\|Q_1-Q_2\|_{s,2}^2).
\end{eqnarray*}

For the $Q$-tensor equation, we also get
    \begin{align}\label{unq}
    &d\partial_x^{\alpha}(Q_1-Q_2)-\Gamma L\triangle \partial_x^{\alpha}(Q_1-Q_2)dt\nonumber\\
    =&-\Trucaa \partial_x^{\alpha}(\mathbf{u}_1\cdot\nabla_x Q_1-\Theta_1 Q_1+Q_1\Theta_1-\mathcal{K}(Q_1)) dt\nonumber\\
        &+\Trucab \partial_x^{\alpha}(\mathbf{u}_2\cdot\nabla_x Q_2-\Theta_2 Q_2+Q_2\Theta_2-\mathcal{K}(Q_2)) dt\nonumber\\
    =&-\left(\Trucaa-\Trucab\right) \partial_x^{\alpha}(\mathbf{u}_1\cdot\nabla_x Q_1-\Theta_1 Q_1+Q_1\Theta_1-\mathcal{K}(Q_1)) dt\nonumber\\
    &-\Trucab\partial_x^{\alpha}((\mathbf{u}_1-\mathbf{u}_2)\cdot\nabla_x Q_1+\mathbf{u}_2\cdot\nabla_x (Q_1-Q_2))dt\nonumber\\
    &-\Trucab\partial_x^{\alpha}((\Theta_1-\Theta_2)Q_1-Q_1(\Theta_1-\Theta_2)+\Theta_2(Q_1-Q_2)-(Q_1-Q_2)\Theta_2\nonumber\\
    &\qquad\qquad+\mathcal{K}(Q_1)-\mathcal{K}(Q_2))dt.
    \end{align}
Multiplying \eqref{unq} by $-D(r_2)\partial_x^{\alpha}\triangle(Q_1-Q_2)$ on both sides, taking the trace and integrating over $\mathbb{T}$, as the a priori estimates, we consider the first term
\begin{align*}
  &\quad-\int_{\mathbb{T}}\partial_x^{\alpha}(Q_1-Q_2)_t:D(r_2)\partial_x^{\alpha}\triangle(Q_1-Q_2)dx \\
  &=\frac{1}{2}\partial_t\int_{\mathbb{T}}D(r_2)|\partial_x^{\alpha}\nabla_x(Q_1-Q_2)|^2dx
  -\frac{1}{2}\int_{\mathbb{T}}D(r_2)_t|\partial_x^{\alpha}\nabla_x(Q_1-Q_2)|^2dx\\
  &\quad+\int_{\mathbb{T}}\nabla_xD(r_2)\partial_x^{\alpha}\nabla_x(Q_1-Q_2):\partial_x^{\alpha}(Q_1-Q_2)_tdx.
\end{align*}
Using \eqref{4.30} once more, similar estimate as \eqref{4.31} , estimate \eqref{2.1} and Lemma \ref{lem2.1}, the H\"{o}lder inequality
\begin{align*}
  &\quad\left|\frac{1}{2}\int_{\mathbb{T}}D(r_2)_t|\partial_x^{\alpha}\nabla_x(Q_1-Q_2)|^2dx\right|\leq C(R)\|Q_1-Q_2\|_{s,2}^2,\\
  &\quad\left|\int_{\mathbb{T}}\nabla_xD(r_2)\partial_x^{\alpha}\nabla_x(Q_1-Q_2):\partial_x^{\alpha}(Q_1-Q_2)_tdx\right|\\
  &\leq C(R)\|Q_1-Q_2\|_{s,2}\bigg(\|Q_1-Q_2\|_{s+1,2}+\Trucab\|Q_1\|_{s,2}\|\mathbf{u}_1-\mathbf{u}_2\|_{s,2}\\
  &\qquad\qquad\quad\qquad\quad+\sum_{j=1}^{2}(\|\mathbf{u}_j\|_{s+1,2}+\|Q_j\|_{s+2,2})(\|\mathbf{u}_1-\mathbf{u}_2\|_{s-1,2}+\|Q_1-Q_2\|_{s,2})\bigg)\\
  &\leq \frac{\Gamma L}{8}\int_{\mathbb{T}}D(r_2)|\triangle \partial_x^\alpha(Q_1-Q_2)|^2dx+\frac{\upsilon}{8}\Trucab \int_{\mathbb{T}}D(r_2)|\partial_x^s(\mathbf{u}_1-\mathbf{u}_2)|^2dx\\
  &\quad+C(R)\sum_{j=1}^{2}(1+\|Q_j\|_{s,2}^2+\|\mathbf{u}_j\|_{s+1,2}^2+\|Q_j\|_{s+2,2}^2)(\|\mathbf{u}_1-\mathbf{u}_2\|_{s-1,2}^2+\|Q_1-Q_2\|_{s,2}^2).
\end{align*}
We rewrite the highest-order term in \eqref{unq} as
\begin{align*}
  &\quad-\int_{\mathbb{T}}\Trucab(\partial_x^{\alpha}(\Theta_1-\Theta_2)Q_1-Q_1\partial_x^{\alpha}(\Theta_1-\Theta_2))
  :(-D(r_2)\partial_x^{\alpha}\triangle(Q_1-Q_2))dx\\
  &=\Trucab\int_{\mathbb{T}}D(r_2)(\partial_x^{\alpha}(\Theta_1-\Theta_2)Q_1-Q_1\partial_x^{\alpha}(\Theta_1-\Theta_2))
  :\triangle\partial_x^{\alpha}(Q_1-Q_2)dx,
\end{align*}
which can be cancelled with the forth term on the right hand side of \eqref{u2}.
Again, by Lemma \ref{lem2.1}, (\ref{5.2}) and the H\"{o}lder inequality
\begin{align*}
 &\quad\left(\Trucaa-\Trucab\right)\int_{\mathbb{T}} \partial_x^{\alpha}(\mathbf{u}_1\cdot\nabla_x Q_1-\Theta_1 Q_1+Q_1\Theta_1-\mathcal{K}(Q_1)):D(r_2)\partial_x^{\alpha}\triangle(Q_1-Q_2) dx\\
 &\leq \frac{\Gamma L}{8}\int_{\mathbb{T}}D(r_2)|\triangle \partial_x^\alpha(Q_1-Q_2)|^2dx+C\|\mathbf{u}_1\|_{s,2}^2\|Q_1\|_{s,2}^2(\|\mathbf{u}_1-\mathbf{u}_2\|_{s-1,2}^2+\|Q_1-Q_2\|_{s,2}^2).
\end{align*}
 After all the estimates we could have
\begin{align}\label{u3}
  &\frac{1}{2}d\|\sqrt{D(r_2)}\partial_x^{\alpha+1}(Q_1-Q_2)\|^2+\Gamma L\int_{\mathbb{T}}D(r_2)|\triangle \partial_x^\alpha(Q_1-Q_2)|^2dxdt\nonumber\\
  \leq &~ C\sum_{j=1}^{2}(1+\|\mathbf{u}_j\|_{s,2}^2+\|Q_j\|_{s+1,2}^2)(1+\|\mathbf{u}_j\|_{s+1,2}^2+\|Q_j\|_{s+2,2}^2)\nonumber\\
  &\times(\|\mathbf{u}_1-\mathbf{u}_2\|_{s-1,2}^2+\|Q_1-Q_2\|_{s,2}^2)dt\nonumber\\
  &-\Trucab\int_{\mathbb{T}}D(r_2)(\partial_x^{\alpha}(\Theta_1-\Theta_2)Q_1-Q_1\partial_x^{\alpha}(\Theta_1-\Theta_2))
  :\triangle\partial_x^{\alpha}(Q_1-Q_2)dxdt\nonumber\\
  &+\frac{\Gamma L}{4}\int_{\mathbb{T}}D(r_2)|\triangle \partial_x^\alpha(Q_1-Q_2)|^2dxdt+\frac{\upsilon}{4}\Trucab \int_{\mathbb{T}}D(r_2)|\partial_x^s(\mathbf{u}_1-\mathbf{u}_2)|^2dxdt.
\end{align}

Adding \eqref{u1}, \eqref{u2} and \eqref{u3}, taking sum for $|\alpha|\leq s-1$, also using the fact that $\frac{1}{C(R)}\leq D(r_2)\leq C(R)$, then the following holds
\begin{align*}
  &\quad d(\|r_1-r_2,\mathbf{u}_1-\mathbf{u}_2\|_{s-1,2}^2+\|Q_1-Q_2\|_{s,2}^2)\\
  &\leq  C(R)\sum_{j=1}^{2}(1+\|r_j, \mathbf{u}_j\|_{s,2}^2+\|Q_j\|_{s+1,2}^3)(1+\|\mathbf{u}_j\|_{s+1,2}^2+\|Q_j\|_{s+2,2}^2)\\
  &\quad\times (\|r_1-r_2, \mathbf{u}_1-\mathbf{u}_2\|_{s-1,2}^2+\|Q_1-Q_2\|_{s,2}^2)dt \\
  &\quad+  C\sum_{|\alpha|\leq s-1}\left(\Trucaa\partial_x^{\alpha}\mathbb{F}(r_1,\mathbf{u}_1)-\Trucab \partial_x^{\alpha}\mathbb{F}(r_2,\mathbf{u}_2), \partial_x^{\alpha}(\mathbf{u}_1-\mathbf{u}_2)\right)dW.
\end{align*}
Denote
$$G(t)=C(R)\sum_{j=1}^{2}(1+\|r_j, \mathbf{u}_j\|_{s,2}^2+\|Q_j\|_{s+1,2}^3)(1+\|\mathbf{u}_j\|_{s+1,2}^2+\|Q_j\|_{s+2,2}^2).$$
Then we could apply the It\^o product formula to function
\begin{eqnarray*}
\exp\left(-\int_{0}^{t}G(\tau)d\tau\right)(\|r_1-r_2,\mathbf{u}_1-\mathbf{u}_2\|_{s-1,2}^2+\|Q_1-Q_2\|_{s,2}^2),
\end{eqnarray*}
obtaining
\begin{align*}
  &d\left[\exp\left(-\int_{0}^{t}G(\tau)d\tau\right)(\|r_1-r_2,\mathbf{u}_1-\mathbf{u}_2\|_{s-1,2}^2+\|Q_1-Q_2\|_{s,2}^2)\right]\\
  =&\left[-G(t)\exp\left(-\int_{0}^{t}G(\tau)d\tau\right)(\|r_1-r_2, \mathbf{u}_1-\mathbf{u}_2\|_{s-1,2}^2+\|Q_1-Q_2\|_{s,2}^2)\right]dt\\
  &+\exp\left(-\int_{0}^{t}G(\tau)d\tau\right)d(\|r_1-r_2, \mathbf{u}_1-\mathbf{u}_2\|_{s-1,2}^2+\|Q_1-Q_2\|_{s,2}^2)\\
  \leq  &~C(R) \sum_{|\alpha|\leq s-1}\left(\Trucaa\partial_x^{\alpha}\mathbb{F}(r_1,\mathbf{u}_1)-\Trucab \partial_x^{\alpha}\mathbb{F}(r_2,\mathbf{u}_2), \partial_x^{\alpha}(\mathbf{u}_1-\mathbf{u}_2)\right)dW\nonumber \\
  &\times \exp\left(-\int_{0}^{t}G(\tau)d\tau\right).
\end{align*}
Integrating on $[0,t]$ and then expectation, we have by the Gronwall lemma
$$\mathbb{E}\left[\exp\left(-\int_{0}^{t}G(\tau)d\tau\right)(\|r_1-r_2,\mathbf{u}_1-\mathbf{u}_2\|_{s-1,2}^2+\|Q_1-Q_2\|_{s,2}^2)\right]=0.$$
Here, we use the fact that the stochastic integral term is a square integral martingale which its expectation vanishes. As
\begin{eqnarray*}
\exp\left(-\int_{0}^{t}G(\tau)d\tau\right)>0,\quad  \mathbb{P}~\mbox{a.s.}
\end{eqnarray*}
since
\begin{align*}
&\quad\int_{0}^{t}G(\tau)d\tau \\ &\leq  \sum_{j=1}^{2}\sup_{t\in [0,T]}(1+\|r_j, \mathbf{u}_j\|_{s,2}^2+\|Q_j\|_{s+1,2}^3)\int_{0}^{T}1+\|\mathbf{u}_j\|_{s+1,2}^2+\|Q_j\|_{s+2,2}^2dt\\
&\leq C\sum_{j=1}^{2}\left[\sup_{t\in [0,T]}(1+\|r_j, \mathbf{u}_j\|_{s,2}^2+\|Q_j\|_{s+1,2}^3)^2+\left(\int_{0}^{T}1+\|\mathbf{u}_j\|_{s+1,2}^2+\|Q_j\|_{s+2,2}^2dt\right)^2\right]\\
&<\infty, \quad \mathbb{P}~ \mbox{a.s.}.
\end{align*}
We conclude that for any $t\in [0,T]$
$$\mathbb{E}\left(\|r_1-r_2,\mathbf{u}_1-\mathbf{u}_2\|_{s-1,2}^2+\|Q_1-Q_2\|_{s,2}^2\right)=0,$$
then the pathwise uniqueness holds.
\end{proof}

From the uniqueness, we shall use the following Gy\"{o}ngy-Krylov characterization which can be found in \cite{Krylov} to recover the convergence a.s. of the approximate solution on the original probability space $(\Omega, \mathcal{F}, \mathbb{P})$.

\begin{lemma}\label{lem5.2}
Let $X$ be a complete separable metric space and suppose that $\{Y_{n}\}_{n\geq0}$ is a sequence of $X$-valued random variables on a probability space $(\Omega,\mathcal{F},\mathbb{P})$. Let $\{\mu_{m,n}\}_{m,n\geq1}$ be the set of joint laws of $\{Y_{n}\}_{n\geq1}$, that is
\begin{equation*}
\mu_{m,n}(E):=\mathbb{P}\{(Y_{n},Y_{m})\in E\},~~~E\in\mathcal{B}(X\times X).
\end{equation*}
Then $\{Y_{n}\}_{n\geq1}$ converges in probability if and only if for every subsequence of the joint probability laws $\{\mu_{m_{k},n_{k}}\}_{k\geq1}$, there exists a further subsequence that converges weakly to a probability measure $\mu$ such that
\begin{equation*}
\mu\{(u,v)\in X\times X: u=v\}=1.
\end{equation*}
\end{lemma}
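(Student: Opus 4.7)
The equivalence splits into two implications; the forward direction is routine and the substance lies in the reverse direction.

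For the forward direction, suppose $Y_n \to Y$ in probability for some $X$-valued random variable $Y$. Then by the triangle inequality in $(X,d)$ the pair $(Y_n, Y_m)$ converges to $(Y, Y)$ in probability on $X \times X$, so the joint laws $\mu_{m,n}$ converge weakly to the law of $(Y, Y)$. This limiting measure is manifestly supported on the diagonal $D := \{(u,v) \in X \times X : u = v\}$, and every subsequence of $\{\mu_{m,n}\}$ inherits the same weak limit, so the stated subsequential property holds trivially.

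For the reverse direction I would argue by contrapositive. Assume $\{Y_n\}$ fails to converge in probability. Since $X$ is a complete separable metric space, convergence in probability is equivalent to the Cauchy property in probability, so there exist $\varepsilon, \delta > 0$ and subsequences $n_k, m_k \to \infty$ such that
\[
\mathbb{P}\{d(Y_{n_k}, Y_{m_k}) \geq \varepsilon\} > \delta \quad \text{for every } k.
\]
Applying the hypothesis to $\{\mu_{m_k, n_k}\}_{k \geq 1}$, I extract a further subsequence (relabelled) whose joint laws converge weakly to some probability measure $\mu$ on $X \times X$ with $\mu(D) = 1$. The set $F := \{(u,v) \in X\times X : d(u,v) \geq \varepsilon\}$ is closed and disjoint from $D$, hence $\mu(F) = 0$. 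Then the Portmanteau theorem applied to this closed set gives
\[
0 = \mu(F) \geq \limsup_{k \to \infty} \mu_{m_k, n_k}(F) \geq \delta > 0,
\]
which is the required contradiction.

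The main conceptual point, even though every step is standard, is the opening move in the reverse direction: turning ``does not converge in probability'' into a uniform lower bound of the form $\mathbb{P}\{d(Y_{n_k}, Y_{m_k}) \geq \varepsilon\} > \delta$. This is exactly where the completeness of $X$ is used, via the fact that on a Polish space a sequence of random variables is Cauchy in probability if and only if it converges in probability; without this one could only produce a single index sequence rather than a pair, and the diagonal-support argument via Portmanteau would not directly apply.
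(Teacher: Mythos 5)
Your proof is correct. The paper itself states this lemma as a quoted result from Gy\"{o}ngy--Krylov \cite{Krylov} and gives no proof, and your argument is exactly the standard one from that reference: the forward direction via joint convergence in probability of $(Y_n,Y_m)$ to $(Y,Y)$, and the reverse direction by negating the Cauchy-in-probability property (this is indeed where completeness of $X$ enters) and applying the Portmanteau inequality to the closed set $\{d(u,v)\geq\varepsilon\}$, which is disjoint from the diagonal.
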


Next, we verify the condition for the above lemma is valid.  Denote by $\mu_{n,m}$ the joint law of
\begin{equation*}
(r_n, \mathbf{u}_{n}, Q_{n};r_m, \mathbf{u}_{m}, Q_{m})~~~~ {\rm on~ the ~path ~space}~\mathcal{X}=\mathcal{X}_{r}\times \mathcal{X}_{\mathbf{u}}\times \mathcal{X}_{Q}\times \mathcal{X}_{r}\times \mathcal{X}_{\mathbf{u}}\times \mathcal{X}_{Q},
\end{equation*}
where $\{r_n, \mathbf{u}_{n}, Q_{n};r_m, \mathbf{u}_{m}, Q_{m}\}_{n,m\geq 1}$ are two sequences of approximate solutions to system \eqref{qnt} relative to the given stochastic basis $\mathcal{S}$, and denote by $\mu_{W}$ the law of $W$ on $\mathcal{X}_{W}$. We introduce the extended phase space
\begin{equation*}
\mathcal{X}^J=\mathcal{X}\times\mathcal{X}_{W},
\end{equation*}
and denote by $\nu_{n,m}$ the joint law of $(r_n, \mathbf{u}_{n}, Q_{n};r_m, \mathbf{u}_{m}, Q_{m}; W)~~~~{\rm on}~~\mathcal{X}^J$. Using a similar argument as the proof of the tightness in subsection 4.3, we obtain the following result.
\begin{proposition}\label{pro5.5}
  The collection of joint laws $\{\nu_{m,n}\}_{n,m\geq 1}$ is tight on $\mathcal{X}^J$.
\end{proposition}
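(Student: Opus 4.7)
The plan is to reduce the tightness of the joint laws $\{\nu_{m,n}\}_{n,m\geq1}$ on the product space $\mathcal{X}^{J}=\mathcal{X}_{r}\times \mathcal{X}_{\mathbf{u}}\times \mathcal{X}_{Q}\times \mathcal{X}_{r}\times \mathcal{X}_{\mathbf{u}}\times \mathcal{X}_{Q}\times \mathcal{X}_{W}$ to the marginal tightness that has already been established. Since each component of $\mathcal{X}^{J}$ is a separable metric space and finite products of tight families remain tight, it suffices to verify that each marginal family is tight on its corresponding path space.

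First, I would invoke the uniform energy estimates \eqref{4.20}, \eqref{4.21} and the time-regularity estimate \eqref{4.45}, which hold uniformly in $n$, for both sequences $(r_n,\mathbf{u}_n,Q_n)$ and $(r_m,\mathbf{u}_m,Q_m)$. Repeating verbatim the compactness argument of Lemma \ref{lem3.2} for $\mathbf{u}$, Lemma \ref{lem4.5} for $Q$, and the analogous transport-equation argument for $r$, I obtain that $\{\mu^{n}_r\}_{n\geq 1}$, $\{\mu^{n}_{\mathbf{u}}\}_{n\geq 1}$ and $\{\mu^{n}_Q\}_{n\geq 1}$ are tight on $\mathcal{X}_r$, $\mathcal{X}_{\mathbf{u}}$ and $\mathcal{X}_Q$ respectively; the same statements hold with $n$ replaced by $m$. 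The law of $W$ on $\mathcal{X}_W=C([0,T];\mathfrak{U}_0)$ is a single Radon measure on a Polish space, hence trivially tight.

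Given $\varepsilon>0$, pick compact sets $K_r^{\varepsilon}\subset \mathcal{X}_r$, $K_{\mathbf{u}}^{\varepsilon}\subset \mathcal{X}_{\mathbf{u}}$, $K_Q^{\varepsilon}\subset\mathcal{X}_Q$ and $K_W^{\varepsilon}\subset \mathcal{X}_W$ such that
\begin{align*}
\mu_{r}^{n}\bigl((K_r^{\varepsilon})^{c}\bigr)+\mu_{\mathbf{u}}^{n}\bigl((K_{\mathbf{u}}^{\varepsilon})^{c}\bigr)+\mu_{Q}^{n}\bigl((K_Q^{\varepsilon})^{c}\bigr)\leq \tfrac{\varepsilon}{7}, \qquad \mu_{W}\bigl((K_W^{\varepsilon})^{c}\bigr)\leq \tfrac{\varepsilon}{7},
\end{align*}
for all $n\geq 1$, with the same estimate valid uniformly in $m$. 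Then the set
\begin{align*}
\mathcal{K}^{\varepsilon}:=K_r^{\varepsilon}\times K_{\mathbf{u}}^{\varepsilon}\times K_Q^{\varepsilon}\times K_r^{\varepsilon}\times K_{\mathbf{u}}^{\varepsilon}\times K_Q^{\varepsilon}\times K_W^{\varepsilon}
\end{align*}
is compact in $\mathcal{X}^{J}$ as a finite product of compacta, and by a union bound
\begin{align*}
\nu_{m,n}\bigl((\mathcal{K}^{\varepsilon})^{c}\bigr)\leq \mu_{r}^{n}\bigl((K_r^{\varepsilon})^{c}\bigr)+\mu_{\mathbf{u}}^{n}\bigl((K_{\mathbf{u}}^{\varepsilon})^{c}\bigr)+\mu_{Q}^{n}\bigl((K_Q^{\varepsilon})^{c}\bigr)+\mu_{r}^{m}\bigl((K_r^{\varepsilon})^{c}\bigr)+\mu_{\mathbf{u}}^{m}\bigl((K_{\mathbf{u}}^{\varepsilon})^{c}\bigr)+\mu_{Q}^{m}\bigl((K_Q^{\varepsilon})^{c}\bigr)+\mu_{W}\bigl((K_W^{\varepsilon})^{c}\bigr)\leq \varepsilon,
\end{align*}
uniformly in $n,m\geq 1$, which proves the tightness of $\{\nu_{m,n}\}_{n,m\geq 1}$ on $\mathcal{X}^{J}$.

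There is no genuine obstacle here beyond bookkeeping: all the hard work (uniform high-order a priori estimates with the cut-off $\Truca$, the fractional time-regularity of the stochastic integral, and the Aubin--Lions type compact embeddings involving $C^{\alpha}([0,T];L^{2})$) has already been carried out in Section 4.3, and it applies identically to the second copy $(r_m,\mathbf{u}_m,Q_m)$ because the two sequences solve the same Galerkin system and thus obey the same bounds. The only minor subtlety is to ensure that the compactness argument for the $r$ marginal, not spelled out in detail earlier, does go through on $\mathcal{X}_r=C([0,T];W^{s-1,2}(\mathbb{T}))$; this follows by decomposing the transport equation \eqref{qnt}(1) and estimating $\partial_t r_n$ in a negative Sobolev norm via the bounds on $\Truca(\mathbf{u}_n\cdot\nabla_x r_n+\tfrac{\gamma-1}{2}r_n\Dv_x\mathbf{u}_n)$ afforded by \eqref{4.20}, then invoking Aubin--Lions in the compact embedding $W^{s,2}\hookrightarrow\hookrightarrow W^{s-1,2}$.
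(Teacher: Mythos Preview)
Your proposal is correct and follows exactly the route the paper indicates: the paper's proof is the single sentence ``Using a similar argument as the proof of the tightness in subsection 4.3, we obtain the following result,'' and your write-up is precisely that argument carried out in full, reducing joint tightness to marginal tightness via a product-of-compacts and a union bound. Your extra remark on the $r$-marginal (time regularity from the transport equation plus Aubin--Lions with $W^{s,2}\hookrightarrow\hookrightarrow W^{s-1,2}$) is a welcome clarification of a step the paper also leaves implicit.
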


For any subsequence $\{\nu_{n_{k},m_{k}}\}_{k\geq 1}$, there exists a measure $\nu$ such that $\{\nu_{n_{k},m_{k}}\}_{k\geq 1}$ converges to $\nu$. Applying the Skorokhod representation theorem \ref{thm4.1}, we have a new probability space $(\tilde{\Omega},\tilde{\mathcal{F}},\tilde{\mathbb{P}})$ and $\mathcal{X}^J$-valued random variables
\begin{eqnarray*}
(\tilde{r}_{n_k}, \tilde{\mathbf{u}}_{n_k}, \tilde{Q}_{n_k};\tilde{r}_{m_k}, \tilde{\mathbf{u}}_{m_k}, \tilde{Q}_{m_k}; \tilde{W}_{k})~ {\rm and}~(\tilde{r}_1, \tilde{\mathbf{u}}_{1}, \tilde{Q}_{1};\tilde{r}_2, \tilde{\mathbf{u}}_{2}, \tilde{Q}_{2}; \tilde{W})
\end{eqnarray*}
such that
\begin{eqnarray*}
 &&\tilde{\mathbb{P}}\{(\tilde{r}_{n_k}, \tilde{\mathbf{u}}_{n_k}, \tilde{Q}_{n_k};\tilde{r}_{m_k}, \tilde{\mathbf{u}}_{m_k}, \tilde{Q}_{m_k}; \tilde{W}_{k})\in \cdot\}=\nu_{n_{k},m_{k}}(\cdot),\\
 &&\tilde{\mathbb{P}}\{(\tilde{r}_1, \tilde{\mathbf{u}}_{1}, \tilde{Q}_{1};\tilde{r}_2, \tilde{\mathbf{u}}_{2}, \tilde{Q}_{2}; \tilde{W})\in \cdot\}=\nu(\cdot)
\end{eqnarray*}
 and
\begin{eqnarray*}
(\tilde{r}_{n_k}, \tilde{\mathbf{u}}_{n_k}, \tilde{Q}_{n_k};\tilde{r}_{m_k}, \tilde{\mathbf{u}}_{m_k}, \tilde{Q}_{m_k}; \tilde{W}_{k})\rightarrow (\tilde{r}_1, \tilde{\mathbf{u}}_{1}, \tilde{Q}_{1};\tilde{r}_2, \tilde{\mathbf{u}}_{2}, \tilde{Q}_{2}; \tilde{W}),~~\tilde{\mathbb{P}}~\mbox{a.s.}
\end{eqnarray*}
in the topology of $\mathcal{X}^J$. Analogously, this argument can be applied to both
\begin{equation*}
(\tilde{r}_{n_k}, \tilde{\mathbf{u}}_{n_k}, \tilde{Q}_{n_k},\tilde{W}_{k}),
~~(\tilde{r}_1, \tilde{\mathbf{u}}_{1}, \tilde{Q}_{1},\tilde{W}) \hspace{.3cm} \text{and} \hspace{.3cm}
(\tilde{r}_{m_k}, \tilde{\mathbf{u}}_{m_k}, \tilde{Q}_{m_k},\tilde{W}_{k}),~~(\tilde{r}_2,\tilde{ \mathbf{u}}_{2}, \tilde{Q}_{2},\tilde{W})
\end{equation*}
to show that $(\tilde{r}_1, \tilde{\mathbf{u}}_{1}, \tilde{Q}_{1},\tilde{W})$ and $(\tilde{r}_2, \tilde{\mathbf{u}}_{2}, \tilde{Q}_{2},\tilde{W})$ are two martingale solutions relative to the same stochastic basis $\widetilde{\mathcal{S}}:=(\tilde{\Omega},\tilde{\mathcal{F}},\tilde{\mathbb{P}},\{\tilde{\mathcal{F}}_{t}\}_{t\geq 0},\tilde{W})$.

In addition, we have $\mu_{n,m}\rightharpoonup \mu$ where $\mu$ is defined by
$$\mu(\cdot)=\tilde{\mathbb{P}}\{(\tilde{r}_1, \tilde{\mathbf{u}}_{1}, \tilde{Q}_{1};\tilde{r}_2, \tilde{\mathbf{u}}_{2}, \tilde{Q}_{2})\in \cdot\}.$$
Proposition \ref{pro3.3} implies that $\mu\{(r_1, \mathbf{u}_{1}, Q_{1};r_2, \mathbf{u}_{2}, Q_{2})\in \mathcal{X}:(r_1, \mathbf{u}_{1}, Q_{1})=(r_2, \mathbf{u}_{2}, Q_{2})\}=1$. Also since $W^{s,2}\subset W^{s-1,2}$, uniqueness in $W^{s-1,2}$ implies uniqueness in $W^{s,2}$. Therefore, Lemma \ref{lem5.2} can be used to deduce that the sequence $(r_n, \mathbf{u}_{n}, Q_{n})$ defined on the original probability space $(\Omega,\mathcal{F},\mathbb{P})$ converges a.s. in the topology of $\mathcal{X}_r\times \mathcal{X}_{\mathbf{u}}\times \mathcal{X}_{Q}$ to random variable $(r, \mathbf{u}, Q)$.

 Again by the same argument as in subsection 4.4, we get the Theorem \ref{th4.2} in the sense of Definition \ref{def2}.

\section{\bf Proof of Theorem \ref{thm2.5}.}
In the process of proving the Theorem \ref{th4.2}, it's worth noting that due to technical reason, we assume that the initial data is integrable with respect to the random element $\omega$, and that the density is uniformly bounded from below. Next, based on the Theorem \ref{th4.2}, we are able to remove these restrictions on the initial data and discuss the general case, thus the proof of the main Theorem \ref{thm2.5} will be completed.

We start with the proof of the existence of the strong pathwise solution, which is divided into three steps. For the first step, we show the existence of the strong pathwise solution under the assumption that the initial data satisfies
\begin{align}\label{6.1}
\rho_0>\underline{\rho}> 0,~ \|\rho_0\|_{s,2}\leq M,~ \|\mathbf{u}_0\|_{s,2}\leq M, ~\|Q_0\|_{s+1,2}\leq M, ~Q_0\in S_0^3,
\end{align}
for a fixed constant $M>0$ such that $R>\mathcal{C}M$, where $\mathcal{C}$ is a constant satisfying
$$\|\mathbf{u}\|_{2,\infty}<\mathcal{C}\|\mathbf{u}\|_{s-1,2}, \|Q\|_{3,\infty}<\mathcal{C}\|Q\|_{s,2}.$$
Introduce a stopping time $\tau_R=\tau_R^1\wedge \tau_R^2$, where
\begin{eqnarray*}
\tau^1_R=\inf\left\{t\in[0,T];\sup_{\gamma\in [0,t]}\|\mathbf{u}_R\|_{2,\infty}\geq R\right\},~\tau^2_R=\inf\left\{t\in[0,T];\sup_{\gamma\in [0,t]}\|Q_R\|_{3,\infty}\geq R\right\}.
\end{eqnarray*}
If two sets are empty, choosing $\tau^i_R=T,i=1,2$. The fact that $\mathbf{u}, Q$ having continuous trajectories in $W^{s-1,2}(\mathbb{T}, \mathbb{R}^3)$ and in $W^{s,2}(\mathbb{T},S_0^3)$ for integer $s>\frac{9}{2}$ respectively and the Sobolev embedding $W^{s,2}\hookrightarrow W^{\alpha,\infty}$ for $s>\frac{3}{2}+\alpha$, $\mathbb{P}$~\mbox{a.s.} guarantee the well-defined of $\tau_R$ and strictly positive $\mathbb{P}$ a.s..

 Since $r_R(t, \cdot)\geq \mathcal{C}(R)>0,~ \mathbb{P}~ \mbox{a.s.} ~{\rm for ~ all} ~t\in [0,T]$, we could construct a local strong pathwise solution $(\rho_R,\mathbf{u}_R ,Q_R,\tau_R)$ of system \eqref{qn}, based on the existence of unique pathwise solution $(r_R,\mathbf{u}_R,Q_R)$ of the truncated system \eqref{qnt} with initial data conditions (\ref{6.1}), where $\rho_R=\left(\frac{\gamma-1}{2A\gamma}\right)^\frac{1}{\gamma-1}r_R^\frac{2}{\gamma-1}$.

For the second step, we drop the auxiliary boundedness assumption of the initial data following the ideas of \cite{Glatt-Holtz}. For the solution $(r_R,\mathbf{u}_R,Q_R)$ of the system \eqref{qnt}, define the following stopping time
\begin{align*}
    &\tau_M^1=\inf\left\{t\in[0,T];\sup_{\gamma\in [0,t]}\|\mathbf{u}_R\|_{s,2}\geq M\right\}, \\
    &\tau_M^2=\inf\left\{t\in[0,T];\sup_{\gamma\in [0,t]}\|Q_R\|_{s+1,2}\geq M\right\}, \\
    &\tau_M^3=\inf\left\{t\in[0,T];\sup_{\gamma\in [0,t]}\|r_R\|_{s,2}\geq M\right\}, \\
    &\tau_M^4=\inf\left\{t\in[0,T];\inf_{x\in \mathbb{T}}r_R(t)\leq \frac{1}{M}\right\},
\end{align*}
where $M$ relies on $R$ such that $M\rightarrow\infty$ as $R\rightarrow \infty$ and $M\leq {\rm min}\left(\frac{R}{\mathcal{C}}, R\right)$. Then we could define $\tau_M=\tau_M^1\wedge\tau_M^2\wedge\tau_M^3\wedge\tau_M^4$, such that in $[0,\tau_M]$, again using the Sobolev embedding $W^{s,2}\hookrightarrow W^{\alpha,\infty}$ for $s>\frac{3}{2}+\alpha$, $\mathbb{P}$~\mbox{a.s.}, we have
\begin{align*}
&\sup_{t\in [0,\tau_M]}\|r_R(t)\|_{1,\infty}<R,~\sup_{t\in [0,\tau_M]}\|\mathbf{u}_R(t)\|_{2,\infty}<R,\\ &\sup_{t\in [0,\tau_M]}\|Q_R(t)\|_{3,\infty}<R,
~\inf_{t\in [0,\tau_M]}\inf_{\mathbb{T}}r_R(t)>\frac{1}{R}.
\end{align*}
 According to the Theorem \ref{th4.2}, we could construct the solution with respect to the stopping time $\tau_M$  for the general data. Indeed, define
\begin{align*}
  &\Sigma_{M}=\bigg\{(r,\mathbf{u},Q)\in W^{s,2}(\mathbb{T})\times W^{s,2}(\mathbb{T},\mathbb{R}^3)\times W^{s+1,2}(\mathbb{T},S_0^3):\\
&\qquad\qquad\|r(t)\|_{s,2}<M,\|\mathbf{u}(t)\|_{s,2}<M,\|Q(t)\|_{s+1,2}<M,r(t)>\frac{1}{M}\bigg\},
\end{align*}
then, we have there exists a unique solution $(r_M, \mathbf{u}_M, Q_M)$ to system \eqref{qnt1} with the initial data $(r_0, \mathbf{u}_0, Q_0)\mathbf{1}_{(r_0,\mathbf{u}_0,Q_0)\in \Sigma_{M}\backslash \cup_{j=1}^{M-1}\Sigma_{j}}$, which is also a solution to the original system \eqref{qn} with the stopping time $\tau_M$.

Define
$$\tau=\sum_{M=1}^{\infty}\tau_{M}\mathbf{1}_{(r_0,\mathbf{u}_0,Q_0)\in \Sigma_{M}\backslash \cup_{j=1}^{M-1}\Sigma_{j}},$$
$$(r,\mathbf{u},Q)=\sum_{M=1}^{\infty}(r_M,\mathbf{u}_M,Q_M)\mathbf{1}_{(r_0,\mathbf{u}_0,Q_0)\in \Sigma_{M}\backslash \cup_{j=1}^{M-1}\Sigma_{j}}.$$
Using the same argument as \cite[Proposition 4.2]{Glatt-Holtz1}, we infer that the $(r,\mathbf{u},Q, \tau)$ is a solution to system \eqref{qnt1} with the initial condition $(r_0,\mathbf{u}_0,Q_0)$ being $\mathcal{F}_0$-measurable random variable, with values in $W^{s,2}(\mathbb{T})\times W^{s,2}(\mathbb{T},\mathbb{R}^3)\times W^{s+1,2}(\mathbb{T},S_0^3)$ and $r_0>0$, $\mathbb{P}$ ~a.s..

Next, we show $(r,\mathbf{u},Q)$ has continuous trajectory in $W^{s,2}(\mathbb{T})\times W^{s-1,2}(\mathbb{T},\mathbb{R}^3)\times W^{s,2}(\mathbb{T},S_0^3)$, $\mathbb{P}$ ~a.s.. Define
\begin{eqnarray*}
\Omega_M=\left\{\omega\in \Omega:\|r_0(\omega)\|_{s,2}<M,\|\mathbf{u}_0(\omega)\|_{s,2}<M,\|Q_0(\omega)\|_{s+1,2}<M,r_0(\omega)>\frac{1}{M}\right\}.
\end{eqnarray*}
Observe that $\bigcup_{M=1}^\infty\Omega_M=\Omega$. Therefore, for any $\omega\in\Omega$, there exists a set $\Omega_M$ such that $\omega\in\Omega_M$, and by the construction, we have $(r,\mathbf{u},Q)(\omega)=(r_M,\mathbf{u}_M,Q_M)(\omega)$. Since $(r_M,\mathbf{u}_M,Q_M)$ has continuous trajectories in $W^{s,2}(\mathbb{T})\times W^{s-1,2}(\mathbb{T},\mathbb{R}^3)\times W^{s,2}(\mathbb{T}, S_0^3)$ and $r_M(t\wedge \tau_M,\cdot)>\mathcal{C}(M)$, $\mathbb{P}$ a.s. for all $t\in [0,T]$, then we deduce that $(r,\mathbf{u},Q)$ has continuous trajectories in $W^{s,2}(\mathbb{T})\times W^{s-1,2}(\mathbb{T},\mathbb{R}^3)\times W^{s,2}(\mathbb{T},S_0^3)$, $\mathbb{P}$ a.s. and $r(t\wedge \tau,\cdot)>0$, $\mathbb{P}$ a.s. for all $t\in [0,T]$. In addition, for the fixed $\omega$, we have $\Phi_{R}^{\mathbf{u}, Q}=1$ on $[0, \tau_M(\omega)]$, thus $\mathbf{u}_M \mathbf{1}_{t\leq\tau_M}\in L^2(0,T; W^{s+1,2}(\mathbb{T},\mathbb{R}^3))$, then by the construction, we deduce that $\mathbf{u}\mathbf{1}_{t\leq\tau}\in L^2(0,T; W^{s+1,2}(\mathbb{T},\mathbb{R}^3))$, $\mathbb{P}$ a.s..

Finally, since $r(t\wedge \tau,\cdot)>0$, $\mathbb{P}$ a.s. for all $t\in [0,T]$, after a transformation, we summarize that if $(\rho_0,\mathbf{u}_0,Q_0)$ just lies in $W^{s,2}(\mathbb{T})\times W^{s,2}(\mathbb{T},\mathbb{R}^3)\times W^{s+1,2}(\mathbb{T},S_0^3)$ and $\rho_0>0$, $\mathbb{P}$ a.s. this means dropping the integrability with respect to $\omega$ and the positive lower bound of $\rho_0$, we establish the existence of a local strong pathwise solution $(\rho,\mathbf{u},Q)$ to system \eqref{qn} in the sense of Definition \ref{de1}, up to a stopping time $\tau$ which is strictly positive, $\mathbb{P}$ a.s..

The final step would be constructing the maximal strong solutions. That is, extending the strong solution $(\rho,\mathbf{u},Q)$ to a maximal existence time $\mathfrak{t}$. The proof is standard, so we refer the reader to \cite{Seidler,Glatt-Holtz,Rozovskii} for details.

Regarding the proof of uniqueness to Theorem \ref{thm2.5}, first, under the assumption (\ref{6.1}), we could prove the uniqueness result by introducing a stopping time and applying the pathwise uniqueness result derived before. Then, we can remove the extra assumption on the initial data by a same cutting argument as above. This completes the proof of Theorem \ref{thm2.5}.

\section{Appendix}

In the appendix, we present some classical results that could be used in this paper.
\begin{lemma}(The Aubin-Lions Lemma, \!{\rm \cite[Chapter I]{LP1}}\label{lem6.1}) Suppose that $X_{1}\subset X_{0}\subset X_{2}$ are Banach spaces, $X_{1}$ and $X_{2}$ are reflexive,  and the embedding of $X_{1}$ into $X_{0}$ is compact.
Then for any $1<p<\infty,~ 0<\alpha<1$, the embedding
\begin{align*}
L^{p}(0,T;X_{1})\cap W^{\alpha,p}(0,T;X_{2})\hookrightarrow L^{p}(0,T;X_{0}),\\
L^{\infty}(0,T;X_{1})\cap C^{\alpha}([0,T];X_{2})\hookrightarrow L^\infty(0,T;X_{0})
\end{align*}
is compact.
\end{lemma}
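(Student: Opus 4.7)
The strategy is the classical Aubin--Lions argument: combine an Ehrling-type interpolation inequality (exploiting the compact embedding $X_{1}\hookrightarrow X_{0}$) with an equicontinuity-in-time estimate extracted from the fractional Sobolev or H\"older regularity. I would first establish the following auxiliary inequality: for every $\epsilon > 0$ there exists $C_\epsilon > 0$ such that
\begin{equation*}
\|v\|_{X_{0}} \leq \epsilon \|v\|_{X_{1}} + C_\epsilon \|v\|_{X_{2}}, \qquad \forall\, v \in X_{1}.
\end{equation*}
The usual contradiction argument works: if it failed, one would obtain a sequence $v_n \in X_{1}$ with $\|v_n\|_{X_{0}}=1$, $\|v_n\|_{X_{1}}$ bounded and $\|v_n\|_{X_{2}} \to 0$. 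By reflexivity of $X_{1}$ and the compact inclusion $X_{1}\hookrightarrow X_{0}$, a subsequence converges in $X_{0}$ to some $v\ne 0$; but the continuous embedding $X_{0}\hookrightarrow X_{2}$ forces $v=0$, a contradiction.

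For the first embedding, let $\{u_n\}$ be bounded in $L^{p}(0,T;X_{1})\cap W^{\alpha,p}(0,T;X_{2})$. Integrating Ehrling's inequality in $t$ yields
\begin{equation*}
\|u_n - u_m\|_{L^{p}(0,T;X_{0})}^{p} \leq C\epsilon^{p}\|u_n-u_m\|_{L^{p}(0,T;X_{1})}^{p} + C\,C_\epsilon^{p}\|u_n-u_m\|_{L^{p}(0,T;X_{2})}^{p},
\end{equation*}
so it suffices to prove precompactness of $\{u_n\}$ in $L^{p}(0,T;X_{2})$. For this I would invoke the Fr\'echet--Kolmogorov criterion: the sequence is bounded in $L^{p}(0,T;X_{2})$ by the continuous inclusion $X_{1}\hookrightarrow X_{2}$, and the $W^{\alpha,p}$ bound yields the translation estimate
\begin{equation*}
\int_{0}^{T-h}\|u_n(t+h)-u_n(t)\|_{X_{2}}^{p}\,dt \leq C(\alpha,p)\,h^{\alpha p}\,[u_n]_{W^{\alpha,p}(0,T;X_{2})}^{p} \to 0
\end{equation*}
uniformly in $n$ as $h\to 0$, via the standard embedding of Sobolev--Slobodeckij spaces into the Nikolski--Besov space $B^{\alpha}_{p,\infty}$. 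Together with the spatial tightness inherited from the compact inclusion $X_{1}\hookrightarrow X_{0}\hookrightarrow X_{2}$, this produces a Cauchy subsequence in $L^{p}(0,T;X_{2})$; choosing $\epsilon$ small first and then $n,m$ large in the displayed inequality shows $\{u_n\}$ is Cauchy in $L^{p}(0,T;X_{0})$.

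For the second embedding, I would replace Fr\'echet--Kolmogorov by a vector-valued Arzel\`a--Ascoli argument in $C([0,T];X_{0})$. Pointwise precompactness of $\{u_n(t)\}$ in $X_{0}$ follows from boundedness in $X_{1}$ and $X_{1}\hookrightarrow\hookrightarrow X_{0}$, while Ehrling combined with the $C^{\alpha}([0,T];X_{2})$ bound gives equicontinuity in $X_{0}$:
\begin{equation*}
\|u_n(t)-u_n(s)\|_{X_{0}} \leq 2\epsilon\,\|u_n\|_{L^{\infty}(0,T;X_{1})} + C_\epsilon\,[u_n]_{C^{\alpha}([0,T];X_{2})}\,|t-s|^{\alpha}.
\end{equation*}
Choosing $\epsilon$ small first and then $|t-s|$ small makes the right-hand side uniformly small, so Arzel\`a--Ascoli yields a subsequence convergent in $C([0,T];X_{0})$, hence in $L^{\infty}(0,T;X_{0})$. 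The main technical obstacle is the quantitative translation estimate in $X_{2}$ from the Sobolev--Slobodeckij seminorm; handling it cleanly essentially reduces to the Besov embedding cited above, after which both parts follow from the same Ehrling interpolation.
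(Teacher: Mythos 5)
The paper does not prove this lemma at all: it is stated in the Appendix as a quoted classical result with a citation to Lions (and it is in fact the fractional Aubin--Lions--Simon variant, whose standard reference is Simon's ``Compact sets in $L^{p}(0,T;B)$''). Your argument is the standard proof of exactly that result --- Ehrling's interpolation inequality obtained by contradiction from the compact embedding $X_{1}\hookrightarrow\hookrightarrow X_{0}$, reduction of the $L^{p}$ case to precompactness in $L^{p}(0,T;X_{2})$ via the Fr\'echet--Kolmogorov criterion together with the translation estimate coming from $W^{\alpha,p}\hookrightarrow B^{\alpha}_{p,\infty}$, and a vector-valued Arzel\`a--Ascoli argument for the $L^{\infty}/C^{\alpha}$ case --- and it is correct. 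The only points worth tightening are (i) in the Fr\'echet--Kolmogorov step one should state explicitly that the averages $\int_{t_1}^{t_2}u_n\,dt$ lie in a bounded, hence relatively compact, subset of $X_{0}\subset X_{2}$ (this is the ``spatial tightness'' you allude to), and (ii) in the second embedding, functions in $L^{\infty}(0,T;X_{1})$ are defined only a.e., so one must pass to the continuous $X_{2}$-valued representative and use reflexivity of $X_{1}$ (weak lower semicontinuity of the norm) to conclude that every pointwise value actually lies in the closed ball of $X_{1}$ before invoking compactness of $X_{1}\hookrightarrow X_{0}$ at each $t$. Neither point affects the validity of the approach.
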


\begin{theorem}(The Vitali convergence theorem, \!{\rm \cite[Chapter 3]{kall}}\label{thm6.1}) Let $p\geq 1$, $\{u_n\}_{n\geq 1}\in L^p$ and $u_n\rightarrow u$ in probability. Then, the following are equivalent\\
{\rm (1)}. $u_n\rightarrow u$ in $L^p$;\\
{\rm (2)}. the sequence $|u_n|^p$ is uniformly integrable;\\
{\rm (3)}. $\mathbb{E}|u_n|^p\rightarrow \mathbb{E}|u|^p$.
\end{theorem}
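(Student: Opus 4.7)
The plan is to establish the three equivalences in a circular or triangular manner, exploiting the fact that convergence in probability is already given. Concretely, I will prove $(1)\Rightarrow(3)$, $(1)\Leftrightarrow(2)$, and $(3)\Rightarrow(1)$, which closes the loop. The proofs rest on two elementary ingredients: the pointwise inequality $|a+b|^{p}\leq 2^{p-1}(|a|^{p}+|b|^{p})$ for $p\geq 1$, and the subsequence principle (if every subsequence of $\{u_n\}$ admits a further subsequence converging in $L^p$ to $u$, then the full sequence converges in $L^p$ to $u$), combined with Fatou's lemma.

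For $(1)\Rightarrow(3)$: by Minkowski's inequality, $\bigl|\|u_n\|_p-\|u\|_p\bigr|\leq \|u_n-u\|_p\to 0$, so $\|u_n\|_p\to\|u\|_p$, and raising to the $p$-th power yields $\mathbb{E}|u_n|^p\to\mathbb{E}|u|^p$.

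For $(1)\Rightarrow(2)$: fix $\varepsilon>0$ and choose $N$ with $\|u_n-u\|_p^p<\varepsilon$ for $n\geq N$. The finite family $\{|u|^p,|u_1|^p,\dots,|u_{N-1}|^p\}$ is uniformly integrable (each is in $L^1$). For $n\geq N$, use $|u_n|^p\leq 2^{p-1}(|u|^p+|u_n-u|^p)$ to bound $\int_A|u_n|^p\,d\mathbb{P}\leq 2^{p-1}\bigl(\int_A|u|^p\,d\mathbb{P}+\varepsilon\bigr)$, uniformly in $n\geq N$, which gives uniform integrability of the whole sequence.

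For $(2)\Rightarrow(1)$: convergence in probability together with a.s.\ convergence along a subsequence yields, via Fatou's lemma, $\mathbb{E}|u|^p\leq \liminf_n \mathbb{E}|u_n|^p<\infty$, so $u\in L^p$. Since $|u|^p\in L^1$ and $\{|u_n|^p\}$ is UI, the family $\{|u_n-u|^p\}\leq 2^{p-1}(|u_n|^p+|u|^p)$ is also UI. Convergence in probability of $|u_n-u|^p$ to $0$ combined with uniform integrability gives $\mathbb{E}|u_n-u|^p\to 0$ by the classical Vitali criterion for $L^1$-convergence (which is $(2)\Leftrightarrow(1)$ at the level $p=1$, applied to $|u_n-u|^p$).

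For $(3)\Rightarrow(1)$, which I expect to be the main subtle step: I will use a Brezis--Lieb type trick. Along any subsequence (still labeled $\{u_n\}$), extract a further subsequence such that $u_n\to u$ almost surely. Define
\begin{equation*}
f_n:=2^{p-1}\bigl(|u_n|^p+|u|^p\bigr)-|u_n-u|^p\geq 0,
\end{equation*}
which is nonnegative by the elementary inequality. Since $u_n\to u$ a.s., $f_n\to 2^p|u|^p$ a.s., and Fatou's lemma yields
\begin{equation*}
\mathbb{E}(2^p|u|^p)\leq \liminf_{n\to\infty}\mathbb{E}f_n =2^{p-1}\bigl(\lim_n\mathbb{E}|u_n|^p+\mathbb{E}|u|^p\bigr)-\limsup_{n\to\infty}\mathbb{E}|u_n-u|^p.
\end{equation*}
Using hypothesis $(3)$, $\lim_n\mathbb{E}|u_n|^p=\mathbb{E}|u|^p$, the right-hand side equals $2^p\mathbb{E}|u|^p-\limsup_n\mathbb{E}|u_n-u|^p$, forcing $\limsup_n\mathbb{E}|u_n-u|^p\leq 0$. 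Hence $u_n\to u$ in $L^p$ along this sub-subsequence. By the subsequence principle, the full sequence converges in $L^p$, completing the circle.

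The main obstacle is the implication $(3)\Rightarrow(1)$: neither convergence in probability alone nor convergence of $p$-th moments alone implies $L^p$ convergence, so the Brezis--Lieb-type nonnegative combination $f_n$ is essential — it is what lets Fatou's lemma convert the moment equality into a vanishing $\|u_n-u\|_p$ bound, without passing through uniform integrability directly.
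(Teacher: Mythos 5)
Your proof is correct. Note, however, that the paper offers no proof of this statement at all: it is listed in the Appendix as a classical result quoted from Kallenberg, so there is nothing to compare against except the standard textbook argument — which is essentially what you reproduce (subsequence principle plus Fatou, the elementary bound $|a+b|^p\le 2^{p-1}(|a|^p+|b|^p)$, and the Scheff\'e/Brezis--Lieb-type nonnegative combination $f_n$ for the implication $(3)\Rightarrow(1)$). Two small presentational remarks. First, in the step $(2)\Rightarrow(1)$ you invoke ``the classical Vitali criterion at level $p=1$''; as phrased this sounds circular, so you should state the two-line base case explicitly: if $v_n\ge 0$, $v_n\to 0$ in probability and $\{v_n\}$ is uniformly integrable, then $\mathbb{E}v_n=\mathbb{E}[v_n;\,v_n\le\varepsilon]+\mathbb{E}[v_n;\,v_n>\varepsilon]\le\varepsilon+\mathbb{E}[v_n;\,v_n>\varepsilon]$, and the last term vanishes by uniform integrability since $\mathbb{P}(v_n>\varepsilon)\to 0$. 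Second, in $(3)\Rightarrow(1)$ you should record at the outset that $\mathbb{E}|u|^p<\infty$ (by Fatou along an a.s.\ convergent subsequence, $\mathbb{E}|u|^p\le\liminf_n\mathbb{E}|u_n|^p<\infty$), so that the algebraic manipulation $\mathbb{E}f_n=2^{p-1}(\mathbb{E}|u_n|^p+\mathbb{E}|u|^p)-\mathbb{E}|u_n-u|^p$ involves only finite quantities; with that in place the Fatou argument and the identity $\liminf_n(c_n-d_n)=\lim_n c_n-\limsup_n d_n$ for convergent $c_n$ go through exactly as you wrote them. With these two clarifications the argument is complete.
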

\begin{theorem}(The Skorokhod representation theorem, \!{\rm \cite[Theorem 1]{Sko}}\label{thm4.1}) Let $X$ be a Polish space. If the set of probability measures $\{\nu_n\}_{n\geq 1}$ on $\mathcal{B}(X)$ is tight, then there exists a probability space $(\Omega, \mathcal{F}, \mathbb{P})$ and a sequence of random variables $u_n, u$ such that theirs laws are $\nu_n$, $\nu$ and $u_n\rightarrow u$, $\mathbb{P}$ a.s. as $n\rightarrow \infty$.
\end{theorem}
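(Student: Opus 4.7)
The plan is to combine Prokhorov's theorem with an explicit coupling construction on the unit interval. Since $\{\nu_n\}_{n\geq 1}$ is tight on the Polish space $X$, Prokhorov's theorem produces a subsequence, still labelled $\{\nu_n\}$, and a Borel probability measure $\nu$ on $X$ with $\nu_n \rightharpoonup \nu$ weakly. The goal is to realise this weak convergence through a single probability space $(\Omega,\mathcal{F},\mathbb{P})$ carrying random elements $u_n, u$ with laws $\nu_n, \nu$ respectively such that $u_n \to u$ pointwise almost surely. I will take $(\Omega,\mathcal{F},\mathbb{P}) = ([0,1],\mathcal{B}([0,1]),\lambda)$ with $\lambda$ the Lebesgue measure, exploiting the fact that Lebesgue measure on $[0,1]$ is a rich enough source of randomness to realise any Borel probability law on a Polish space via a measurable map.

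The core construction is a sequence of nested finite Borel partitions of $X$. For each $k \geq 1$ I would build a partition $\{A_{k,j}\}_{j=1}^{N_k}$ of $X$ satisfying (i) $\mathrm{diam}(A_{k,j}) < 2^{-k}$; (ii) each topological boundary $\partial A_{k,j}$ is $\nu$-null, so that $A_{k,j}$ is a $\nu$-continuity set; and (iii) the partition at level $k+1$ refines the one at level $k$. Separability of $X$ provides a countable cover by small balls, and since at most countably many radii $r$ can charge $\partial B(x,r)$ with positive $\nu$-mass, a mild perturbation of radii enforces (ii); refinement (iii) is achieved by intersecting successive partitions and discarding the (null) boundary pieces. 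By the Portmanteau characterisation of weak convergence, $\nu_n(A_{k,j}) \to \nu(A_{k,j})$ as $n \to \infty$ for every $k,j$.

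Next, pick a representative point $x_{k,j} \in A_{k,j}$ and, on $[0,1]$, introduce for each $k$ a decomposition into half-open intervals $I_{k,j}$ with $\lambda(I_{k,j})=\nu(A_{k,j})$, arranged so that the refinement of partitions corresponds to a refinement of the interval system. Define an $X$-valued step function $u^{(k)}(\omega)=x_{k,j}$ on $I_{k,j}$; by (i) and (iii) the sequence $(u^{(k)}(\omega))_k$ is Cauchy in $X$ for every $\omega$, and completeness of $X$ yields a Borel-measurable limit $u(\omega)$ whose law is $\nu$ by construction. The analogous construction with intervals $I^{n}_{k,j}$ of length $\nu_n(A_{k,j})$, using the same representatives $x_{k,j}$ and the same ordering, produces $u_n$ with law $\nu_n$.

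The main obstacle is turning the weak convergence $\nu_n \rightharpoonup \nu$ into almost-sure convergence of the $u_n$ to $u$. The key observation is that, because $\nu_n(A_{k,j}) \to \nu(A_{k,j})$, the endpoints of the intervals $I^{n}_{k,j}$ converge to those of $I_{k,j}$ as $n \to \infty$ for each fixed $k,j$. Consequently, outside the $\lambda$-null set of $\omega$ lying on some partition-interval endpoint (a countable union over $k$), for every $k$ there exists $N_k(\omega)$ such that for all $n \geq N_k(\omega)$ the point $\omega$ lies in the same cell $I_{k,j}$ for both the $u$-system and the $u_n$-system, whence $d(u_n(\omega),u(\omega)) < 2^{1-k}$. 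Letting $k \to \infty$ yields $u_n \to u$ almost surely, completing the proof. Finally, one returns to the original (non-subsequenced) statement by noting that the subsequence $\{\nu_n\}$ extracted from Prokhorov is the one for which the representation is established.
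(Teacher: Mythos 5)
The paper does not prove this statement: it is quoted in the Appendix as a classical result with a citation to Skorokhod's original 1956 article, so there is no in-paper argument to compare against. Your proposal is essentially the standard textbook proof (Skorokhod/Billingsley): nested partitions into small $\nu$-continuity sets, a matching system of subintervals of $[0,1]$ with lengths given by the cell masses, and the Portmanteau theorem to make the interval endpoints for $\nu_n$ converge to those for $\nu$. The overall architecture, the identification of the laws, and the almost-sure convergence argument (a.e.\ $\omega$ avoids the countably many endpoints, hence eventually sits in the same cell for both systems, forcing $d(u_n(\omega),u(\omega))\le\operatorname{diam}A_{k,j}$) are all correct, as is your closing remark that under the mere tightness hypothesis of the stated theorem the representation can only be asserted along the Prokhorov subsequence — which is exactly how the result is invoked in the body of the paper.

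There is one genuine flaw as written: a \emph{finite} partition $\{A_{k,j}\}_{j=1}^{N_k}$ of all of $X$ with every cell of diameter less than $2^{-k}$ forces $X$ to be totally bounded (hence, being complete, compact); for $X=\mathbb{R}$, say, no finite partition into sets of diameter less than $1$ exists. So your step (i) is impossible for a general Polish space. The standard repair is either to allow \emph{countably} many cells per level (cover $X$ by countably many balls $B(q_i,r_i)$ with $q_i$ from a countable dense set and radii chosen so that $\nu(\partial B(q_i,r_i))=0$, then disjointify), or to keep finitely many small cells plus one remainder cell of $\nu$-measure at most $2^{-k}$ and dispose of the remainder intervals by Borel--Cantelli. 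With countably many cells your argument survives verbatim: each interval endpoint is a finite sum $\sum_{i<j}\nu_n(A_{k,i})$ and still converges by Portmanteau, and the endpoint set remains countable. A second, more minor point you should make explicit is why the limit $u_n=\lim_k u_n^{(k)}$ has law exactly $\nu_n$: one only gets directly that $\mathbb{P}(u_n\in\overline{A_{k,j}})\ge\nu_n(A_{k,j})$ (the cells are $\nu$-continuity sets, not $\nu_n$-continuity sets), and one must then pass to $\mathbb{P}(u_n\in G)\ge\nu_n(G)$ for all open $G$ and invoke regularity to conclude equality of the two measures. Both issues are repairable without changing your strategy.
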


\section*{Acknowledgments}
Z. Qiu's research was supported by the CSC under grant No.201806160015.
Y. Wang is partially supported by the NSF grant   DMS-1907519.

\bigskip


\begin{thebibliography}{aa}

\bibitem{Ball}
J.M. Ball, A. Majumdar. Nematic liquid crystals: from Maier-Saupe to a continuum theory. Mol. Cryst. Liq. Cryst., 2010, 525(1), 1-11.

\bibitem{Ball1}
J.M. Ball, A. Zarnescu. Orientability and energy minimization in liquid crystal models. Arch. Ration. Mech. Anal., 2011, 202(2), 493-535.

\bibitem{BAN}
A.N. Beris, B.J. Edwards. Thermodynamics of flowing systems: with internal microstructure. Oxford University Press on Demand, 1994.

\bibitem{breit}
D. Breit, E. Feireisl, M. Hofmanov\'{a}. Local strong solutions to the stochastic compressible Navier-Stokes system. Comm. Partial Differential Equations, 2018, 43(2), 313-345.

\bibitem{Hofmanova}
D. Breit, M. Hofmanova. Stochastic Navier-Stokes equations for compressible fluids. Idiana Univ. Math. J., 2014, 65, 1183-1250.

\bibitem{brze1}
Z. Brze\'{z}niak, E. Hausenblas, P. Razafimandimby. Strong solution to stochastic penalised nematic liquid crystals model driven by multiplicative Gaussian noise. arXiv preprint arXiv:2004.00590, 2020.


\bibitem{brze2}
Z. Brze\'{z}niak, E. Hausenblas, P. Razafimandimby. Some results on the penalised nematic liquid crystals driven by multiplicative noise: weak solution and maximum principle. Stoc. PDE: Anal. Comp., 2019, 7(3), 417-475.

\bibitem{brze3}
Z. Brze\'{z}niak, U. Manna, A. A. Panda. Martingale solutions of nematic liquid crystals driven by pure jump noise in the Marcus canonical form, J. Differential Equations, 2019, 266(10), 6204-6283.

\bibitem{Seidler}
Z. Brze\'{z}niak, B. Maslowski, J. Seidler. Stochastic nonlinear beam equations. Probab. Theory Related Fields, 2005, 132(1), 119-149.

\bibitem{ZM}
Z. Brze\'{z}niak, M. Ondrej\'{a}t. Strong solutions to stochastic wave equations with values in Riemannian manifolds. J. Funct. Anal., 2007, 253(2), 449-481.


\bibitem{chen}
G.-Q. Chen, A. Majumdar, D. Wang, R. Zhang. Global weak solutions for the compressible active liquid crystal system.  SIAM J. Math. Anal., 2018, 50(4), 3632-3675.



\bibitem{CG}
G.-Q. Chen, A. Majumdar, D. Wang, R. Zhang. Global existence and regularity of solutions for active liquid crystals. J. Differential Equations, 2017, 263(1), 202-239.

\bibitem{DAF}
F. De Anna. A global 2D well-posedness result on the order tensor liquid crystal theory. J. Differential Equations, 2017, 262(7), 3932-3979.

\bibitem{Prato}
G. Da Prato, J. Zabczyk. Stochastic Equations in Infinite Dimensions. Cambridge University Press, 1992.

\bibitem{DGPG}
P. G. De Gennes, J. Prost. The physics of liquid crystals. Oxford university press, 1993.

\bibitem{Maslow}
E. Feireisl, B. Maslowski, A. Novotn\'{y}. Compressible fluid flows driven by stochastic forcing. J. Differential Equations, 2013, 254(3), 1342-1358.

\bibitem{FENP}
E. Feireisl, A. Novotn\'{y}, H. Petzeltov\'{a}. On the existence of globally defined weak solutions to the Navier-Stokes equations. J. Math. Fluid Mech., 2001, 3(4), 358-392.

\bibitem{FERS}
E. Feireisl, E. Rocca, G. Schimperna, et al. Evolution of non-isothermal Landau-de Gennes nematic liquid crystals flows with singular potential. Commun. Math. Sci., 2014, 12(2), 317–343.


\bibitem{Glatt-Holtz} N.E. Glatt-Holtz, V.C. Vicol. Local and global existence of smooth solutions for the stochastic Euler equations with multiplicative noise. Ann. Probab., 2014, 42(1), 80-145.

\bibitem{Glatt-Holtz1}
N.E. Glatt-Holtz, M. Ziane. Strong pathwise solutions of the stochastic Navier-Stokes system. Adv. Diff. Eq., 2009, 14(5/6), 567-600.

\bibitem{Krylov} I. Gy\"{o}ngy, N. Krylov. Existence of strong solutions for It\^{o}’s stochastic equations via approximations. Probab. Theory Related Fields, 1996, 105(2), 143-158.

\bibitem{HD}
 D. Hoff. Global existence for 1D, compressible, isentropic Navier-Stokes equations with large initial data. Trans. Amer. Math. Soc., 1987, 303(1), 169-181.

\bibitem{HXLJ}
X. Huang, J. Li, Z. Xin. Global well‐posedness of classical solutions with large oscillations and vacuum to the three‐dimensional isentropic compressible Navier-Stokes equations. Commun.  Pur. Appl. Math., 2012, 65(4), 549-585.

\bibitem{JFJS}
F. Jiang, S. Jiang, D. Wang. Global weak solutions to the equations of compressible flow of nematic liquid crystals in two dimensions.  Arch. Ration. Mech. Anal., 2014, 214(2), 403-451.

\bibitem{JFJSW}
F. Jiang, S. Jiang, D. Wang. On multi-dimensional compressible flows of nematic liquid crystals with large initial energy in a bounded domain. J. Funct. Anal., 2013, 265(12), 3369-3397.

\bibitem{JS}
S. Jiang, Z. Xin, P. Zhang. Global weak solutions to 1D compressible isentropic Navier-Stokes equations with density-dependent viscosity. Methods Appl. Anal., 2005, 12(3), 239-252.

\bibitem{kall}
O. Kallenberg. Foundations of modern probability in Probabolity and Its Application. Springer-Verlag, New York, 1997.

\bibitem{Kim1}
J.U. Kim. Existence of a local smooth solution in probability to the stochastic Euler equations in $\mathbb{R}^{3}$. J. Funct. Anal., 2009, 256(11), 3660-3687.

\bibitem{Kato2}
T. Kato, G. Ponce. Commutator estimates and the Euler and Navier-Stokes equations. Commun.  Pur. Appl. Math., 1988, 41(7), 891-907.

\bibitem{KR}
N.V. Krylov, B.L. Rozovskii. Stochastic Evolution Equations, Current Problems in Mathematics, Vol. 14. Moscow: Akad. Nauk SSSR, Vsesoyuz. Inst. Nauchn. i Tekhn. Informatsii, 1979, 256, 71–146(in Russian).


\bibitem{LHL}
H.L. Li, J. Li, Z. Xin. Vanishing of vacuum states and blow-up phenomena of the compressible Navier-Stokes equations. Comm. Math. Phys., 2008, 281(2), 401.

\bibitem{LP1}
P.L. Lions. Quelques Methodes de Resolution des Problemes aux Limites non Lineaires. Dunod, Paris, 1969.

\bibitem{LP}
P.L. Lions. Mathematical Topics in Fluid Mechanics: Volume 2: Compressible Models. Oxford University Press on Demand, 1996.

\bibitem{LXWD}
X. Li, D. Wang. Global strong solution to the density-dependent incompressible flow of liquid crystals. Trans. Amer. Math. Soc., 2015, 367(4), 2301-2338.

\bibitem{MajdaA}
A. Majda. Compressible fluid flow and systems of conservation laws in several space variables. Springer Science $\&$ Business Media, 2012

\bibitem{Maju}
A. Majumdar. Equilibrium order parameters of liquid crystals in the Landau-De Gennes theory. European J. Appl. Math., 2010, 21, 181--203.

\bibitem{MAYS}
A. Matsumura, S. Yanagi. Uniform boundedness of the solutions for a one-dimensional isentropic model system of compressible viscous gas. Comm. Math. Phys., 1996, 175(2), 259-274.

\bibitem{MAVA}
A. Mellet, A. Vasseur. Existence and uniqueness of global strong solutions for one-dimensional compressible Navier-Stokes equations. SIAM J. Math. Anal., 2008, 39(4), 1344-1365.

\bibitem{Rozovskii}
R. Mikulevicius, B.L. Rozovskii. Stochastic Navier-Stokes equations for turbulent flows. SIAM J. Math. Anal., 2004, 35(5), 1250-1310.

\bibitem{PMZAG}
M. Paicu, A. Zarnescu. Global existence and regularity for the full coupled Navier-Stokes and Q-tensor system. SIAM J. Math. Anal., 2011, 43(5), 2009-2049.

\bibitem{PMZAE}
M. Paicu, A. Zarnescu. Energy dissipation and regularity for a coupled Navier-Stokes and Q-tensor system.  Arch. Ration. Mech. Anal., 2012, 203(1), 45-67.

\bibitem{QW}
Z. Qiu, Y. Wang. Martingale Solution for Stochastic Active Liquid Crystal System. To appear in Discrete Contin. Dyn. Syst., 2020.

\bibitem{WMS}
M. Wilkinson. Strictly physical global weak solutions of a Navier-Stokes Q-tensor system with singular potential.  Arch. Ration. Mech. Anal., 2015, 218(1), 487-526.

\bibitem{Sko}
A.V. Skorokhod. Limit theorems for stochastic processes. Theory Probab. Appl., 1956, 1(3), 261-290.


\bibitem{16}
S.A. Smith. Random perturbations of viscous, compressible fluids: global existence of weak solutions. SIAM J. Math. Anal., 2017, 49(6), 4521-4578.

\bibitem{17}
S.A. Smith, K. Trivisa. The stochastic Navier-Stokes equations for heat-conducting, compressible fluids: global existence of weak solutions. J. Evolution Equations, 2018, 18(2), 411-465.

\bibitem{DWang}
D. Wang, H. Wang. Global existence of martingale solutions to the three-dimensional stochastic compressible Navier-Stokes equations. Differential Integral Equations, 2015, 28(11/12), 1105-1154.

\bibitem{WD}
D. Wang, X. Xu, C. Yu. Global weak solution for a coupled compressible Navier-Stokes and Q-tensor system. Commun. Math. Sci., 2015, 13(1), 49–82.

\bibitem{WDYC}
D. Wang, C. Yu. Global weak solution and large-time behavior for the compressible flow of liquid crystals.  Arch. Ration. Mech. Anal., 2012, 204(3), 881-915.

\bibitem{wu}
L.D. Wang, J.L. Wu, G.L. Zhou. Global well-posedness of stochastic nematic liquid crystals with random initial and random boundary conditions driven by multiplicative noise.  arXiv preprint arXiv:1906.11472, 2019.

\end{thebibliography}
\end{document}